\documentclass[onecolumn]{IEEEtran}

\IEEEoverridecommandlockouts                              
\overrideIEEEmargins

\usepackage{graphicx}
\usepackage{amsmath}
\usepackage{amssymb}
\usepackage[font=footnotesize]{caption} 
\usepackage{subfig} 
\usepackage{cite} 
\usepackage{color}
\usepackage{algorithm} 
\usepackage{algpseudocode} 
\usepackage{enumerate}
\usepackage{hyperref}

\usepackage{setspace}
\setstretch{1.5}

\usepackage{tikz}
\usetikzlibrary{calc} 

\newtheorem{theorem}{Theorem}[section]
\newtheorem{lemma}[theorem]{Lemma}
\newtheorem{proposition}[theorem]{Proposition}

\newtheorem{example}[theorem]{Example}
\newtheorem{definition}[theorem]{Definition}
\newtheorem{remark}[theorem]{Remark}

\newtheorem{problem}[theorem]{Problem}

\graphicspath{{figures/}}

\newcommand{\tr}{\mathrm{tr\,}}

\begin{document}

\title{\LARGE \bf
Optimal Sensor Placement for Target Localization and Tracking\\ in 2D and 3D
}

\author{Shiyu Zhao,
        Ben M. Chen
        and Tong H. Lee 
\thanks{S. Zhao, B. M. Chen and T. H. Lee are with the Department of Electrical and Computer Engineering, National University of Singapore, Singapore 117576, Singapore
    {\tt\small \{shiyuzhao, bmchen, eleleeth\}@nus.edu.sg}}
}

\maketitle

\begin{abstract}
This paper analytically characterizes optimal sensor placements for target localization and tracking in 2D and 3D.
Three types of sensors are considered: bearing-only, range-only, and received-signal-strength.
The optimal placement problems of the three sensor types are formulated as an identical parameter optimization problem and consequently analyzed in a unified framework.
Recently developed frame theory is applied to the optimality analysis.
We prove necessary and sufficient conditions for optimal placements in 2D and 3D.
A number of important analytical properties of optimal placements are further explored.
In order to verify the analytical analysis, we present a gradient control law that can numerically construct generic optimal placements.
\end{abstract}

\begin{IEEEkeywords}
Fisher information matrix; gradient control; optimal sensor placement; target tracking; tight frame.
\end{IEEEkeywords}

\IEEEpeerreviewmaketitle

\section{Introduction}
Target localization and tracking using networked mobile sensor platforms has become an active research area in recent years.
When localizing a target from noisy measurements of multiple sensors, the sensor placement can significantly affect the estimation accuracy of any localization algorithms.
The term sensor placement as used here refers to the relative sensor-target geometry.
This paper will address the \emph{optimal} sensor placement that can minimize the target localization uncertainty.

In the literature, there are generally two kinds of formulations for optimal sensor placement problems.
One is optimal control \cite{Jarurat07,Andrew08,Oshman1999AES,Millerbook} and the other is parameter optimization \cite{bishop10,bishop07part2,Bishop09RSS,dogancy08,Sonia06,Zhang95,dogancay07AES,Jason09,Moreno2011}.

The optimal control formulation is usually adopted for cooperative path planning \cite{Jarurat07,Andrew08,Oshman1999AES}, where the aim is to estimate the target position on one hand and plan the path of sensor platforms to minimize the estimation uncertainty on the other hand.
This problem is also referred as simultaneous-localization-and-planning (SLAP) \cite{Andrew08}.
In a SLAP problem, target motion and sensor measurement models are considered as process and measurement models, respectively.
The Kalman filter usually is applied to estimate the target position and to characterize the estimation covariance.
In order to minimize the estimation covariance, an optimal control problem will be formulated.
The disadvantage of this kind of formulation is that the optimal control with various constraints generally can only be solved by numerical methods. Analytical properties usually cannot be obtained.

To avoid complicated optimal control problems, many studies including our work in this paper formulate the optimal placement problem as a parameter optimization problem.
This kind of formulation has a long history and has been investigated extensively in \cite{bishop10,bishop07part2,Bishop09RSS,dogancy08,Sonia06,Zhang95,dogancay07AES,Jason09,Moreno2011}, to name a few.
The parameter optimization formulation is based on the assumption that a rough estimation of the target position has already been obtained in other ways.
By using this rough estimation, the sensor positions are the parameters to be optimized such that the consequent target localization based on the optimized placement will be more accurate.
The objective function in the parameter optimization formulation usually involves the Fisher information matrix (FIM).
The FIM is the inverse of the Cramer Rao lower bound (CRLB), which is the minimum achievable estimation covariance.
An unbiased estimator that achieves the CRLB is called efficient.
An optimal placement, which maximizes a function (such as the determinant) of the FIM, can be interpreted as maximizing the target information gathered by the sensors or minimizing the estimation covariance of any efficient estimators.

In contrast to the optimal control formulation, the parameter optimization formulation can be solved analytically.
The analytical solutions are important for us to get insight to the effect of sensor placement on target localization uncertainty.
It is notable that the numerical results based on the optimal control formulation generally are consistent with the analytical results based on the parameter optimization formulation.
For example, the numerical simulations in \cite{Jarurat07,Andrew08} show that the final optimal angle subtended by two sensors at the target is 90 degrees.
In \cite{bishop10,dogancy08,Sonia06}, it is also analytically proved that the placement of two sensors is optimal if the angle subtended by the two sensors at the target is 90 degrees.

In this paper, we will investigate optimal sensor placement by adopting the parameter optimization formulation.
Our aim is to determine the optimal sensor-target geometry based on an initial estimation of the target position.
Optimal sensor placement is of not only theoretical interest but also significantly practical value.
Many studies have shown that target tracking performance can be improved when sensors are steered to form an optimal placement.
In this paper, we only focus on determining optimal placements and will not address target tracking.
One may refer to \cite{Sonia06} for a comprehensive example that illustrates the application optimal sensor placements to cooperative target tracking.

Until now, most of the existing studies only consider 2D optimal sensor placements
\cite{bishop10,bishop07part2,Bishop09RSS,dogancy08,Sonia06,Zhang95,Jason09}    .
Very few works in the literature have tackled 3D cases \cite{Moreno2011}.
Analytical characterization of generic optimal sensor placements in 3D is still an open problem.
In this paper we will extend the results in \cite{bishop10,dogancy08,Sonia06,Bishop09RSS} from 2D to 3D.
The extension will be non-trivial.
Maximizing the determinant of the FIM has been widely adopted as the criterion for optimal placements in 2D.
This criterion, however, cannot be directly applied to 3D cases because the determinant of the FIM is hardly analytically tractable in 3D.
Motivated by this, we will propose a new criterion for optimal placement, which enables us to analytically characterize optimal placements in 2D and 3D.
The existing analysis of 2D cases can be regarded as a special case of our general analysis for both 2D and 3D cases.

The existing work on optimal sensor placement has addressed many sensor types including bearing-only \cite{bishop10,dogancy08,zhao2012}, range-only \cite{Sonia06,bishop10,MITPaper}, received-signal-strength (RSS) \cite{Bishop09RSS}, time-of-arrival (TOA) \cite{bishop10,bishop07part2}, and time-difference-of-arrival (TDOA) \cite{bishop10,Jason09}.
These sensor types are analyzed individually in the literature.
One of the contributions of this paper is to unify the analyses of bearing-only, range-only, and RSS sensors.
Based on our proposed optimality criterion, we will show the objective functions for the three sensor types are exactly the same.
Hence their optimal placement can be analyzed in a unified way.
Since the measurement models and FIMs of TOA and TDOA sensors are significantly different from those of the three, we will not consider TOA and TDOA sensors in this paper.

By employing recently developed frame theory, we prove necessary and sufficient conditions for optimal placements in 2D and 3D.
Frames provide a redundant and robust way for representing signals and are widely used in signal processing.
We refer to \cite{Jelena07,Jelena07partII} for an introduction to frames.
It might be interesting to ask why frames arise in optimal sensor placement problems.
This question can be loosely answered from the redundancy point of view.
As mentioned in \cite{Jelena07}, one would use frames when redundancy is a must.
In our work, the redundancy can be interpreted as the ratio between the number of sensors and the space dimension.
When the sensor number equals the dimension, there is no redundancy in the system, then we will show that the necessary and sufficient condition of optimal placement can be proved without using frames.
But when the sensor number is larger than the space dimension, our optimality analysis will heavily rely on frame theory.

The paper is organized as follows.
Section \ref{section_frameTheory} introduces preliminaries to frame theory.
Section \ref{section_problemFormulation} presents a unified formulation for optimal placement problems of bearing-only, range-only, and RSS sensors in 2D and 3D.
In Section \ref{section_optimalityanalysis}, necessary and sufficient conditions for optimal placement in 2D and 3D are proved.
Section \ref{section_analyticalproperties} further explores a number of important properties of optimal placements.
In Section~\ref{section_control}, a gradient control law is proposed to numerically verify our analytical analysis.
Conclusions are drawn in Section \ref{section_conclusion}.

\section{Preliminaries to Frame Theory}\label{section_frameTheory}
Frames can be defined in any Hilbert space.
Here we are only interested in $d$-dimensional Euclidean space $\mathbb{R}^d$.
Let $\|\cdot\|$ be the Euclidean norm of a vector or the Frobenius norm of a matrix.
As shown in \cite{Benedetto03,Casazza06,Jelena07,Jelena07partII}, a set of vectors $\{\varphi_i\}_{i=1}^n$ in $\mathbb{R}^d$ ($n\ge d$) is called a frame if there exist constants $0<a\le b<+\infty$ so that for all $x\in\mathbb{R}^d$
\begin{align}\label{eq_frameDefinition}
    a\|x\|^2 \le \sum_{i=1}^n \langle x, \varphi_i\rangle^2 \le b\|x\|^2,
\end{align}
where $\langle \cdot,\cdot \rangle$ denotes the inner product of two vectors.
The constants $a$ and $b$ are called the \emph{frame bounds}.
A frame $\{\varphi_i\}_{i=1}^n$ is called \emph{unit-norm} if $\|\varphi_i\|=1$ for all $i\in\{1,\dots,n\}$.
Denote $\Phi=[\varphi_1,\dots,\varphi_n]\in\mathbb{R}^{d\times n}$.
Because $\langle x, \varphi_i\rangle ^2 =(x^T\varphi_i)^2 = x^T\varphi_i\varphi_i^Tx$,
inequality $\eqref{eq_frameDefinition}$ can be rewritten as
\begin{align*}
    a\|x\|^2\le x^T \Phi\Phi^T x\le b\|x\|^2,
\end{align*}
where the matrix $\Phi\Phi^T = \sum_{i=1}^n \varphi_i\varphi_i^T$ is called the \emph{frame operator}.
The frame bounds $a$ and $b$ obviously are the smallest and largest eigenvalues of $\Phi\Phi^T$, respectively.
Since $a>0$, $\Phi\Phi^T$ is positive definite.
It is well known that $d$ vectors in $\mathbb{R}^d$ form a basis if the vectors span $\mathbb{R}^d$.
Frame essentially is a generalization of the concept of basis.
The frame $\{\varphi_i\}_{i=1}^n$ can also span $\mathbb{R}^d$ because $\Phi\Phi^T$ is positive definite and hence $\Phi$ is of full row rank.
But compared to a basis, a frame have $n-d$ redundant vectors.
The constant $n/d$ is referred as the \emph{redundancy} of the system. When $n/d=1$, the frame would degenerate to a basis of $\mathbb{R}^d$.

Tight frame is a very important concept in frame theory.
A frame is called \emph{tight} when $a=b$.
From \eqref{eq_frameDefinition}, it is easy to see the frame $\{\varphi_i\}_{i=1}^n$ is tight when
\begin{align}\label{eq_tightframe}
    \sum_{i=1}^n \varphi_i\varphi_i^T = a I_d.
\end{align}
Taking trace on both sides of \eqref{eq_tightframe} yields $a=\sum_{i=1}^n \|\varphi_i\|^2/d$.
It is an important and fundamental problem in frame theory to construct a tight frame $\{\varphi_i\}_{i=1}^n$ that solves \eqref{eq_tightframe} with specified norms.
This problem is also recognized as notoriously difficult \cite{Casazza2012}.
One approach to this problem is to characterize tight frames as the minimizers of the \emph{frame potential}
\begin{align}\label{eq_framePotential}
    \mathrm{FP}\left(\{\varphi_i\}_{i=1}^n\right)=\sum_{i=1}^n\sum_{j=1}^n \left(\varphi_i^T\varphi_j\right)^2.
\end{align}
Frame potential is first proposed in \cite{Benedetto03} for unit-norm frames, and then generalized in \cite{Casazza06} for frames with arbitrary norms.

The following concept \emph{irregularity} \cite{Casazza06,Jelena07} is crucial for characterizing the minimizers of the frame potential.

\begin{definition}[Irregularity]\label{definition_irregularity}
    For any positive non-increasing sequence $\{c_i\}_{i=1}^n$ with $c_1\ge \dots \ge c_n>0$, and any integer $d$ satisfying $1\le d \le n$, denote $k_0$ as the smallest nonnegative integer $k$ for which
    \begin{align}
         \label{eq_regularPart} c_{k+1}^2 \le \frac{1}{d-k}\sum_{i=k+1}^n c_i^2.
    \end{align}
    The integer $k_0$ is called the irregularity of $\{c_i\}_{i=1}^n$ with respect to $d$.
\end{definition}

\begin{remark}
    The irregularity of a sequence is evaluated with respect to a particular positive integer.
    The irregularity of a sequence may be different when evaluated with respect to different positive integers.
    In this paper, we will omit mentioning this integer when the context is clear.
\end{remark}

Because the index $k=d-1$ always makes \eqref{eq_regularPart} hold, the irregularity $k_0$ always exists and satisfies
\begin{align*}
0\le k_0 \le d-1.
\end{align*}
When $k_0=0$, inequality \eqref{eq_regularPart} degenerates to the \emph{fundamental inequality} \cite{Casazza06}
\begin{align}\label{eq_fundamentalInequality}
        \underset{j=1,\dots,n}{\max} c_j^2\le \frac{1}{d}\sum_{i=1}^n c_i^2.
\end{align}
In this paper we call the sequence $\{c_i\}_{i=1}^n$ \emph{regular} when $k_0=0$,
and \emph{irregular} when $1\le k_0 \le d-1$.
The fundamental inequality \eqref{eq_fundamentalInequality} intuitively means that: a sequence is regular when no element is much larger than the others. Next we show several examples to illustrate the concept of irregularity.

\begin{example}\label{example_irregularity_equalcase}
Consider a sequence $\{c_i\}_{i=1}^n$ with $c=c_1=\dots=c_n$ and any $d\le n$. Because ${1}/{d}\sum_{i=1}^n c_i^2=nc^2/d\ge c^2$, the fundamental inequality \eqref{eq_fundamentalInequality} holds.
Thus this sequence is regular with respect to any integer $d\le n$.
\end{example}

\begin{example}
Consider a sequence $\{c_i\}_{i=1}^4=\{10,1,1,1\}$ and $d=3$.
Note the feature of this sequence is that one element is much larger than the others.
Because $10^2>1/3(10^2+1+1+1)$, the sequence is irregular with respect to $d=3$.
In order to determine the irregularity $k_0$, we need to further check if $\{c_i\}_{i=2}^4=\{1,1,1\}$ is regular with respect to $d-1=2$.
Since the elements of $\{c_i\}_{i=2}^4$ equal to each other, $\{c_i\}_{i=2}^4$ is regular with respect to $2$ as shown in Example \ref{example_irregularity_equalcase}.
Hence the irregularity of $\{c_i\}_{i=1}^4$ with respect to $d=3$ is $k_0=1$.
This example shows that a sequence would be irregular if certain element is much larger than the others.
\end{example}

\begin{example}
Consider a sequence $\{c_i\}_{i=1}^4=\{10,10,1,1\}$ and $d=2$ or $3$.
When $d=2$, we have $10^2<1/2(10^2+10^2+1+1)$.
Hence $\{c_i\}_{i=1}^4$ is regular with respect to $d=2$.
When $d=3$, we have $10^2>1/3(10^2+10^2+1+1)$, $10^2>1/2(10^2+1+1)$ and $1<1/1(1+1)$.
Hence $\{c_i\}_{i=1}^4$ is irregular with respect to $d=3$ and the irregularity is $k_0=2$.
This example shows that a sequence might be regular with respect to one integer but irregular with respect to another.
\end{example}

The minimizers of the frame potential in \eqref{eq_framePotential} are characterized by the following lemma \cite{Casazza06}, which will be used to prove the necessary and sufficient conditions for optimal placement.
\begin{lemma}
    \label{lemma_irregularframes}
    In $\mathbb{R}^d$, given a positive non-increasing sequence $\{c_i\}_{i=1}^n$ with irregularity as $k_0$, if the norms of the frame $\{\varphi_i\}_{i=1}^n$ are specified as $\|\varphi_i\|=c_i$ for all $i\in\{1,\dots,n\}$, any minimizer of the frame potential in \eqref{eq_framePotential} is of the form
    \begin{align*}
        \{\varphi_i\}_{i=1}^n=\{\varphi_i\}_{i=1}^{k_0} \cup \{\varphi_i\}_{i=k_0+1}^n,
    \end{align*}
    where $\{\varphi_i\}_{i=1}^{k_0}$ is an orthogonal set, and $\{\varphi_i\}_{i=k_0+1}^n$ is a tight frame in the orthogonal complement of the span of $\{\varphi_i\}_{i=1}^{k_0}$. Any local minimizer is also a global minimizer.
\end{lemma}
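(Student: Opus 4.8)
The plan is to pass from the geometric frame potential to a purely spectral quantity, and then to reduce the problem to a finite-dimensional majorization problem whose solution is dictated by the irregularity $k_0$.

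First I would rewrite the frame potential in terms of the frame operator $S=\Phi\Phi^T$. Using $(\varphi_i^T\varphi_j)^2=\tr(\varphi_i\varphi_i^T\varphi_j\varphi_j^T)$ and summing over $i,j$ gives $\mathrm{FP}(\{\varphi_i\}_{i=1}^n)=\tr(S^2)=\sum_{k=1}^d\lambda_k^2$, where $\lambda_1\ge\dots\ge\lambda_d\ge0$ are the eigenvalues of $S$. The prescribed norms fix the trace, since $\tr S=\sum_i\|\varphi_i\|^2=\sum_i c_i^2$ is independent of the configuration. Thus minimizing the frame potential is the same as minimizing $\sum_k\lambda_k^2$ over all eigenvalue sequences that are actually realizable by a frame with the given norms.

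The key observation is that the Gram matrix $G=\Phi^T\Phi\in\mathbb{R}^{n\times n}$ has diagonal entries $c_1^2,\dots,c_n^2$ and nonzero eigenvalues $\lambda_1,\dots,\lambda_d$ (padded by $n-d$ zeros). By the Schur--Horn theorem, the realizable spectra are exactly those for which the padded vector $(\lambda_1,\dots,\lambda_d,0,\dots,0)$ majorizes $(c_1^2,\dots,c_n^2)$. Since $\sum_k\lambda_k^2$ is a Schur-convex function of $\lambda$, I would minimize it over this majorization-constrained set by pushing $\lambda$ as low as possible in the majorization order while keeping at most $d$ nonzero entries. I expect the minimizing spectrum to be forced to flatten its tail: the largest entries must stay pinned at $\lambda_i=c_i^2$ as long as the fundamental inequality fails on the remaining tail, after which the remaining eigenvalues collapse to the common value $\frac{1}{d-k_0}\sum_{i>k_0}c_i^2$. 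The point where flattening becomes possible is exactly the condition in Definition~\ref{definition_irregularity}, so the optimal spectrum is $\lambda_i=c_i^2$ for $i\le k_0$ and $\lambda_i=\frac{1}{d-k_0}\sum_{j>k_0}c_j^2$ for $k_0<i\le d$. Establishing that this spectrum is realizable (and hence the minimum is attained) reduces to the known existence of a tight frame with prescribed norms whenever the fundamental inequality holds, applied to the regular tail $\{c_i\}_{i=k_0+1}^n$ in dimension $d-k_0$. Translating this optimal spectrum back into geometry then gives the claimed decomposition: because each top eigenvalue $\lambda_i=c_i^2$ ($i\le k_0$) coincides with the squared norm of a single vector, the Schur--Horn equality conditions force $\varphi_i$ to be an eigenvector of $S$ orthogonal to all other vectors, yielding the orthogonal set; the flat block means $S$ restricted to the orthogonal complement of their span is a scalar multiple of the identity, i.e. $\{\varphi_i\}_{i=k_0+1}^n$ is a tight frame there.

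The hard part will be the final assertion that every local minimizer is a global minimizer. The majorization argument only controls the global minimum value; it does not by itself rule out spurious local minima on the compact product of spheres $\prod_i\{\|\varphi_i\|=c_i\}$. For this I would compute the first- and second-order optimality conditions of the frame potential on this manifold. A first-order analysis should show that at any critical configuration each $\varphi_i$ is an eigenvector of the frame operator $S$, so the vectors partition into the eigenspaces of $S$. I would then argue that whenever this eigenspace partition does not match the optimal $k_0$-pattern above, one can rotate a suitable pair of vectors lying in different eigenspaces so as to strictly decrease $\sum_k\lambda_k^2$, exhibiting a descent direction and proving the critical point is a saddle rather than a local minimum. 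Carrying out this perturbation bookkeeping, and handling the non-uniqueness inherent in tight-frame configurations, is where the real work lies; it is cleanest to organize it as an induction on $d$ that peels off $\varphi_1$ whenever the fundamental inequality fails, thereby decrementing both $d$ and $k_0$ and invoking the regular case $k_0=0$ as the base.
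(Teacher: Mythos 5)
First, a point of reference: the paper itself gives \emph{no} proof of Lemma~\ref{lemma_irregularframes}; it is imported verbatim from \cite{Casazza06}, so your attempt is competing against the literature proof rather than anything in the paper. For \emph{global} minimizers, your route is sound and genuinely different from that literature proof. Passing to the Gram matrix, invoking Schur--Horn to identify the realizable spectra with a majorization constraint, minimizing the strictly Schur-convex function $\sum_k\lambda_k^2$ by exhibiting the majorization-minimal feasible spectrum (head pinned at $\lambda_i=c_i^2$ for $i\le k_0$, flat tail at $\frac{1}{d-k_0}\sum_{i>k_0}c_i^2$), and then using the equality conditions in the Schur/Ky Fan inequalities to force the Gram matrix to decouple into a diagonal head block and a flat-spectrum tail block --- all of this can be made rigorous, and it even yields Lemma~\ref{lemma_fundamentalInequality} as a by-product (the flat padded spectrum majorizes $(c_1^2,\dots,c_n^2)$ exactly when \eqref{eq_fundamentalInequality} holds), so your appeal to the existence of tight frames with prescribed norms introduces no circularity. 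The proof in \cite{Casazza06} instead works variationally on the product of spheres; your spectral argument is arguably cleaner for the global statement.

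The genuine gap is the lemma's last sentence, which is also its hardest and most load-bearing claim: every \emph{local} minimizer is global, equivalently every local minimizer already has the orthogonal-set-plus-tight-frame structure. Your majorization argument says nothing about local minimizers, as you concede, and the descent argument you sketch is not correct as stated. At a critical point every $\varphi_i$ is indeed an eigenvector of $S$; but rotating a vector $\varphi_i$ out of the eigenspace of $\lambda$ toward a unit eigenvector $v$ of a smaller eigenvalue $\mu$, say $\varphi_i(t)=\cos t\,\varphi_i+c_i\sin t\,v$, changes the frame potential by exactly
\begin{align*}
\mathrm{FP}(t)-\mathrm{FP}(0)=2c_i^2\sin^2 t\,\bigl(c_i^2-(\lambda-\mu)\bigr),
\end{align*}
so this move is a descent direction if and only if $c_i^2<\lambda-\mu$. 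Hence critical points whose eigenspace partition violates the $k_0$-pattern but for which $c_i^2\ge\lambda-\mu$ holds for every vector and every eigenvalue pair cannot be dismissed by your rotation alone. Ruling such configurations out as local minima --- by combining these second-order inequalities with the trace identity satisfied on each eigenspace (the vectors sharing eigenvalue $\lambda$ form a tight frame for that eigenspace, so their squared norms sum to $\lambda$ times its dimension) and with the definition of $k_0$ in Definition~\ref{definition_irregularity} --- is precisely the technical core of the proof in \cite{Casazza06}, and it is absent from your proposal. Until that bookkeeping, or your proposed induction on $d$, is actually carried out, the ``local implies global'' assertion, and hence the lemma as stated, remains unproven.
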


From Lemma \ref{lemma_irregularframes}, a minimizer of the frame potential consists of an orthogonal set $\{\varphi_i\}_{i=1}^{k_0}$ and a tight frame $\{\varphi_i\}_{i=k_0+1}^n$. The partition of the two sets is determined by the irregularity of the specified frame norms $\{c_i\}_{i=1}^n$.
When the irregularity $k_0=0$, it is clear that a minimizer of the frame potential is a tight frame.
As a corollary of Lemma \ref{lemma_irregularframes}, the following result \cite{Casazza06} gives the existence condition of the solutions to \eqref{eq_tightframe}.

\begin{lemma}
    \label{lemma_fundamentalInequality}
    In $\mathbb{R}^d$, given a positive sequence $\{c_i\}_{i=1}^n$, there exists a tight frame $\{\varphi_i\}_{i=1}^n$ with $\|\varphi_i\|=c_i$ for all $i\in\{1,\dots,n\}$ solving \eqref{eq_tightframe} if and only if $\{c_i\}_{i=1}^n$ is regular.
\end{lemma}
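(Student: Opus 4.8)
The plan is to treat the two implications separately, connecting the tight-frame condition \eqref{eq_tightframe} to the frame potential \eqref{eq_framePotential} and then invoking Lemma \ref{lemma_irregularframes}. The identity I would establish first is
\[
\mathrm{FP}\left(\{\varphi_i\}_{i=1}^n\right)=\tr\big((\Phi\Phi^T)^2\big)=\sum_{\ell=1}^d \lambda_\ell^2,
\]
where $\lambda_1,\dots,\lambda_d$ are the eigenvalues of the frame operator $S:=\Phi\Phi^T=\sum_{i=1}^n\varphi_i\varphi_i^T$. Since $\tr(S)=\sum_{i=1}^n\|\varphi_i\|^2=\sum_{i=1}^n c_i^2$ is fixed by the prescribed norms, the Cauchy--Schwarz inequality gives $\sum_\ell\lambda_\ell^2\ge\big(\sum_\ell\lambda_\ell\big)^2/d=\big(\sum_i c_i^2\big)^2/d$, with equality exactly when all $\lambda_\ell$ coincide, i.e. when $S=aI_d$ for the symmetric matrix $S$. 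Hence a configuration with the prescribed norms solves \eqref{eq_tightframe} if and only if it attains this absolute lower bound on the frame potential.

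For the necessity direction ($\Rightarrow$), I would argue directly and elementarily. Assuming a tight frame with $\|\varphi_i\|=c_i$ exists, evaluate the tight-frame identity $\sum_{i=1}^n\langle x,\varphi_i\rangle^2=a$ at the unit vector $x=\varphi_j/c_j$: the $i=j$ term equals $c_j^2$ and every remaining term is nonnegative, so $c_j^2\le a=\tfrac1d\sum_{i=1}^n c_i^2$ for each $j$. Because $c_1=\max_j c_j$, this is precisely the fundamental inequality \eqref{eq_fundamentalInequality}, i.e. condition \eqref{eq_regularPart} holds already at $k=0$, giving $k_0=0$ and hence regularity. Alternatively, the tight frame attains the lower bound above, so it is a global minimizer of the frame potential, and Lemma \ref{lemma_irregularframes} then forces $k_0=0$.

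For the sufficiency direction ($\Leftarrow$), assume $\{c_i\}_{i=1}^n$ is regular, so $k_0=0$. I would observe that the frame potential is continuous on the compact domain $\prod_{i=1}^n\{\varphi_i:\|\varphi_i\|=c_i\}$, a product of spheres, so a minimizer over all norm-constrained configurations exists. By Lemma \ref{lemma_irregularframes} with $k_0=0$, every such minimizer is a tight frame; this tight frame has the prescribed norms and solves \eqref{eq_tightframe}, establishing existence.

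I expect the sufficiency direction to be the substantive one, since it rests entirely on Lemma \ref{lemma_irregularframes} together with the compactness argument that guarantees the minimizer is actually realized rather than merely approached. The necessity direction becomes routine once the test vector $x=\varphi_j/c_j$ is chosen. The main technical point to verify carefully is that Lemma \ref{lemma_irregularframes} is applied to minimizers over all configurations with the given norms (not only over a priori frames), and that the equality case of the frame-potential lower bound is genuinely equivalent to $S=aI_d$.
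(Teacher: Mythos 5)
Your proof is correct, and it supplies more than the paper does: the paper gives no proof of this lemma at all, quoting it from \cite{Casazza06} with only the remark that it is a corollary of Lemma \ref{lemma_irregularframes}. Your sufficiency direction is exactly that intended corollary route, and you make explicit the two points the paper leaves implicit: a minimizer of the frame potential \eqref{eq_framePotential} exists because the potential is continuous on the compact product of spheres $\prod_{i=1}^n\{\varphi_i\in\mathbb{R}^d:\|\varphi_i\|=c_i\}$, and for $k_0=0$ Lemma \ref{lemma_irregularframes} says any such minimizer is a tight frame for all of $\mathbb{R}^d$, hence solves \eqref{eq_tightframe}. Your necessity direction (testing $\sum_{i=1}^n\langle x,\varphi_i\rangle^2=a\|x\|^2$ with $x=\varphi_j/c_j$ and using $a=\frac{1}{d}\sum_{i=1}^n c_i^2$ from the trace) is more elementary than deducing necessity from Lemma \ref{lemma_irregularframes}, and is the cleaner of your two options: your ``alternative'' route is also salvageable but needs one extra step, namely that if $k_0\ge 1$ then a tight frame, being a global minimizer, would have $\varphi_1$ orthogonal to the remaining vectors and hence an eigenvector of the frame operator with eigenvalue $c_1^2$, forcing $c_1^2=\frac{1}{d}\sum_{i=1}^n c_i^2$ and contradicting the failure of \eqref{eq_fundamentalInequality} at $k=0$. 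The two caveats you flag at the end both resolve in your favor: in \cite{Casazza06} the frame potential is minimized over all sequences with the prescribed norms (not only over sequences assumed a priori to be frames), so Lemma \ref{lemma_irregularframes} applies to the minimizer produced by your compactness argument; and since the frame operator is symmetric, equality of all its eigenvalues is indeed equivalent to $\sum_{i=1}^n\varphi_i\varphi_i^T=aI_d$, so attaining your Cauchy--Schwarz lower bound is genuinely equivalent to solving \eqref{eq_tightframe}.
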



\section{Problem Formulation}\label{section_problemFormulation}
Consider one target and $n$ sensors in $\mathbb{R}^d$ ($d=2$ or 3 and $n\ge d$).
As shown in Table \ref{table:differentSensorTypes}, we consider three types of sensors: bearing-only, range-only, and RSS.
Suppose $n$ sensors involve only one sensor type. Sensor networks with mixed sensor types are not addressed.
Following \cite{bishop10,dogancy08,Sonia06,Bishop09RSS}, a rough estimation $p \in \mathbb{R}^d$ of the target position is assumed to be obtained in other ways.
Since $p$ is the only available information of the target, the optimal placement will be determined based on this estimation.
As illustrated in \cite{Sonia06}, the rough estimation of the target position can be obtained from a Kalman filter in practice and the optimal placement can be applied to improve target tracking performance.
It should be noted that this paper only focuses on determining optimal sensor placements and will not discuss their application to target localization or tracking.
Denote the position of sensor $i$ as $s_i\in\mathbb{R}^d$, $i\in\{1,\dots,n\}$.
Then $r_i=s_i-p$ denotes the position of sensor $i$ relative to the target.
The relative sensor-target placement can be fully described by $\{r_i\}_{i=1}^n$.
Our aim is to determine the optimal $\{r_i\}_{i=1}^n$ such that certain objective function can be optimized.
The distance between sensor $i$ and the target is given by $\|r_i\|$.
The unit-length vector $g_i=r_i/\|r_i\|$ represents the orientation of sensor $i$ relative to the target.

\subsection{Sensor Measurement Model and FIM}
For any sensor type in Table \ref{table:differentSensorTypes}, the measurement model of sensor $i$ is expressed as
\begin{align*}
    z_i=h_i(r_i)+v_i,
\end{align*}
where $z_i\in\mathbb{R}^m$ denotes the measurement of sensor $i$, the function $h_i(r_i): \mathbb{R}^d \rightarrow \mathbb{R}^m$ is determined by the type of the sensor, and
$v_i\in\mathbb{R}^m$ is the additive measurement noise.
We assume $v_i$ to be a zero-mean Gaussian noise with covariance as $\Sigma_i=\sigma_i^2 I_m\in\mathbb{R}^{m\times m}$, where $I_m$ denotes the $m\times m$ identity matrix.
By further assuming the measurement noises of different sensors are uncorrelated, the FIM given by $n$ sensors is expressed as
\begin{align}\label{eq_FIM}
    F
    = \sum_{i=1}^n \left(\frac{\partial h_i}{\partial p}\right)^T \Sigma_i^{-1} \frac{\partial h_i}{\partial p},
\end{align}
where $\partial h_i/\partial p$ denotes the Jacobian of $h_i(r_i)=h_i(s_i-p)$ with respect to $p$.
We refer to \cite{bishop10,dogancy08,Bishop09RSS,Sonia06,xielihuabook} for a detailed derivation of the FIM formula in \eqref{eq_FIM}.
\begin{table*}  
      \caption{Measurement models and FIMs of the three sensor types.} \label{table:differentSensorTypes}
      \centering
      \begin{tabular}{lllll}
        \hline
        Sensor type  & Measurement model                              & FIM                           & Coefficient                 &  Optimality criterion  \\
        \hline

        Bearing-only & $\displaystyle h_i(r_i)=\frac{r_i}{\|r_i\|}$ & $\displaystyle F=\sum_{i=1}^n c_i^2 (I_d-g_i g_i^T)$ & $\displaystyle c_i=\frac{1}{\sigma_i\|r_i\|}$ & $\displaystyle \min \left\|\sum_{i=1}^n c_i^2 g_i g_i^T\right\|^2$\\
        \hline

        Range-only   & $h_i(r_i)=\|r_i\|$                             & $\displaystyle F=\sum_{i=1}^n c_i^2 g_i g_i^T$ &     $\displaystyle c_i=\frac{1}{\sigma_i}$    & $\displaystyle \min \left\|\sum_{i=1}^n c_i^2 g_ig_i^T\right\|^2$\\
        \hline

        RSS          & $h_i(r_i)=\ln \|r_i\|$                         & $\displaystyle F=\sum_{i=1}^n c_i^2 g_ig_i^T$ &     $\displaystyle c_i=\frac{1}{\sigma_i\|r_i\|}$    & $\displaystyle \min \left\|\sum_{i=1}^n c_i^2 g_ig_i^T\right\|^2$ \\
        \hline
      \end{tabular}
\end{table*}

The measurement models of bearing-only, range-only, and RSS sensors are given in Table~\ref{table:differentSensorTypes}.
The measurement of a bearing-only sensor is conventionally modeled as one angle (azimuth) in 2D or two angles (azimuth and altitude) in 3D.
The drawback of this kind of model is that the model complexity increases dramatically as the dimension increases.
As shown in Table~\ref{table:differentSensorTypes}, we model the measurement of a bearing-only sensor as a unit-length vector pointing from the target to the sensor.
A unit-length vector essentially characterizes a bearing and is very suitable to represent a bearing-only measurement.
This model, which was proposed in our previous work \cite{zhao2012}, enables us to easily formulate optimal bearing-only placement in $\mathbb{R}^2$ and $\mathbb{R}^3$.
The measurement model of range-only sensors given in Table \ref{table:differentSensorTypes} is the same as the one in \cite{bishop10}.
The measurement model of RSS sensors given in Table \ref{table:differentSensorTypes} is a modified version of the one in \cite{Bishop09RSS}. Without loss of generality, we simplify the model in \cite{Bishop09RSS} by omitting certain additive and multiplicative constants.


The FIMs of the three sensor types are shown in Table \ref{table:differentSensorTypes}.
The FIM can be calculated by substituting $h(r_i)$ in to \eqref{eq_FIM}.
The calculation is straightforward and omitted here.
As will be shown later, the coefficients $\{c_i\}_{i=1}^n$ in the FIMs fully determine the optimal placements.
Following \cite{bishop10,dogancy08,Bishop09RSS,Sonia06}, we assume the coefficient $c_i$ to be \emph{arbitrary but fixed}.
(i) For bearing-only or RSS sensors, because $c_i=1/(\sigma_i\|r_i\|)$, both $\sigma_i$ and $\|r_i\|$ are assumed to be fixed.
Otherwise, if $\|r_i\|$ is unconstrained, the placement will be optimal when $\|r_i\|$ approaches zero.
To avoid this trivial solution, it is reasonable to assume $\|r_i\|$ to be fixed.
(ii) For range-only sensors, because $c_i=1/\sigma_i$, only $\sigma_i$ is assumed to be fixed.
Hence $\|r_i\|$ will have no influence on the optimality of the placement for range-only sensors.

To end this subsection, we would like to point out that the FIMs given in Table \ref{table:differentSensorTypes} are consistent with the ones given in \cite{bishop10,dogancy08,Bishop09RSS,Sonia06} in 2D cases.
To verify that, we can substitute $g_i=[\cos \theta_i, \sin \theta_i]^T\in\mathbb{R}^2$ into the FIMs in Table \ref{table:differentSensorTypes}.

\subsection{New Criterion for Optimal Placement}\label{subsetction_newcriterion}
The conventional criterion for optimal placement is to maximize the determinant of the FIM, i.e., $\det F$.
But $\det F$ is hardly analytically tractable in $\mathbb{R}^3$.
In order to analytically characterize optimal placements in $\mathbb{R}^2$ and $\mathbb{R}^3$, we will consider a new objective function.
Let $\{\lambda_i\}_{i=1}^d$ be the eigenvalues of $F$ and $\bar{\lambda}=1/d\sum_{i=1}^d \lambda_i$.
The new objective function considered in this paper is $\|F-\bar{\lambda}I_d\|^2$.
Compared to the conventional one $\det F$, the new objective function $\|F-\bar{\lambda}I_d\|^2$ is of strong analytical tractability.
We next formally state the optimal sensor placement problem that we are going to solve.

\begin{problem}\label{problem_problemDefinition}
    Consider one target and $n$ sensors in $\mathbb{R}^d$ ($d=2$ or $3$ and $n\ge d$).
    The sensors involve only one of the three sensor types in Table \ref{table:differentSensorTypes}.
    Given arbitrary but fixed positive coefficients $\{c_i\}_{i=1}^n$, find the optimal placement $\{g_i^*\}_{i=1}^n$ such that
    \begin{align}\label{eq_problemdefinition1}
        \{g_i^*\}_{i=1}^n=\underset{\{g_i\}_{i=1}^n \subset \mathbb{S}^{d-1}}{\arg\min} \|F-\bar{\lambda}I_d\|^2,
    \end{align}
    where $\mathbb{S}^{d-1}$ denotes the unit sphere in $\mathbb{R}^d$.
\end{problem}

\begin{remark}
    The relative sensor-target placement can be fully described by $\{r_i\}_{i=1}^n$.
    Recall $\|r_i\|$ is assumed to be fixed for bearing-only or RSS sensors, and $\|r_i\|$ has no effect on the placement optimality for range-only sensors.
    Thus for any sensor type, the optimal sensor placement can also be fully described by $\{g_i\}_{i=1}^n$.
    That means we only need to determine the optimal relative sensor-target bearings $\{g_i^*\}_{i=1}^n$ to obtain the optimal placement.
\end{remark}


Although the FIMs of different sensor types may have different formulas as shown in Table~\ref{table:differentSensorTypes},
the following result shows that substituting the FIMs of the three sensor types into \eqref{eq_problemdefinition1} will lead to an identical objective function.
The following result is important because it enables us to \emph{unify} the formulations of optimal placement for the three sensor types.

\begin{lemma}\label{lemma_problemdefinition2}
    Consider one target and $n$ sensors in $\mathbb{R}^d$ ($d=2$ or $3$ and $n\ge d$).
    The sensors involve only one of the three sensor types in Table \ref{table:differentSensorTypes}.
    The problem defined in \eqref{eq_problemdefinition1} is equivalent to
    \begin{align}\label{eq_problemdefinition2}
        \{g_i^*\}_{i=1}^n = \underset{\{g_i\}_{i=1}^n\subset\mathbb{S}^{d-1}}{\arg\min} \left\| G \right\|^2,
    \end{align}
    where $G=\sum_{i=1}^n c_i^2 g_i g_i^T$.
\end{lemma}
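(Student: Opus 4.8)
The plan is to reduce the objective $\|F-\bar{\lambda}I_d\|^2$ in \eqref{eq_problemdefinition1} to $\|G\|^2$ by showing that, for all three sensor types, the centered FIM $F-\bar{\lambda}I_d$ coincides with the matrix $G-\tfrac{1}{d}\tr(G)\,I_d$ up to an overall sign, and that passing from this centered matrix to $G$ itself only shifts the objective by an additive constant that is independent of the placement $\{g_i\}_{i=1}^n$. The single fact driving everything is that each $g_i$ is a unit vector, so $\tr(g_ig_i^T)=\|g_i\|^2=1$ and hence
\begin{align*}
    \tr(G)=\sum_{i=1}^n c_i^2\,\tr(g_ig_i^T)=\sum_{i=1}^n c_i^2.
\end{align*}
Because the $c_i$ are arbitrary but fixed, this trace is a constant, call it $\gamma$, that does not depend on the placement; write $\bar{\mu}=\gamma/d$, so that $G-\bar{\mu}I_d$ is $G$ with its eigenvalue mean removed.

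Next I would check the two FIM forms of Table~\ref{table:differentSensorTypes} separately and show they collapse to the same centered matrix. For range-only and RSS sensors $F=G$, so $\tr(F)=\gamma$, giving $\bar{\lambda}=\gamma/d=\bar{\mu}$ and $F-\bar{\lambda}I_d=G-\bar{\mu}I_d$. For bearing-only sensors $F=\gamma I_d-G$, so $\tr(F)=d\gamma-\gamma=(d-1)\gamma$, giving $\bar{\lambda}=(d-1)\gamma/d$ and
\begin{align*}
    F-\bar{\lambda}I_d=\gamma I_d-G-\tfrac{d-1}{d}\gamma I_d=\bar{\mu}I_d-G=-(G-\bar{\mu}I_d).
\end{align*}
In either case $\|F-\bar{\lambda}I_d\|^2=\|G-\bar{\mu}I_d\|^2$, since the overall sign is irrelevant to the Frobenius norm.

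Finally I would expand this common quantity, using that $G$ is symmetric and $\tr(G)=\gamma$:
\begin{align*}
    \|G-\bar{\mu}I_d\|^2=\tr\big((G-\bar{\mu}I_d)^2\big)=\|G\|^2-2\bar{\mu}\,\tr(G)+d\bar{\mu}^2=\|G\|^2-\frac{\gamma^2}{d}.
\end{align*}
As $\gamma^2/d$ is constant over all admissible placements, minimizing $\|F-\bar{\lambda}I_d\|^2$ is equivalent to minimizing $\|G\|^2$, which is exactly \eqref{eq_problemdefinition2}. This argument is essentially bookkeeping rather than a deep computation; the only points that require care are verifying that $\bar{\lambda}$ is placement-independent (which rests squarely on the unit-norm constraint $g_i\in\mathbb{S}^{d-1}$ together with the fixed coefficients) and noticing that the bearing-only case yields the \emph{negative} of the range/RSS centered matrix, so that all three types genuinely reduce to one objective. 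I do not anticipate any substantive obstacle beyond these two observations.
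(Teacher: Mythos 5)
Your proposal is correct and follows essentially the same route as the paper's own proof: case-split on sensor type to show $F-\bar{\lambda}I_d$ equals $\pm\bigl(G-\tfrac{1}{d}\sum_{i=1}^n c_i^2\, I_d\bigr)$, then expand the Frobenius norm to peel off the placement-independent constant $\tfrac{1}{d}\bigl(\sum_{i=1}^n c_i^2\bigr)^2$. The only cosmetic difference is that you make the cross-term cancellation $-2\bar{\mu}\,\tr(G)+d\bar{\mu}^2=-\gamma^2/d$ explicit, whereas the paper states the expansion directly.
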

\begin{proof}
    If all sensors are bearing-only, the FIM is $F=\sum_{i=1}^n c_i^2 (I_d-g_i g_i^T)$ and then $\bar{\lambda}={1}/{d}\sum_{j=1}^d \lambda_j=\tr F /d={(d-1)}/{d}\sum_{i=1}^n c_i^2$. Hence
    \begin{align*}
        F-\bar{\lambda}I_d
        &=\sum_{i=1}^n c_i^2 (I_d-g_i g_i^T) -\frac{d-1}{d}\sum_{i=1}^n c_i^2 I_d \\
        &=-\sum_{i=1}^n c_i^2 g_i g_i^T + \frac{1}{d}\sum_{i=1}^n c_i^2I_d.
    \end{align*}
    If all sensors are range-only or RSS, the FIM is $F=\sum_{i=1}^n c_i^2 g_ig_i^T$ and then $\bar{\lambda}={1}/{d}\sum_{j=1}^d \lambda_j=\tr F/d={1}/{d}\sum_{i=1}^n c_i^2$. Hence
    \begin{align*}
        F-\bar{\lambda}I_d =\sum_{i=1}^n c_i^2 g_i g_i^T-\frac{1}{d}\sum_{i=1}^n c_i^2I_d.
    \end{align*}
    Therefore, for any one of the three sensor types given in Table~\ref{table:differentSensorTypes}, the new objective function can be rewritten as
    \begin{align}\label{eq_objectivefunctionvalue1}
        \|F-\bar{\lambda}I_d\|^2
        &= \left\|\sum_{i=1}^n c_i^2 g_ig_i^T-\frac{1}{d}\sum_{i=1}^n c_i^2 I_d\right\|^2 \nonumber \\
        &= \left\|G\right\|^2 -\frac{1}{d}\left(\sum_{i=1}^n c_i^2\right)^2.
    \end{align}
    Because $1/d(\sum_{i=1}^n c_i^2)^2$ is constant, minimizing $\|G\|^2$ is equivalent to minimizing $\|F-\bar{\lambda}I_d\|^2$.
\end{proof}

One primary task of this paper is to solve the parameter optimization problem \eqref{eq_problemdefinition2}.
It should be noted that we must clearly know the sensor type that we work with, such that the coefficients $\{c_i\}_{i=1}^n$ in $G$ can be calculated correctly according to the sensor type.
Once $\{c_i\}_{i=1}^n$ have been calculated, the sensor types will be transparent to us.
As a consequence, the analysis of optimal sensor placement in the sequel of the paper will apply to all the three sensor types.


\subsection{Relationship between the New and Conventional Criterions}

The new criterion for optimal placement is to minimize $\|F-\bar{\lambda}I_d\|^2$, while the widely adopted conventional one is to maximize $\det F$.
We will show that the new criterion has a very close connection to the conventional one.

\begin{lemma}\label{lemma_newoldequivalence}
    For any one of the three sensor types given in Table \ref{table:differentSensorTypes}, we have
    \begin{align*}
        \det F\le \bar{\lambda}^d,
    \end{align*}
    where the equality holds if and only if
    \begin{align}
        \|F-\bar{\lambda}I_d\|^2=0.
    \end{align}
\end{lemma}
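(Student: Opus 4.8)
The plan is to reduce the matrix inequality to a scalar inequality among the eigenvalues of $F$ and then invoke the classical arithmetic-mean--geometric-mean (AM-GM) inequality. For every one of the three sensor types the FIM in Table~\ref{table:differentSensorTypes} is a sum of symmetric positive semidefinite matrices (the projectors $I_d-g_ig_i^T$ in the bearing-only case, and the rank-one matrices $g_ig_i^T$ in the range-only and RSS cases), so $F$ is real symmetric with nonnegative eigenvalues $\lambda_1,\dots,\lambda_d$, and $\bar{\lambda}=\frac{1}{d}\sum_{i=1}^d\lambda_i$ is exactly their arithmetic mean. This lets me treat $\det F$ and $\bar{\lambda}$ as symmetric functions of the same eigenvalues.

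First I would write $\det F=\prod_{i=1}^d\lambda_i$ and apply AM-GM to the nonnegative numbers $\lambda_1,\dots,\lambda_d$:
\begin{align*}
\det F=\prod_{i=1}^d\lambda_i\le\left(\frac{1}{d}\sum_{i=1}^d\lambda_i\right)^{\!d}=\bar{\lambda}^{\,d},
\end{align*}
which is precisely the asserted inequality.

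Next, for the equality condition I would use that AM-GM holds with equality if and only if all the $\lambda_i$ coincide, hence if and only if $\lambda_i=\bar{\lambda}$ for every $i$. To connect this to the new objective function, note that $F-\bar{\lambda}I_d$ is symmetric with eigenvalues $\lambda_i-\bar{\lambda}$, so by unitary invariance of the Frobenius norm $\|F-\bar{\lambda}I_d\|^2=\sum_{i=1}^d(\lambda_i-\bar{\lambda})^2$. Therefore all eigenvalues equal $\bar{\lambda}$ exactly when $\|F-\bar{\lambda}I_d\|^2=0$, which establishes the stated equivalence. I do not anticipate a genuine obstacle here; the only points demanding a little care are justifying that the eigenvalues are real and nonnegative (so that AM-GM applies and $\det F\ge 0$) and recording the eigenvalue identity for the Frobenius norm, both of which follow at once from $F$ being symmetric positive semidefinite.
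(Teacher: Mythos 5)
Your proposal is correct and follows essentially the same route as the paper: both arguments note that $F$ is symmetric positive semidefinite with real nonnegative eigenvalues, apply the AM--GM inequality to obtain $\det F\le\bar{\lambda}^d$, and characterize equality by $\lambda_j=\bar{\lambda}$ for all $j$, which is equivalent to $\|F-\bar{\lambda}I_d\|^2=0$. The only cosmetic difference is that you spell out the Frobenius-norm identity $\|F-\bar{\lambda}I_d\|^2=\sum_{j=1}^d(\lambda_j-\bar{\lambda})^2$, whereas the paper passes directly through $F=\bar{\lambda}I_d$; the content is the same.
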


\begin{proof}
For any one of the three sensor types, the FIM $F$ is symmetric positive (semi) definite. Hence $\lambda_j$ is real and nonnegative.
From the FIMs shown in Table \ref{table:differentSensorTypes}, we have $\sum_{j=1}^d \lambda_j=\tr F=(d-1)\sum_{i=1}^n c_i^2$ for bearing-only sensors, and $\sum_{j=1}^d \lambda_j=\tr F=\sum_{i=1}^n c_i^2$ for range-only or RSS sensors.
Note $\{c_i\}_{i=1}^n$ is assumed to be fixed. Hence $\sum_{j=1}^d \lambda_j$ is an invariant quantity for any one of the three sensor types.
By the inequality of arithmetic and geometric means, the conventional objective function $\det F$ satisfies
\begin{align}\label{eq_detF_upperbound}
    \det F
    =\prod_{j=1}^d \lambda_j
    \le \left(\frac{1}{d}\sum_{j=1}^d \lambda_j\right)^d
    =\bar{\lambda}^d,
\end{align}
where the equality holds if and only if $\lambda_j=\bar{\lambda}$ for all $j\in\{1,\dots,d\}$, which means
\begin{align}\label{eq_detF_upperbound_condition}
    F=\bar{\lambda}I_d \Leftrightarrow \|F-\bar{\lambda}I_d\|^2=0.
\end{align}
To sum up, $\det F$ is maximized to its upper bound $\bar{\lambda}^d$ if and only if $\|F-\bar{\lambda}I_d\|^2=0$.
\end{proof}

Based on Lemma \ref{lemma_newoldequivalence}, we next further examine the relationship between the new and conventional criterions case by case.
(i) In $\mathbb{R}^2$,
we have $\det F = {1}/{2}((\tr\mathcal{F})^2 - \tr(F^2)) = {1}/{2} \left(4\bar{\lambda}^2 - \|F\|^2\right)$
and $\|F-\bar{\lambda}I_2\|^2=\tr {(F-\bar{\lambda}I_2)^2}= \|F\|^2-2\bar{\lambda}^2$, which suggest
\begin{align*}
    \|F-\bar{\lambda}I_2\|^2=-2\det F + 2\bar{\lambda}^2.
\end{align*}
Because $2\bar{\lambda}^2$ is constant, minimizing $\|F-\bar{\lambda}I_2\|^2$ is rigorously equivalent to maximizing $\det F$ in $\mathbb{R}^2$.
Thus the new criterion is rigorously equivalent to the conventional one for all 2D cases.
As a consequence, our analysis based on the new criterion will be consistent with the 2D results in \cite{bishop10,dogancy08,Sonia06,Bishop09RSS}.
(ii) In $\mathbb{R}^3$, if $\|F-\bar{\lambda}I_3\|^2$ is able to achieve zero, then $\det F$ can be maximized to its upper bound as shown in Lemma \ref{lemma_newoldequivalence}.
In this case the new criterion is still rigorously equivalent to the conventional one.
(iii) In $\mathbb{R}^3$, $\|F-\bar{\lambda}I_3\|^2$ is not able to reach zero in certain \emph{irregular} cases
(see Section~\ref{section_optimalityanalysis} for the formal definition of irregular).
In these cases $\det F$ and $\|F-\bar{\lambda}I_3\|^2$ may not be optimized simultaneously.
But as will be shown later, the analysis of irregular cases in $\mathbb{R}^3$ based on the new criterion is a also reasonable extension of the analysis of irregular cases in $\mathbb{R}^2$.

\subsection{Equivalent Placements}
Before solving \eqref{eq_problemdefinition2}, we identify a group of placements that result in the same value of $\|G\|^2$.

\begin{proposition}\label{propsition_eigenvalueInvariance}
    The objective function $\|G\|^2$ is invariant to the sign of $g_i$ for all $i\in\{1,\dots,n\}$ and any orthogonal transformations over $\{g_i\}_{i=1}^n$.
\end{proposition}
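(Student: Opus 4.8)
The plan is to verify the two claimed invariances separately, each by substituting the transformed bearings into $G=\sum_{i=1}^n c_i^2 g_i g_i^T$ and tracking how the matrix, and hence its Frobenius norm, changes.

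First I would dispose of sign invariance, which is essentially immediate. Replacing $g_i$ by $-g_i$ leaves the rank-one term untouched, since $(-g_i)(-g_i)^T=g_ig_i^T$. Hence each summand $c_i^2 g_i g_i^T$, and therefore $G$ itself, is unchanged, so $\|G\|^2$ is trivially invariant. Because this holds independently for every index, arbitrary sign changes across $\{g_i\}_{i=1}^n$ leave $\|G\|^2$ fixed.

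Next I would treat the orthogonal transformation. Applying an orthogonal matrix $Q\in\mathbb{R}^{d\times d}$ (so that $Q^TQ=QQ^T=I_d$) to every bearing, $g_i\mapsto Qg_i$, sends $G$ to
\[
    \sum_{i=1}^n c_i^2 (Qg_i)(Qg_i)^T = Q\Big(\sum_{i=1}^n c_i^2 g_i g_i^T\Big)Q^T = QGQ^T .
\]
The remaining step is to show $\|QGQ^T\|^2=\|G\|^2$. Writing the Frobenius norm as a trace and using the cyclic property of the trace together with $Q^TQ=I_d$,
\[
    \|QGQ^T\|^2=\tr\big((QGQ^T)^T QGQ^T\big)=\tr\big(QG^TGQ^T\big)=\tr\big(G^TG\,Q^TQ\big)=\tr(G^TG)=\|G\|^2 .
\]
This is just the unitary invariance of the Frobenius norm specialized to orthogonal conjugation.

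I do not anticipate a genuine obstacle here; the statement is a bookkeeping exercise. The only points requiring care are (i) interpreting ``orthogonal transformation over $\{g_i\}$'' as the \emph{same} $Q$ applied to all bearings simultaneously, so that the sum factors cleanly as $QGQ^T$, and (ii) invoking the cyclic property of the trace at the right moment. An equivalent and perhaps cleaner route avoids the trace manipulation altogether: since $G$ is symmetric, $\|G\|^2=\sum_{j=1}^d \lambda_j^2$ where $\{\lambda_j\}_{j=1}^d$ are its eigenvalues, and orthogonal conjugation $G\mapsto QGQ^T$ preserves the spectrum while sign flips leave $G$ (hence its spectrum) unchanged. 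Either formulation yields the result.
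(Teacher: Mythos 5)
Your proposal is correct and follows essentially the same route as the paper's proof: sign invariance via $(-g_i)(-g_i)^T=g_ig_i^T$, and orthogonal invariance by factoring the transformed frame operator as $QGQ^T$ and cancelling $Q$ inside the trace (the paper uses the symmetry of $G$ to write this as $\tr(G^2)$, whereas you use $\tr(G^TG)$ directly, a negligible difference). The alternative spectral argument you sketch at the end is a fine equivalent formulation but not a distinct approach.
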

\begin{proof}
    First, $g_i g_i^T=(-g_i )(-g_i )^T$ for all $i\in\{1,\dots,n\}$, hence $\|G\|^2$ is invariant to the sign of $g_i$.
    Second, let $U\in \mathbb{R}^{d\times d}$ be an orthogonal matrix satisfying $U^TU=I_d$. Applying $U$ to $\{g_i\}_{i=1}^n$ yields $\{g_i'=Ug_i\}_{i=1}^n$. Then we have $G'= \sum_{i=1}^n c_i^2 g_i'(g_i')^T =\sum_{i=1}^n c_i^2 (Ug_i)(Ug_i)^T = UGU^T$.
    Since $G$ and $G'$ are both symmetric, we have $\|G'\|^2=\tr(UGU^TUGU^T)=\tr(G^2)=\|G\|^2$.
\end{proof}


Geometrically speaking, changing the sign of $g_i$ means flipping sensor $i$ about the target, and an orthogonal transformation represents a rotation, reflection or both combined operation over all sensors.
It is noticed that the invariance to the sign change of $g_i$ was originally recognized in \cite{dogancy08} for bearing-only sensors.
By Proposition \ref{propsition_eigenvalueInvariance}, we define the following equivalence relationship.

\begin{definition}[Equivalent placements]\label{definition_equivalence}
    Given arbitrary but fixed coefficients $\{c_i\}_{i=1}^n$, two placements $\{g_i\}_{i=1}^n$ and $\{g_i'\}_{i=1}^n$ are called equivalent if they are differed by indices permutation, flipping any sensors about the target, or any global rotation, reflection or both combined over all sensors.
\end{definition}

Due to the equivalence, there always exist an infinite number of equivalent optimal placements minimizing $\|G\|^2$.
If two optimal placements are equivalent, they lead to the same objective function value.
But the converse statement is not true in general.
In Section \ref{subsection_uniqueness}, we will give the condition under which the converse is true.
Examples of 2D equivalent placements are given in Figure~\ref{fig_EquivalentPlacements}.

\begin{figure}
  \centering
  \includegraphics[]{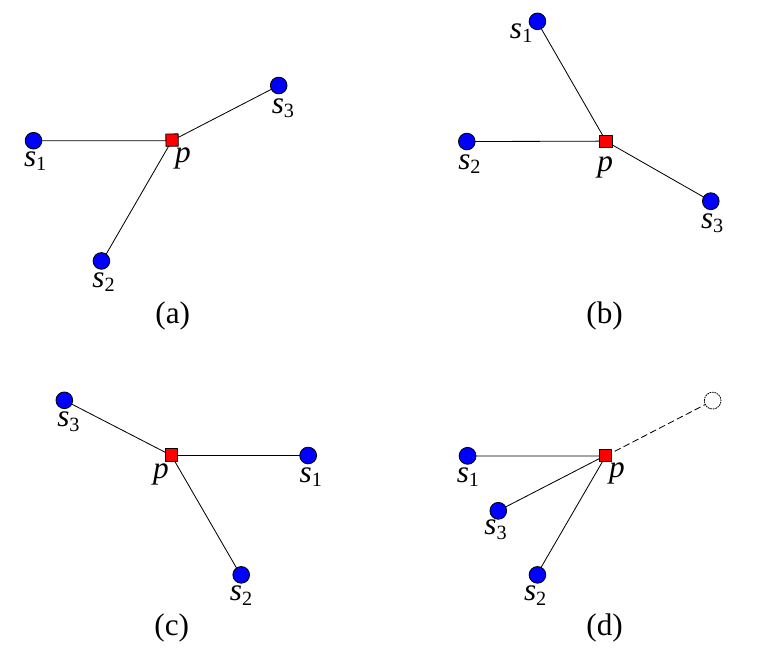}
  \caption{Examples of equivalent placements ($d=2,n=3$): (a) Original placement. (b) Rotate all sensors about the target 60 degrees clockwise. (c) Reflect all sensors about the vertical axis. (d) Flipping the sensor $s_3$ about the target.}
  \label{fig_EquivalentPlacements}
\end{figure}

\section{Necessary and Sufficient Conditions for Optimal Placement}\label{section_optimalityanalysis}
In this section, we will prove the necessary and sufficient conditions for optimal placements solving \eqref{eq_problemdefinition2}.
Recall $G=\sum_{i=1}^n c_i^2g_ig_i^T$. Then we have
\begin{align*}
    \|G\|^2
    &=\sum_{i=1}^n \sum_{j=1}^n (c_ic_jg_i^Tg_j)^2 \\
    &=\sum_{i=1}^n \sum_{j=1}^n (\varphi_i\varphi_j)^2,
\end{align*}
where $\varphi_i=c_i g_i$ for $i\in\{1,\dots,n\}$.
The vectors $\{\varphi_i\}_{i=1}^n$ form a frame in $\mathbb{R}^d$.
The objective function $\|G\|^2$ is exactly the frame potential of the frame $\{\varphi_i\}_{i=1}^n$ as shown in \eqref{eq_framePotential}.
The matrix $G$ is the frame operator.
Note here $\|\varphi_i\|=c_i$.
The coefficient sequence $\{c_i\}_{i=1}^n$ will fully determine the minimizers of $\|G\|^2$.
According to the irregularity of $\{c_i\}_{i=1}^n$, optimal placements are categorized as regular and irregular as shown below.

When $\{c_i\}_{i=1}^n$ is regular, the necessary and sufficient condition of optimal placement is given below.
The 2D version of the following result has been proposed in \cite{bishop10,dogancy08,Bishop09RSS}.

\begin{theorem}[Regular optimal placement]\label{theorem_regularOptimalPlacement}
    In $\mathbb{R}^d$ with $d=2$ or $3$, 
    if the positive coefficient sequence $\{c_i\}_{i=1}^n$ is regular, then the objective function $\|G\|^2$ satisfies
    \begin{align}\label{eq_lowerboundRegular}
        \|G\|^2 \ge \frac{1}{d} \left( \sum_{i=1}^n c_i^2 \right)^2.
    \end{align}
    The lower bound of $\|G\|^2$ is achieved if and only if
    \begin{align}\label{eq_RegularOptimalPlacement}
        \sum_{i=1}^n c_i^2g_ig_i^T=\frac{1}{d}\sum_{i=1}^n c_i^2 I_d.
    \end{align}
\end{theorem}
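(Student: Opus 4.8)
The plan is to read everything off the eigenvalues of the frame operator $G$, exploiting that $\|G\|^2$ is the frame potential. First I would use symmetry and positive semidefiniteness of $G$ to write $\|G\|^2=\tr(G^2)=\sum_{j=1}^d\lambda_j^2$, where $\lambda_1,\dots,\lambda_d\ge 0$ are the eigenvalues of $G$. The crucial invariant is the trace: since each $g_i$ is a unit vector, $\sum_{j=1}^d\lambda_j=\tr G=\sum_{i=1}^n c_i^2\|g_i\|^2=\sum_{i=1}^n c_i^2$, which is fixed once the coefficients are prescribed. With the sum of the $\lambda_j$ pinned, the lower bound \eqref{eq_lowerboundRegular} is exactly the power-mean (equivalently Cauchy--Schwarz) inequality applied to the eigenvalue vector against the all-ones vector, giving $(\sum_j\lambda_j)^2\le d\sum_j\lambda_j^2$, i.e. $\|G\|^2\ge \frac{1}{d}(\sum_i c_i^2)^2$. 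I would emphasize that this inequality uses nothing about regularity; it holds for every placement.

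Next I would handle the equality characterization, which is again a statement about the matrix $G$ alone. Equality in the power-mean inequality holds precisely when all eigenvalues coincide, $\lambda_1=\dots=\lambda_d=\bar{\lambda}=\frac{1}{d}\sum_i c_i^2$, and this is equivalent to $G=\bar{\lambda}I_d$, which is exactly condition \eqref{eq_RegularOptimalPlacement}. The reverse implication is the one-line verification that $G=\bar{\lambda}I_d$ forces $\|G\|^2=\tr((\bar{\lambda}I_d)^2)=d\bar{\lambda}^2=\frac{1}{d}(\sum_i c_i^2)^2$, so the bound is met. This settles the "if and only if" at the level of $G$.

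The one genuinely substantive point, and where the regularity hypothesis actually earns its place, is attainability: I must confirm that the equality configuration $G=\bar{\lambda}I_d$ is realizable by unit vectors with the prescribed weights, so that the characterization is not vacuous. This is precisely Lemma \ref{lemma_fundamentalInequality}. Setting $\varphi_i=c_ig_i$ so that $\|\varphi_i\|=c_i$, condition \eqref{eq_RegularOptimalPlacement} is the tight-frame equation \eqref{eq_tightframe} with frame bound $a=\bar{\lambda}$, and such a tight frame with the specified norms $\{c_i\}_{i=1}^n$ exists if and only if $\{c_i\}_{i=1}^n$ is regular. Hence under the regularity hypothesis the lower bound is attained, and it is attained exactly at the tight-frame placements; this is also consistent with Lemma \ref{lemma_irregularframes} specialized to $k_0=0$, where every minimizer of the frame potential is a tight frame. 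The main obstacle is therefore conceptual rather than computational: one must keep the two roles separate, since the inequality and the equality condition are general linear-algebra facts about a positive semidefinite matrix with fixed trace, whereas regularity is invoked solely to guarantee that a placement achieving the bound exists.
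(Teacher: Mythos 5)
Your proposal is correct and follows essentially the same route as the paper's own proof: both reduce $\|G\|^2$ to the sum of squared eigenvalues of $G$, use the fixed trace $\tr G=\sum_{i=1}^n c_i^2$ with the quadratic--arithmetic mean inequality to get the bound, characterize equality as $G=\bar{\lambda}I_d$, and invoke Lemma~\ref{lemma_fundamentalInequality} to show regularity guarantees attainability. Your explicit remark that the inequality and equality characterization need no regularity, with regularity used only for existence, is a clean articulation of exactly what the paper's proof does implicitly.
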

\begin{proof}
    Let $\{\mu_j\}_{j=1}^d$ be the eigenvalues of $G$.
    Then $\sum_{j=1}^d \mu_j=\tr G=\sum_{i=1}^n c_i^2$ is constant.
    Let $\bar{\mu}={1}/{d}\sum_{j=1}^d \mu_j={1}/{d}\sum_{i=1}^n c_i^2$.
    It is obvious that
    \begin{align*}
        \|G\|^2
        =\sum_{j=1}^d \mu_j^2
        \ge d\bar{\mu}^2
        =\frac{1}{d} \left( \sum_{i=1}^n c_i^2 \right)^2.
    \end{align*}
    The lower bound of $\|G\|^2$ is achieved if and only if $\mu_j=\bar{\mu}$ for all $j\in\{1,\dots,d\}$, which implies
    $    G=\bar{\mu}I_d$,
    i.e., the equation \eqref{eq_RegularOptimalPlacement}.
    By denoting $\varphi_i=c_ig_i$, \eqref{eq_RegularOptimalPlacement} becomes $\sum_{i=1}^n \varphi_i\varphi_i^T={1}/{d}\sum_{i=1}^n c_i^2 I_d$ which is the same as \eqref{eq_tightframe}.
    Thus a regular optimal placement solving \eqref{eq_RegularOptimalPlacement} actually corresponds to a tight frame.
    Because $\{c_i\}_{i=1}^n$ is regular, by Lemma \ref{lemma_fundamentalInequality} there exist optimal placements solving \eqref{eq_RegularOptimalPlacement}.
\end{proof}

We call a placement \emph{regular} when its coefficient sequence is regular, and \emph{regular optimal} when it solves \eqref{eq_RegularOptimalPlacement}.
To obtain a regular optimal placement, we still need to solve \eqref{eq_RegularOptimalPlacement}.
Details of the solutions to \eqref{eq_RegularOptimalPlacement} will be given in Section \ref{subsection_explicitconstruction}.

\begin{remark}
    When $\|G\|^2$ reaches its lower bound ${1}/{d} ( \sum_{i=1}^n c_i^2 )^2$, we have $\|F-\bar{\lambda}I_d\|^2=0$ by \eqref{eq_objectivefunctionvalue1}.
    By Lemma \ref{lemma_newoldequivalence}, the conventional objective function $\det F$ will be maximized to its upper bound.
    Therefore, regular optimal placements not only minimize the new objective functions $\|G\|^2$ and $\|F-\bar{\lambda}I_d\|^2$ but also maximize the conventional one $\det F$ in $\mathbb{R}^2$ and $\mathbb{R}^3$.
\end{remark}

When $\{c_i\}_{i=1}^n$ is irregular, \eqref{eq_RegularOptimalPlacement} will have no solution.
Then the the necessary and sufficient condition of optimal placement is given below.
The 2D version of the following result has been proposed in \cite{bishop10,dogancy08,Bishop09RSS}.

\begin{theorem}[Irregular optimal placement]\label{theorem_irregularOptimalPlacement}
    In $\mathbb{R}^d$ with $d=2$ or $3$, if the positive coefficient sequence $\{c_i\}_{i=1}^n$ is irregular with irregularity as $k_0\ge1$, without loss of generality $\{c_i\}_{i=1}^n$ can be assumed to be a non-increasing sequence, and then the objective function $\|G\|^2$ satisfies   \begin{align}\label{eq_lowerboundIrregular}
        \|G\|^2 \ge \sum_{i=1}^{k_0} c_i^4 + \frac{1}{d-k_0}\left( \sum_{i=k_0+1}^n c_i^2 \right)^2.
    \end{align}
    The lower bound of $\|G\|^2$ is achieved if and only if
    \begin{align}\label{eq_irregularOptimalPlacement}
        \{g_i\}_{i=1}^n=\{g_i\}_{i=1}^{k_0} \cup \{g_i\}_{i=k_0+1}^n,
    \end{align}
    where $\{g_i\}_{i=1}^{k_0}$ is an orthogonal set, and $\{g_i\}_{i=k_0+1}^n$ forms a regular optimal placement in the $(d-k_0)$-dimensional orthogonal complement of $\{g_i\}_{i=1}^{k_0}$.
\end{theorem}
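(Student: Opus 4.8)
The plan is to recognize, exactly as in the regular case, that the objective $\|G\|^2$ is the frame potential $\mathrm{FP}(\{\varphi_i\}_{i=1}^n)$ of the frame $\{\varphi_i = c_i g_i\}_{i=1}^n$ with prescribed norms $\|\varphi_i\| = c_i$, and then to invoke the structural characterization of its minimizers in Lemma \ref{lemma_irregularframes}. Since $\{c_i\}_{i=1}^n$ is non-increasing with irregularity $k_0$, that lemma states that every minimizer of the frame potential splits as $\{\varphi_i\}_{i=1}^{k_0} \cup \{\varphi_i\}_{i=k_0+1}^n$, where the first block is an orthogonal set and the second is a tight frame in the $(d-k_0)$-dimensional orthogonal complement of their span; moreover every local minimizer is global, so these configurations are exactly the minimizers. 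Translating back through $\varphi_i = c_i g_i$ with $c_i > 0$, orthogonality of the $\varphi_i$ is equivalent to orthogonality of the $g_i$, and the tight-frame condition on the complement is, by Theorem \ref{theorem_regularOptimalPlacement} (trivially so when $d-k_0=1$), precisely the statement that $\{g_i\}_{i=k_0+1}^n$ is a regular optimal placement there. This yields the claimed form \eqref{eq_irregularOptimalPlacement}.

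Before applying Theorem \ref{theorem_regularOptimalPlacement} to the second block I would first verify that the reduced sequence $\{c_i\}_{i=k_0+1}^n$ is regular with respect to the reduced dimension $d - k_0$. This is immediate from Definition \ref{definition_irregularity}: by minimality of $k_0$, the inequality $c_{k+1}^2 \le \frac{1}{d-k}\sum_{i=k+1}^n c_i^2$ holds at $k = k_0$, so $c_{k_0+1}^2 \le \frac{1}{d-k_0}\sum_{i=k_0+1}^n c_i^2$, which (since the sequence is non-increasing, $c_{k_0+1}$ is its maximum) is exactly the fundamental inequality \eqref{eq_fundamentalInequality} for $\{c_i\}_{i=k_0+1}^n$ in dimension $d-k_0$. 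Hence that block is regular, a tight frame exists there by Lemma \ref{lemma_fundamentalInequality}, and Theorem \ref{theorem_regularOptimalPlacement} applies verbatim. I would also note that $1 \le k_0 \le d-1$ guarantees $1 \le d-k_0 \le d-1$, so the complement is a nontrivial proper subspace.

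It remains to evaluate $\|G\|^2$ at such a minimizer to confirm the lower bound \eqref{eq_lowerboundIrregular}. Writing the frame potential as $\sum_{i}\sum_{j}(\varphi_i^T\varphi_j)^2$ and splitting the index set into the orthogonal block $A = \{1,\dots,k_0\}$ and the tight-frame block $B = \{k_0+1,\dots,n\}$, all cross terms with $i \in A$, $j \in B$ vanish because $B$ lies in the orthogonal complement of the span of $\{\varphi_i\}_{i\in A}$. Within $A$, orthogonality gives $\sum_{i,j \in A}(\varphi_i^T\varphi_j)^2 = \sum_{i=1}^{k_0} \|\varphi_i\|^4 = \sum_{i=1}^{k_0} c_i^4$. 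Within $B$, the frame is tight with frame operator $a I_{d-k_0}$, where taking the trace gives $a = \frac{1}{d-k_0}\sum_{i=k_0+1}^n c_i^2$, so its frame potential equals $\|a I_{d-k_0}\|^2 = a^2(d-k_0) = \frac{1}{d-k_0}\left(\sum_{i=k_0+1}^n c_i^2\right)^2$. Summing the two blocks reproduces \eqref{eq_lowerboundIrregular} exactly.

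The main obstacle is not a hard estimate but the correct marshalling of the reduction: confirming that the second block inherits regularity in the reduced dimension so that Theorem \ref{theorem_regularOptimalPlacement} is applicable, and that Lemma \ref{lemma_irregularframes} delivers \emph{both} directions of the equivalence (the ``local minimizer is global'' clause is what guarantees the listed configurations are the only minimizers). The heavy analytic content is already absorbed into Lemma \ref{lemma_irregularframes}; the remaining work is the faithful translation between the frame language and the placement language, together with the block decomposition of the frame potential carried out above.
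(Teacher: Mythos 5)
Your proposal is correct and follows essentially the same route as the paper: recognize $\|G\|^2$ as the frame potential of $\{c_i g_i\}_{i=1}^n$, invoke Lemma~\ref{lemma_irregularframes} for the structure of minimizers, and then evaluate the potential blockwise (orthogonal set contributes $\sum_{i=1}^{k_0} c_i^4$, tight-frame block contributes $\frac{1}{d-k_0}(\sum_{i=k_0+1}^n c_i^2)^2$) to confirm the lower bound. Your explicit check that the tail sequence $\{c_i\}_{i=k_0+1}^n$ is regular with respect to $d-k_0$ is a careful touch the paper leaves implicit, but it does not change the argument.
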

\begin{proof}
    Recall $\|G\|^2$ is the frame potential of the frame $\{\varphi_i\}_{i=1}^n$ where $\varphi_i=c_i g_i$.
    From Lemma \ref{lemma_irregularframes}, the minimizer of $\|G\|^2$ is of the following form: $\{c_ig_i\}_{i=1}^{k_0}$ is an orthogonal set, and $\{c_ig_i\}_{i=k_0+1}^n$ is a tight frame (i.e., a regular optimal placement) in the orthogonal complement of $\{c_ig_i\}_{i=1}^{k_0}$.

    Let $\Phi_1=[\varphi_1,\dots,\varphi_{k_0}]\in\mathbb{R}^{d\times k_0}$, $\Phi_2=[\varphi_{k_0+1},\dots,\varphi_n]$ $\in\mathbb{R}^{d\times (n-k_0)}$, and
    $\Phi=[\Phi_1, \Phi_2]\in\mathbb{R}^{d\times n}$.
    When $\{g_i\}_{i=1}^n$ is of the form in \eqref{eq_irregularOptimalPlacement}, the columns of $\Phi_1$ are orthogonal to those of $\Phi_2$. Then
    \begin{align*}
        \|G\|^2= \tr (\Phi^T\Phi)^2 = \tr (\Phi_1^T\Phi_1)^2 + \tr (\Phi_2^T\Phi_2)^2.
    \end{align*}
    Because $\{g_i\}_{i=1}^{k_0}$ is an orthogonal set, we have $\tr (\Phi_1^T\Phi_1)^2=\sum_{i=1}^{k_0}\|\varphi_i\|^4=\sum_{i=1}^{k_0}c_i^4$.
    Because $\{g_i\}_{i=k_0+1}^n$ is a regular optimal placement in a $(d-k_0)$-dimensional subspace, we have $\tr (\Phi_2^T\Phi_2)^2={1}/{(d-k_0)}(\sum_{i=k_0+1}^n c_i^2)^2$ by Theorem \ref{theorem_regularOptimalPlacement}. Therefore, when $\{g_i\}_{i=1}^n$ is of the form in \eqref{eq_irregularOptimalPlacement}, the objective function $\|G\|^2$ reaches its lower bound as shown in \eqref{eq_lowerboundIrregular}.
\end{proof}

\begin{figure}
  \centering
  \includegraphics[]{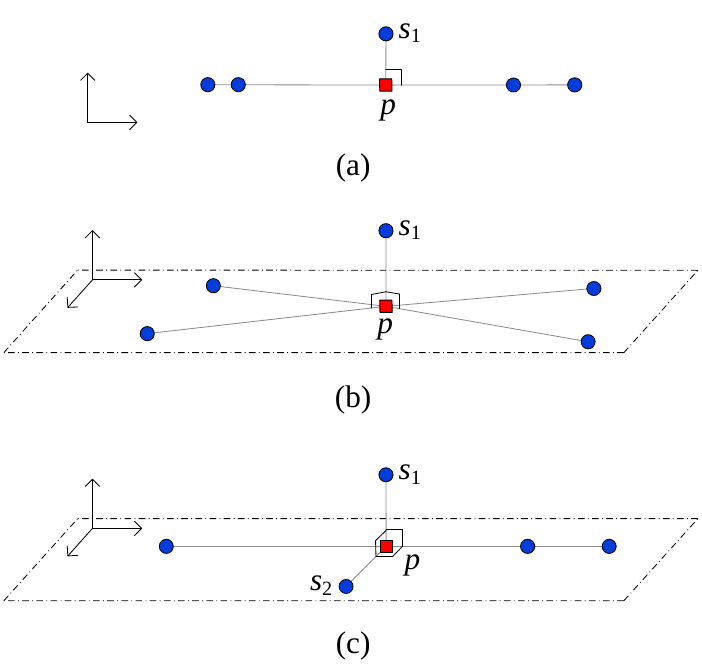}
  \caption{
  An illustration of the three kinds of irregular optimal placements in $\mathbb{R}^2$ and $\mathbb{R}^3$. (a) $d=2$, $k_0=1$; (b) $d=3$, $k_0=1$; (c) $d=3$, $k_0=2$.
  }
  \label{fig_irregularPlacements}
\end{figure}

We call a placement \emph{irregular} when its coefficient sequence is irregular, and \emph{irregular optimal} when it is of the form in \eqref{eq_irregularOptimalPlacement}.
In Theorem \ref{theorem_irregularOptimalPlacement}, because $\{g_i\}_{i=k_0+1}^n$ is a regular optimal placement in a $(d-k_0)$-dimensional space, an irregular optimal placement problem will eventually convert to a regular one in a lower dimensional subspace.

Apparently the irregularity of $\{c_i\}_{i=1}^n$ plays a key role in determining optimal placements.
Recall the irregularity $k_0$ of an irregular sequence with respect to $d$ satisfies $1\le k_0 \le d-1$.
In $\mathbb{R}^2$, we have $d=2$ and hence $k_0=1$; in $\mathbb{R}^3$, we have $d=3$ and hence $k_0=1$ or $2$.
Thus there exist only \emph{three} kinds of irregular optimal placements in $\mathbb{R}^2$ and $\mathbb{R}^3$.
By Theorem \ref{theorem_irregularOptimalPlacement}, these three kinds of irregular optimal placements can be intuitively described as below.

\begin{enumerate}[(i)]
  \item Irregular optimal placement in $\mathbb{R}^2$ with irregularity $k_0=1$:
    the vector $g_1$ is orthogonal to $\{g_i\}_{i=2}^n$, and $\{g_i\}_{i=2}^n$ are collinear. See an illustration in Figure \ref{fig_irregularPlacements} (a).
  \item Irregular optimal placement in $\mathbb{R}^3$ with irregularity $k_0=1$:
    the vector $g_1$ is orthogonal to $\{g_i\}_{i=2}^n$, and $\{g_i\}_{i=2}^n$ form a regular optimal placement in the 2D plane perpendicular to $g_1$. See an illustration in Figure \ref{fig_irregularPlacements} (b).
  \item Irregular optimal placement in $\mathbb{R}^3$ with irregularity $k_0=2$:
    the vectors $g_1$, $g_2$ and $\{g_i\}_{i=3}^n$ are mutually orthogonal, and $\{g_i\}_{i=3}^n$ are collinear. See an illustration in Figure \ref{fig_irregularPlacements} (c).
\end{enumerate}

We now intuitively explain why irregular optimal placements are the way as described above.
The sequence $\{c_i\}_{i=1}^n$ is irregular when certain $c_i$ is much larger than the others.
The coefficient $c_i$ actually is the weight for sensor $i$.
The larger the weight $c_i$ is, the more sensor $i$ contributes to the FIM.
Recall $c_i=1/(\sigma_i\|r_i\|)$ or $c_i=1/\sigma_i$.
If sensor $i$ is very close to the target (i.e., $\|r_i\|$ is small) and its measurement is very accurate (i.e., $\sigma_i$ is small), then $c_i$ will be very large and sensor $i$ may dominate all the others.
Thus the dominant sensor can measure the target sufficiently accurately in one dimension, and all the other sensors should measure the target in the orthogonal complement in order to improve the overall measurement accuracy.

To make our analysis more general, we do not assume $\sigma_i$'s to be identical. It is also meaningful to check the special case, $\sigma_i=\sigma_j$ for all $i\ne j$, which often arises in practice.
For bearing-only or RSS-based sensors, the coefficient is $c_i=1/(\sigma_i\|r_i\|)$. So when $\sigma_i=\sigma_j$ for all $i\ne j$,
a regular sequence $\{c_i\}_{i=1}^n$ implies the following equation according to the fundamental inequality \eqref{eq_fundamentalInequality}:
\begin{align} \label{eq_fundamentalInequalityRanges}
    \underset{j=1,\dots,n}{\max}\frac{1}{\|r_j\|^2} \le \frac{1}{d}\sum_{i=1}^n \frac{1}{\|r_i\|^2},
\end{align}
which geometrically means no sensor is much closer to the target than the others.
Conversely, if certain sensors are much closer to the target than the others, then the sequence $\{c_i\}_{i=1}^n$ will be irregular.
The 2D version of inequality \eqref{eq_fundamentalInequalityRanges} has been proposed in \cite{bishop10,dogancy08,Bishop09RSS}.
For range-only sensors, the coefficient is $c_i=1/\sigma_i$. If $\sigma_i=\sigma_j$ for all $i\ne j$, then $c_i=c_j$ and $\{c_i\}_{i=1}^n$ is regular with respect to any $d\le n$.

We next consider an important special case $n=d$, i.e., the sensor number equals to the dimension of the space.
This case is important because the optimal placement will be independent to the coefficients $\{c_i\}_{i=1}^n$ in this case.
The optimal placement in the case of $n=d=2$ has been solved by \cite{bishop10,dogancy08,Bishop09RSS,Sonia06}.

\begin{theorem}\label{theorem_d=n}
    In $\mathbb{R}^d$ with $d=2$ or $3$, if $n=d$, the objective function $\|G\|^2$ satisfies
    \begin{align*}
        \|G\|^2 \ge \sum_{i=1}^d c_i^4.
    \end{align*}
    The lower bound of $\|G\|^2$ is achieved if and only if $\{g_i\}_{i=1}^d$ is an orthogonal basis of $\mathbb{R}^d$.
\end{theorem}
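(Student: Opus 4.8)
The plan is to prove this directly from the frame-potential expression for $\|G\|^2$ derived at the start of this section, specialized to the critical case $n=d$, and to avoid invoking the regular/irregular dichotomy entirely. Recall that with $\varphi_i=c_ig_i$ one has
\begin{align*}
    \|G\|^2=\sum_{i=1}^n\sum_{j=1}^n (c_ic_jg_i^Tg_j)^2.
\end{align*}
The key first step is simply to set $n=d$ and separate the diagonal terms $i=j$ from the off-diagonal terms $i\ne j$.

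For the lower bound, each diagonal term equals $(c_i^2 g_i^Tg_i)^2=c_i^4$ because $\|g_i\|=1$, so the diagonal contributes exactly $\sum_{i=1}^d c_i^4$. Every off-diagonal term $(c_ic_jg_i^Tg_j)^2$ is a square and hence nonnegative, so discarding them yields $\|G\|^2\ge\sum_{i=1}^d c_i^4$. This establishes the inequality with no further work.

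For the equality case, since $\|G\|^2-\sum_{i=1}^d c_i^4=\sum_{i\ne j}(c_ic_jg_i^Tg_j)^2$ is a sum of nonnegative terms, equality holds if and only if every off-diagonal term vanishes. Because $c_i,c_j>0$, this forces $g_i^Tg_j=0$ for all $i\ne j$; that is, $\{g_i\}_{i=1}^d$ are $d$ mutually orthogonal unit vectors in $\mathbb{R}^d$. The concluding step is to observe that any $d$ mutually orthogonal (hence linearly independent) unit vectors in $\mathbb{R}^d$ span the space and therefore form an orthogonal basis, which gives exactly the stated characterization.

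I do not expect a genuine obstacle here; the result is essentially a direct computation. The only points requiring care are the equality direction --- noting that positivity of the coefficients is precisely what lets one deduce orthogonality of the $g_i$ from vanishing inner products --- and the final observation that $d$ orthonormal vectors in $\mathbb{R}^d$ necessarily constitute a basis. It is worth remarking that the distinctive feature of this case, namely that the optimal placement is \emph{independent} of $\{c_i\}_{i=1}^n$, drops out automatically: the coefficients enter only through the additive constant $\sum_{i=1}^d c_i^4$ and never constrain the optimal geometry. One could alternatively derive the theorem as a special case of Theorems \ref{theorem_regularOptimalPlacement} and \ref{theorem_irregularOptimalPlacement} by treating the regular and irregular subcases separately, but the direct argument above is uniform and sidesteps that case analysis.
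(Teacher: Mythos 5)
Your proof is correct and is essentially identical to the paper's own argument: the paper likewise expands $\|G\|^2=\tr(G^2)=\sum_{i,j} c_i^2c_j^2(g_i^Tg_j)^2$, splits off the diagonal contribution $\sum_{i=1}^d c_i^4$, discards the nonnegative off-diagonal terms, and characterizes equality by $g_i^Tg_j=0$ for all $i\ne j$. No gap; your added remarks (positivity of the $c_i$, orthonormal vectors forming a basis) only make explicit what the paper leaves implicit.
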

\begin{proof}
     Since $G=\sum_{i=1}^d c_i^2 g_i g_i^T$ and $g_i^Tg_i=1$ for all $i\in\{1,\dots,n\}$, we have
    \begin{align*}
        \|G\|^2
        &=\tr (G^2) \\
        &=\sum_{i=1}^d\sum_{j=1}^d c_i^2 c_j^2 (g_i^Tg_j)^2 \\
        &=\sum_{i=1}^d\sum_{j=1,j\ne i}^d c_i^2 c_j^2 (g_i^Tg_j)^2 + \sum_{i=1}^d c_i^4 \\
        &\ge \sum_{i=1}^d c_i^4,
    \end{align*}
    where the equality holds if and only if $g_i^Tg_j=0$ for all $i,j\in\{1,\dots,d\}$ and $i\ne j$.
\end{proof}

Theorem \ref{theorem_d=n} can also be proved as a corollary of Theorem \ref{theorem_regularOptimalPlacement} and Theorem \ref{theorem_irregularOptimalPlacement}. But as shown above, we can also directly prove it without using frame theory.
This can be explained from the redundancy point of view.
Recall the constant $n/d$ reflects the redundancy of the system.
There will be no redundancy when $n/d=1$.
Then frames are no longer necessary for the optimality analysis.

\section{Analytical Properties of Optimal Placements}\label{section_analyticalproperties}
In this section, we further explore a number of analytical properties of optimal placements in 2D and 3D. Theorem \ref{theorem_irregularOptimalPlacement} implies that an irregular optimal placement problem can be eventually converted to a regular one in a lower dimensional space. Hence we will only focus on regular optimal placements.

\subsection{Explicit Construction}\label{subsection_explicitconstruction}
The work in \cite{bishop10,dogancy08,Bishop09RSS,Sonia06} has proposed explicit construction methods for some special 2D optimal placements.
The construction of generic optimal placements in 2D or 3D is still an open problem.
From Theorem \ref{theorem_regularOptimalPlacement}, we know a regular optimal placement is essentially a tight frame.
In fact, construction of tight frames in arbitrary dimensions has been extensively studied.
Thus by referring to \cite{dejun2006,Casazza2006,FICKUSPreprint2,Casazza2012}, to name a few, we are able to construct optimal placements with an arbitrary number of sensors and arbitrary but fixed coefficients in 2D and 3D.

We here present a proof of the necessary and sufficient condition for 2D regular optimal placements \emph{without} using frame theory. In the meantime, we propose an explicit construction method for arbitrary 2D regular optimal placements.
The necessary and sufficient condition for 2D optimal placements has already been given in \cite{bishop10,dogancy08,Bishop09RSS}, where the sufficiency, however, is not proved. We will prove the sufficiency by construction.
The following lemma can be found in \cite{bishop10,dogancy08,Bishop09RSS,Sonia06,Benedetto03,Goyal01,FickusDissertation01}.

\begin{lemma}\label{lemma_2DEquivalentProblem}
    In $\mathbb{R}^2$, the unit-length vector $g_i$ can be written as $g_i=[\cos\theta_i, \sin\theta_i]^T$. Then \eqref{eq_RegularOptimalPlacement} is equivalent to
    \begin{align}\label{eq_2DEquivalentProblem}
        \sum_{i=1}^n c_i^2\bar{g}_i=0,
    \end{align}
    where $\bar{g}_i=\left[\cos 2\theta_i, \sin 2\theta_i\right]^T$.
\end{lemma}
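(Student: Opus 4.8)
The plan is to prove the equivalence by a direct computation: substitute $g_i=[\cos\theta_i,\sin\theta_i]^T$ into \eqref{eq_RegularOptimalPlacement} and apply the double-angle identities so that the vector $\bar g_i$ emerges naturally. Since $d=2$ here, \eqref{eq_RegularOptimalPlacement} reads $\sum_{i=1}^n c_i^2 g_ig_i^T=\tfrac12\big(\sum_{i=1}^n c_i^2\big)I_2$, so the entire task is to show that this matrix identity collapses to the scalar conditions encoded by $\sum_{i=1}^n c_i^2\bar g_i=0$.

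First I would expand the rank-one matrix $g_ig_i^T$ and rewrite each entry using $\cos^2\theta_i=(1+\cos2\theta_i)/2$, $\sin^2\theta_i=(1-\cos2\theta_i)/2$, and $\cos\theta_i\sin\theta_i=(\sin2\theta_i)/2$. This produces the decomposition
\[
    g_ig_i^T=\frac12 I_2+\frac12\begin{bmatrix}\cos2\theta_i & \sin2\theta_i\\ \sin2\theta_i & -\cos2\theta_i\end{bmatrix},
\]
that is, an isotropic part plus a symmetric traceless part whose two free entries are exactly the components of $\bar g_i=[\cos2\theta_i,\sin2\theta_i]^T$. Multiplying by $c_i^2$ and summing, the isotropic contributions collect into $\tfrac12\big(\sum_{i=1}^n c_i^2\big)I_2$, which coincides identically with the right-hand side of \eqref{eq_RegularOptimalPlacement}.

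Consequently \eqref{eq_RegularOptimalPlacement} holds if and only if the accumulated traceless part vanishes, namely
\[
    \begin{bmatrix}\sum_{i=1}^n c_i^2\cos2\theta_i & \sum_{i=1}^n c_i^2\sin2\theta_i\\ \sum_{i=1}^n c_i^2\sin2\theta_i & -\sum_{i=1}^n c_i^2\cos2\theta_i\end{bmatrix}=0.
\]
A symmetric $2\times2$ matrix with zero trace vanishes precisely when both its diagonal entry and its off-diagonal entry are zero, so this matrix equation is equivalent to the two scalar equations $\sum_{i=1}^n c_i^2\cos2\theta_i=0$ and $\sum_{i=1}^n c_i^2\sin2\theta_i=0$, which together are exactly the vector condition $\sum_{i=1}^n c_i^2\bar g_i=0$ of \eqref{eq_2DEquivalentProblem}. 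This gives the claimed equivalence in both directions.

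There is no genuine obstacle in this argument; its content is the double-angle bookkeeping together with the observation that the isotropic component cancels against the right-hand side of \eqref{eq_RegularOptimalPlacement}, leaving only the traceless part. The one point I would state with care is the final bi-implication, namely that the traceless symmetric $2\times2$ matrix is zero iff both of its independent entries vanish, since it is precisely this fact that upgrades the matrix identity \eqref{eq_RegularOptimalPlacement} into the simpler two-dimensional vector equation \eqref{eq_2DEquivalentProblem} in the doubled angles.
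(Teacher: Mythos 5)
Your proof is correct and follows essentially the same route as the paper: substituting $g_i=[\cos\theta_i,\sin\theta_i]^T$, applying the double-angle identities so that \eqref{eq_RegularOptimalPlacement} reduces to the vanishing of the summed traceless matrix $\sum_{i=1}^n c_i^2\bigl[\begin{smallmatrix}\cos2\theta_i & \sin2\theta_i\\ \sin2\theta_i & -\cos2\theta_i\end{smallmatrix}\bigr]$, which is equivalent to \eqref{eq_2DEquivalentProblem}. The paper's proof is just a terser version of the same computation; your explicit isotropic-plus-traceless decomposition and the closing remark on why the matrix equation collapses to two scalar equations are fine elaborations of it.
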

\begin{proof}
     Substituting $g_i=[\cos\theta_i, \sin\theta_i]^T$ into \eqref{eq_RegularOptimalPlacement} gives
     \begin{align*}
        \sum_{i=1}^n c_i^2 \left[
                             \begin{array}{cc}
                               \frac{1}{2} \cos 2\theta_i   &   \frac{1}{2} \sin 2\theta_i \\
                               \frac{1}{2} \sin 2\theta_i   &   -\frac{1}{2} \cos 2\theta_i \\
                             \end{array}
                           \right]=0,
    \end{align*}
    which is equivalent to \eqref{eq_2DEquivalentProblem}.
\end{proof}

By Lemma \ref{lemma_2DEquivalentProblem}, the matrix equation \eqref{eq_RegularOptimalPlacement} is simplified to a vector equation \eqref{eq_2DEquivalentProblem}.
In order to construct $\{g_i\}_{i=1}^n$ solving \eqref{eq_RegularOptimalPlacement}, we can first construct $\{\bar{g}_i\}_{i=1}^n$ solving \eqref{eq_2DEquivalentProblem}.
Once $\bar{g}_i=[\cos2\theta_i, \sin2\theta_i]^T$ is obtained, $g_i$ can be retrieved as $g_i=\pm[\cos\theta_i, \sin\theta_i]^T$. Note the sign changes of $g_i$ give equivalent optimal placement as mentioned in Definition \ref{definition_equivalence}.

\begin{theorem}\label{theorem_2DNecessaryandSufficient}
    In $\mathbb{R}^2$, given a positive sequence $\{c_i\}_{i=1}^n$, there exists $\{\bar{g}_i\}_{i=1}^n$ with $\|\bar{g}_i\|=1$ solving \eqref{eq_2DEquivalentProblem} if and only if
    \begin{align}\label{eq_fundamentalInequalityFor2D}
        \max_{j=1,\dots, n} c_j^2\le\frac{1}{2}\sum_{i=1}^n c_i^2.
    \end{align}
\end{theorem}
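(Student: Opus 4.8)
The plan is to recognize \eqref{eq_2DEquivalentProblem} as a \emph{polygon-closing} problem. Writing $w_i := c_i^2 > 0$, the equation $\sum_{i=1}^n w_i \bar g_i = 0$ with each $\|\bar g_i\| = 1$ asks whether $n$ planar vectors of prescribed lengths $w_i$ and freely chosen directions can be made to sum to zero; equivalently, whether segments of lengths $w_1,\dots,w_n$ can be laid head-to-tail into a closed (possibly degenerate) polygon. From this viewpoint the bound \eqref{eq_fundamentalInequalityFor2D}, i.e.\ $\max_j w_j \le \tfrac12\sum_i w_i$, is exactly the generalized triangle inequality $\max_j w_j \le \sum_{i\ne j} w_i$, so the theorem becomes the statement that a polygon with given side lengths exists if and only if no side exceeds the sum of the others.

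Necessity is immediate. If $\sum_{i=1}^n w_i\bar g_i = 0$, then for each fixed $j$ we have $w_j\bar g_j = -\sum_{i\ne j} w_i\bar g_i$, and taking norms with the triangle inequality and $\|\bar g_i\|=1$ gives
\[
  w_j = \Big\|\sum_{i\ne j} w_i\bar g_i\Big\| \le \sum_{i\ne j} w_i = \sum_{i=1}^n w_i - w_j .
\]
Hence $2w_j \le \sum_i w_i$ for every $j$, which is \eqref{eq_fundamentalInequalityFor2D} after substituting $w_i = c_i^2$.

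For sufficiency I would argue by an explicit construction, inducting on $n$ after relabeling so that $w_1 \ge \dots \ge w_n$ (a permutation only produces an equivalent placement). For $n=2$ the hypothesis forces $w_1 = w_2$, and $\bar g_2 = -\bar g_1$ solves it; for $n=3$ the hypothesis is the ordinary triangle inequality, so a triangle with side lengths $w_1,w_2,w_3$ exists (its interior angles being fixed by the law of cosines) and its three edge vectors, traversed around the boundary, sum to zero. For the inductive step with $n \ge 4$, I would \emph{merge} the two smallest weights into a single weight $w' := w_{n-1} + w_n$, obtaining an $(n-1)$-term sequence with the same total $T=\sum_i w_i$. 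The key point is that this reduced sequence still satisfies the fundamental inequality: because $n \ge 4$ leaves at least the two largest weights untouched, $\sum_{i=1}^{n-2} w_i \ge w_1 + w_2 \ge w_{n-1}+w_n = w'$, so $w' \le T/2$, while $w_1 \le T/2$ holds by hypothesis; hence the new maximum is at most $T/2$. By induction the reduced problem has a solution, and I recover a solution of the original by \emph{splitting} the merged vector, i.e.\ assigning $\bar g_{n-1} = \bar g_n$ equal to the merged direction $\bar g'$, so that $w_{n-1}\bar g_{n-1} + w_n\bar g_n = w'\bar g'$.

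I expect the only real obstacle to be the bookkeeping in the inductive step, specifically verifying that merging does not create a new dominant weight. This is exactly why the argument needs two separate base cases $n=2$ and $n=3$: the estimate $w' \le T/2$ requires at least two large weights to survive the merge, which fails precisely at $n=3$, where the triangle-inequality base case takes over. Once $\{\bar g_i\}_{i=1}^n$ is constructed, Lemma \ref{lemma_2DEquivalentProblem} lets me retrieve $g_i = \pm[\cos\theta_i,\sin\theta_i]^T$ from $\bar g_i=[\cos2\theta_i,\sin2\theta_i]^T$, and the sign ambiguity only yields equivalent placements in the sense of Definition \ref{definition_equivalence}.
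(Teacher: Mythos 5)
Your necessity argument coincides with the paper's: both isolate $c_j^2\bar g_j=-\sum_{i\ne j}c_i^2\bar g_i$, take norms, and apply the triangle inequality. For sufficiency you take a genuinely different route. The paper is non-inductive: working with the sequence in its given order, it locates the first index $n_0$ at which the partial sums of $c_i^2$ cross $\tfrac{1}{2}\sum_{i=1}^n c_i^2$ (inequalities \eqref{eq_2DTrangleEquation1}--\eqref{eq_2DTrangleEquation2}), lumps the weights into the three block sums $\ell_1,\ell_2,\ell_3$ of \eqref{eq_l1l2l3}, checks that these satisfy the triangle inequality, and aligns all $\bar g_i$ within a block with the corresponding side of the resulting (possibly degenerate) triangle; the boundary case $n_0=n$, where $\ell_3=0$ and $\ell_1=\ell_2$, is treated separately and plays exactly the role of your $n=2$ base case. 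Your merge-and-split induction --- sort, replace the two smallest weights $w_{n-1},w_n$ by $w'=w_{n-1}+w_n$, verify the fundamental inequality survives via $w'\le w_1+w_2\le\sum_{i=1}^{n-2}w_i$ (which is where $n\ge4$ is genuinely needed), recurse, then split the merged vector --- is correct, including the bookkeeping that forces the two separate base cases. Unwound, your recursion also produces a three-direction solution (every $\bar g_i$ ends up parallel to a side of the terminal triangle), so the two proofs construct solutions of the same shape, just with different groupings of indices: consecutive blocks in the paper versus groups formed by iterated merging of smallest pairs in yours. What each buys: the paper's one-shot partition needs no sorting and immediately yields the explicit closed-form construction of Algorithm \ref{corrollary_construction2D}, with directions given by the interior angles $\alpha_{12},\alpha_{13}$ of the $(\ell_1,\ell_2,\ell_3)$ triangle, which the paper reuses in its examples; your argument makes the polygon-closing interpretation explicit --- a closed planar polygon with prescribed side lengths exists iff no side exceeds the sum of the others --- which is conceptually cleaner, makes necessity transparent, and generalizes verbatim beyond this setting, at the cost of delivering the solution only through a recursion rather than in closed form.
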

\begin{proof}
    Necessity: If $\sum_{i=1}^n c_i^2\bar{g}_i=0$, then $c_j^2\bar{g}_j=\sum_{i\ne j} c_i^2\bar{g}_i$ for all $j\in\{1,\dots, n\}$.
    Hence $c_j^2=\|c_j^2\bar{g}_j\|=\|\sum_{i\ne j} c_i^2\bar{g}_i\| \le \sum_{i\ne j} \|c_i^2\bar{g}_i\| = \sum_{i\ne j} c_i^2$.
    Then adding $c_j^2$ on both sides of the inequality gives $2c_j^2\le\sum_{i=1}^n c_i^2$.

    Sufficiency: If $c_j^2\le 1/2 \sum_{i=1}^n c_i^2$ for all $j\in\{1,\dots, n\}$, it is obvious that there always exists an index $n_0$ ($2\le n_0\le n$) such that
    \begin{align}
        \label{eq_2DTrangleEquation1} c_1^2+\dots+c_{n_0-1}^2 \le \frac{1}{2}\sum_{i=1}^n c_i^2,\\
        \label{eq_2DTrangleEquation2} c_1^2+\dots+c_{n_0-1}^2+c_{n_0}^2 \ge \frac{1}{2}\sum_{i=1}^n c_i^2.
    \end{align}
    When $n_0<n$, denote
    \begin{align}\label{eq_l1l2l3}
        \ell_1 &= c_1^2+\dots+c_{n_0-1}^2,\nonumber\\
        \ell_2 &= c_{n_0}^2, \nonumber\\
        \ell_3 &= c_{n_0+1}^2+\dots+c_n^2.
    \end{align}
    Obviously $\ell_1+\ell_2+\ell_3=\sum_{i=1}^n c_i^2$.
    From \eqref{eq_fundamentalInequalityFor2D}, $c_{n_0}\le1/2\sum_{i=1}^n c_i^2$ and hence $\ell_1+\ell_3\ge \ell_2$.
    From \eqref{eq_2DTrangleEquation1}, $\ell_1\le1/2\sum_{i=1}^n c_i^2$ and hence $\ell_2+\ell_3\ge \ell_1$.
    From \eqref{eq_2DTrangleEquation2}, $\ell_1+\ell_2\ge1/2\sum_{i=1}^n c_i^2$ and hence $\ell_1+\ell_2\ge \ell_3$.
    Therefore, $\ell_1$, $\ell_2$ and $\ell_3$ satisfy the triangle inequality and can form a triangle.
    Choose $\bar{g}_1=\dots=\bar{g}_{n_0-1}$. Then $\sum_{i=1}^{n_0-1} c_i^2\bar{g}_i=\ell_1\bar{g}_1$.
    Choose $\bar{g}_{n_0+1}=\dots=\bar{g}_{n}$. Then $\sum_{i=n_0+1}^{n} c_i^2\bar{g}_i=\ell_3\bar{g}_n$.
    Then \eqref{eq_2DEquivalentProblem} becomes
    \begin{align}\label{eq_triangleEuqation}
        \ell_1\bar{g}_1+\ell_2\bar{g}_{n_0}+\ell_3\bar{g}_n=0.
    \end{align}
    We can choose $\bar{g}_1, \bar{g}_{n_0}$ and $\bar{g}_n$ that align with the three sides of the triangle with side length as $\ell_1$, $\ell_2$ and $\ell_3$, respectively (see Figure~\ref{fig_2DConstruction}). Then \eqref{eq_triangleEuqation} and consequently \eqref{eq_2DEquivalentProblem} can be solved.
    When $n_0=n$, the above proof is still valid. In this case, we have $\ell_3=0$ and $\ell_1=\ell_2$, and \eqref{eq_triangleEuqation} becomes $\bar{g}_1+\bar{g}_{n_0}=0$.
\end{proof}

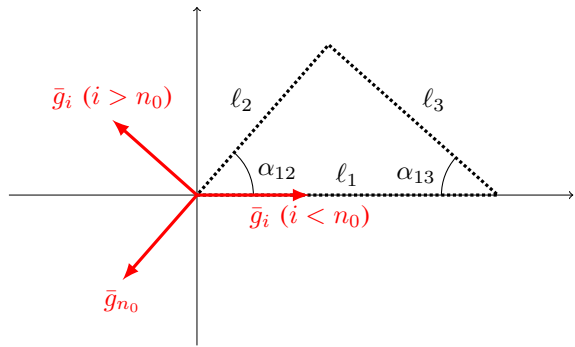
\begin{figure}
      \centering
      \def\myScale{0.5}
      \begin{tikzpicture}[scale=\myScale]
    \draw[->, very thin] (-5,0)--(10,0);
    \draw[->, very thin] (0,-4)--(0,5);

    \coordinate (z1) at (0,0);
    \coordinate (z2) at (8,0);
    \coordinate (z3) at (3.5,4);
    \draw [densely dotted, very thick] (z1)--(z2)--(z3)--cycle;
    \draw[] let \p1=(z1), \p2=(z2), \p3=(z3) in
                        (\x1/2+\x2/2,\y1/2+\y2/2) node[above] {$\ell_1$};
    \draw[] let \p1=(z1), \p2=(z2), \p3=(z3) in
                        (\x2/2+\x3/2,\y2/2+\y3/2) node[above right] {$\ell_3$};
    \draw[] let \p1=(z1), \p2=(z2), \p3=(z3) in
                        (\x1/2+\x3/2,\y1/2+\y3/2) node[above left] {$\ell_2$};

    \def\angleRadius{1.5cm}
    \draw[] let \p1=(z1), \p2=(z2), \p3=(z3), \n1={atan2(\x2-\x1,\y2-\y1)}, \n2={atan2(\x3-\x1,\y3-\y1)}  in
                        ($(\p1)!\angleRadius!(\p2)$) arc (\n1:\n2:\angleRadius);
    \draw[] let \p1=(z1), \p2=(z2), \p3=(z3), \n1={atan2(\x2-\x1,\y2-\y1)}, \n2={atan2(\x3-\x1,\y3-\y1)} in
                        (\p1)+(\n1/2+\n2/2:\angleRadius) node[right] {$\alpha_{12}$};
    \draw[] let \p1=(z2), \p2=(z1), \p3=(z3), \n1={atan2(\x2-\x1,\y2-\y1)}, \n2={atan2(\x3-\x1,\y3-\y1)}  in
                        ($(\p1)!\angleRadius!(\p2)$) arc (\n1:\n2:\angleRadius);
    \draw[] let \p1=(z2), \p2=(z1), \p3=(z3), \n1={atan2(\x2-\x1,\y2-\y1)}, \n2={atan2(\x3-\x1,\y3-\y1)} in
                        (\p1)+(\n1/2+\n2/2:\angleRadius) node[left] {$\alpha_{13}$};

    \def\unitlength{3cm}
    \draw[->, >=latex, very thick, red] (z1) -- ($(z1)!\unitlength!(z2)$) node [below] {$\bar{g}_i$ ($i<n_0$)};
    \draw[->, >=latex, very thick, red] let \p1=(z1), \p2=(z2), \p3=($(z2)!\unitlength!(z3)$) in
        (\p1) -- (\x3+\x1-\x2,\y3+\y1-\y2) node [above] {$\bar{g}_i$ ($i>n_0$)};
    \draw[->, >=latex, very thick, red] let \p1=(z1), \p2=(z3), \p3=($(z3)!\unitlength!(z1)$) in
        (\p1) -- (\x3+\x1-\x2,\y3+\y1-\y2) node [below] {$\bar{g}_{n_0}$};

\end{tikzpicture}
      \caption{Geometric illustration for the 2D construction.}
      \label{fig_2DConstruction}
\end{figure}

\begin{algorithm}[t]
\caption{Construction of 2D regular optimal placements $\{g_i\}_{i=1}^n$ with coefficients $\{c_i\}_{i=1}^n$.}\label{corrollary_construction2D}
    \begin{algorithmic}[1] 
        \State Choose $n_0$ satisfying \eqref{eq_2DTrangleEquation1} and \eqref{eq_2DTrangleEquation2}. Then compute $\ell_1$, $\ell_2$ and $\ell_3$ in \eqref{eq_l1l2l3}.
        \State Compute interior angles $\alpha_{12}$ and $\alpha_{13}$ of the triangle with side lengths as $\ell_1$, $\ell_2$ and $\ell_3$ (See Figure \ref{fig_2DConstruction}).
        \State Choose $g_i=[1,0]^T$ for $i\in\{1,\dots,n_0-1\}$, $g_{n_0}=\left[\cos((\pi+\alpha_{12})/2), \sin((\pi+\alpha_{12})/2)\right]^T$, and $g_{i}=\left[\cos((\pi-\alpha_{13})/2), \sin((\pi-\alpha_{13})/2)\right]^T$ for $i\in\{n_0+1,\dots,n\}$.
    \end{algorithmic}
\end{algorithm}

From the proof of Theorem \ref{theorem_2DNecessaryandSufficient}, an explicit construction of 2D regular optimal placements can be summarized in Algorithm \ref{corrollary_construction2D}.
The following example illustrates the construction method in Algorithm \ref{corrollary_construction2D}.

\begin{example}\label{example_2DConstruction}
    In $\mathbb{R}^2$, given six bearing-only sensors with sensor-target ranges respectively as $\|r_1\|=5$, $\|r_2\|=6$, $\|r_3\|=7$, $\|r_4\|=8$, $\|r_5\|=9$, and $\|r_6\|=10$.
    The measurement noise variance is $\sigma_i=1$ for all $i\in\{1,\dots,6\}$. Recall $c_i=1/(\sigma_i\|r_i\|)$ for bearing-only sensors.
    Then $c_1^2=0.0400$, $c_2^2=0.0278$, $c_3^2=0.0204$, $c_4^2=0.0156$, $c_5^2=0.0123$, $c_6^2=0.0100$, and $1/2\sum_{i=1}^6 c_i^2=0.0631$.
    It is easy to check the sequence $\{c_i\}_{i=1}^6$ is regular.
    Because $c_1^2<1/2\sum_{i=1}^6 c_i^2$ and $c_1^2+c_2^2>1/2\sum_{i=1}^6 c_i^2$, choose $n_0=2$.
    Hence $\ell_1=0.0400$, $\ell_2=0.0278$, and $\ell_3=0.0584$. Then $\alpha_{12}=2.0560\,\mathrm{rad}$ and $\alpha_{13}=0.4344\,\mathrm{rad}$. As instructed in Algorithm \ref{corrollary_construction2D}, choose $g_1=[1, 0]^T, g_2=[0.8563, -0.5165]^T, g_3=\dots=g_6=[0.2155, 0.9765]^T$. It can be verified that $\sum_{i=1}^6 c_i^2g_ig_i^T=1/2\sum_{i=1}^6 c_i^2 I_2$.
\end{example}


\subsection{Equally-weighted Optimal Placements}\label{subsection_equallyweightedplacement}
The coefficient $c_i$ actually is the weight of sensor $i$. A placement is called \emph{equally-weighted} if $c_1=\dots=c_n$.
Since $\{c_i\}_{i=1}^n$ is regular with respect to any $d\le n$ if $c_1=\dots=c_n$, equally-weighted placements must be regular.
For bearing-only or RSS sensors, equally-weighted means $\sigma_i=\sigma_j$ and $\|r_i\|=\|r_j\|$ for all $i\ne j$ as $c_i=1/(\sigma_i\|r_i\|)$.
The corresponding geometry is that all sensors are restricted on a 2D circle or a 3D sphere centered at the target.
For range-only sensors, equally-weighted means $\sigma_i=\sigma_j$ for all $i\ne j$ as $c_i=1/\sigma_i$.

Equally-weighted placements are important because they often arise in practice and have some important special properties.
In the equally-weighted case, \eqref{eq_RegularOptimalPlacement} is simplified to $\sum_{i=1}^n g_ig_i^T={n}/{d}I_d$,
which implies that an equally-weighted optimal placement is essentially a unit-norm tight frame \cite{Benedetto03,FickusDissertation01}. 
In $\mathbb{R}^2$, an equally-weighted placement is optimal if $n$ ($n\ge3$) sensors are located at the vertices of an $n$-side regular polygon \cite{Benedetto03,FickusDissertation01,bishop10,dogancy08,Bishop09RSS,Sonia06} as shown in Figure \ref{fig_regularpolygon}.
In $\mathbb{R}^3$, an equally-weighted placement is optimal if $n$ sensors are located at the vertices of a Platonic solid \cite{Benedetto03,FickusDissertation01}. There are only five Platonic solids as shown in Figure \ref{fig_platonicsolids}.
It should be noted that equally-weighted optimal placements are not limited to regular polygons or Platonic solids.
In Section \ref{subsection_distributedconstruction} we will show more examples of equally-weighted optimal placements.

\begin{figure}
  \centering
  \subfloat[]{\includegraphics[width=0.2\linewidth]{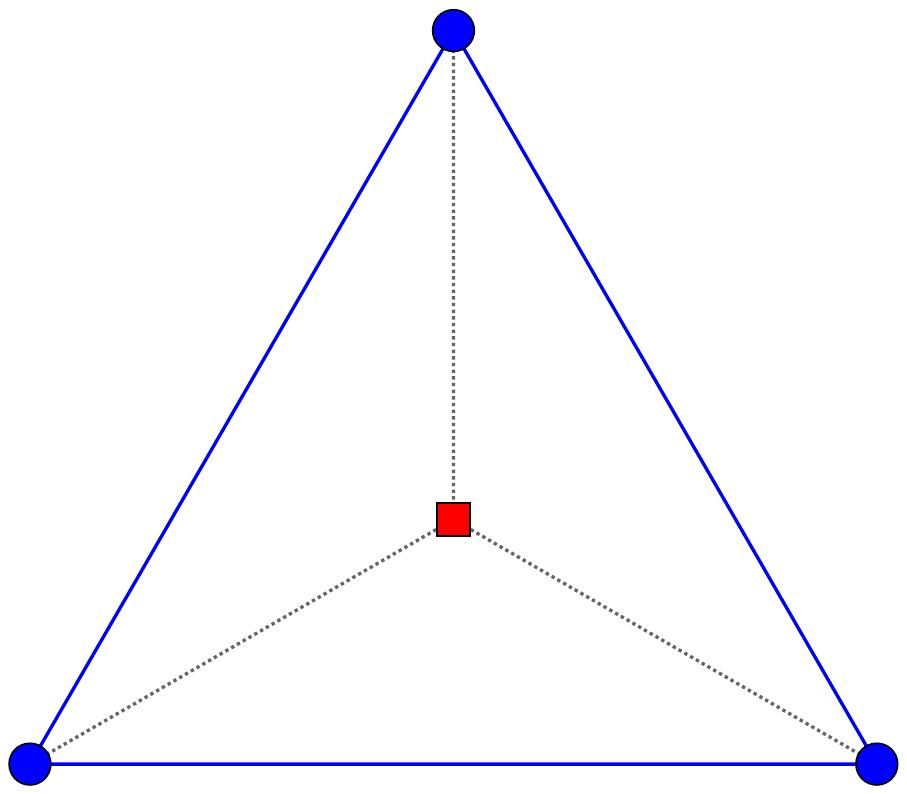}}
  \subfloat[]{\includegraphics[width=0.2\linewidth]{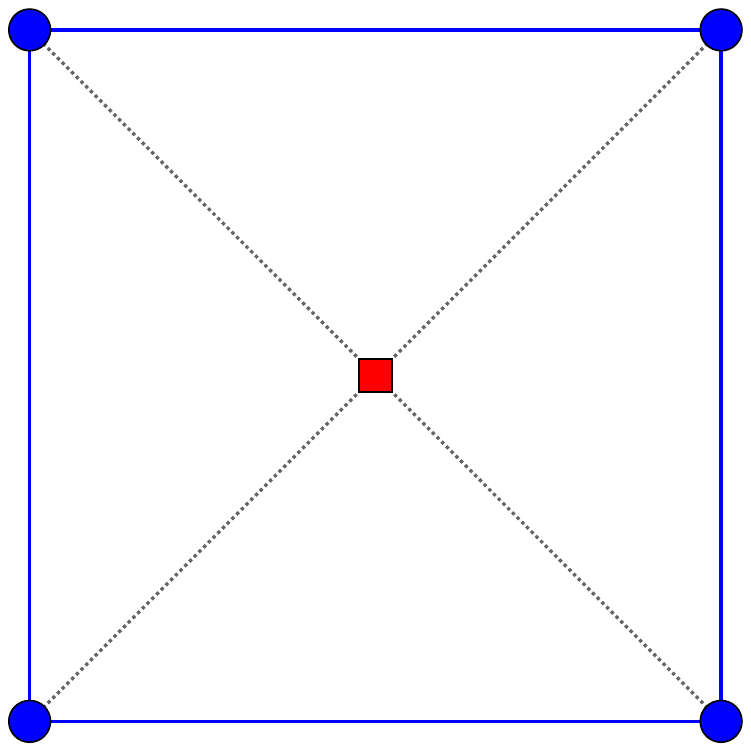}}
  \subfloat[]{\includegraphics[width=0.2\linewidth]{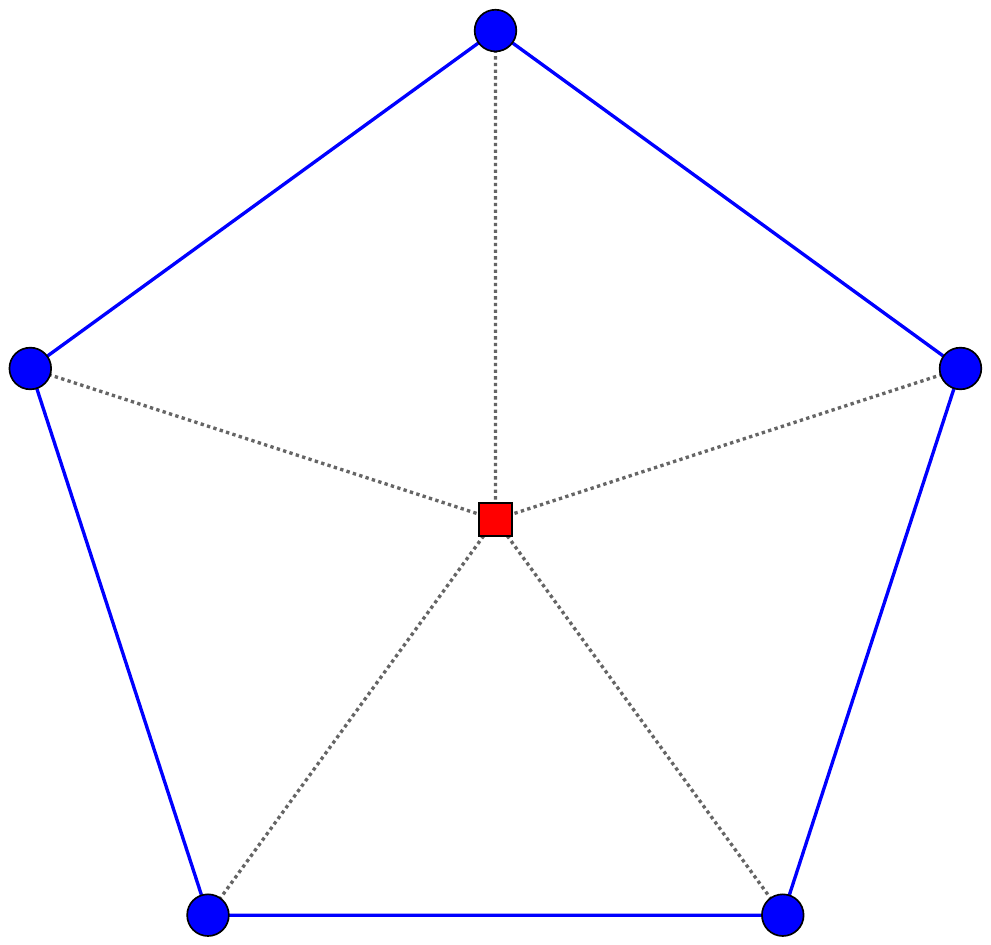}}
  \subfloat[]{\includegraphics[width=0.2\linewidth]{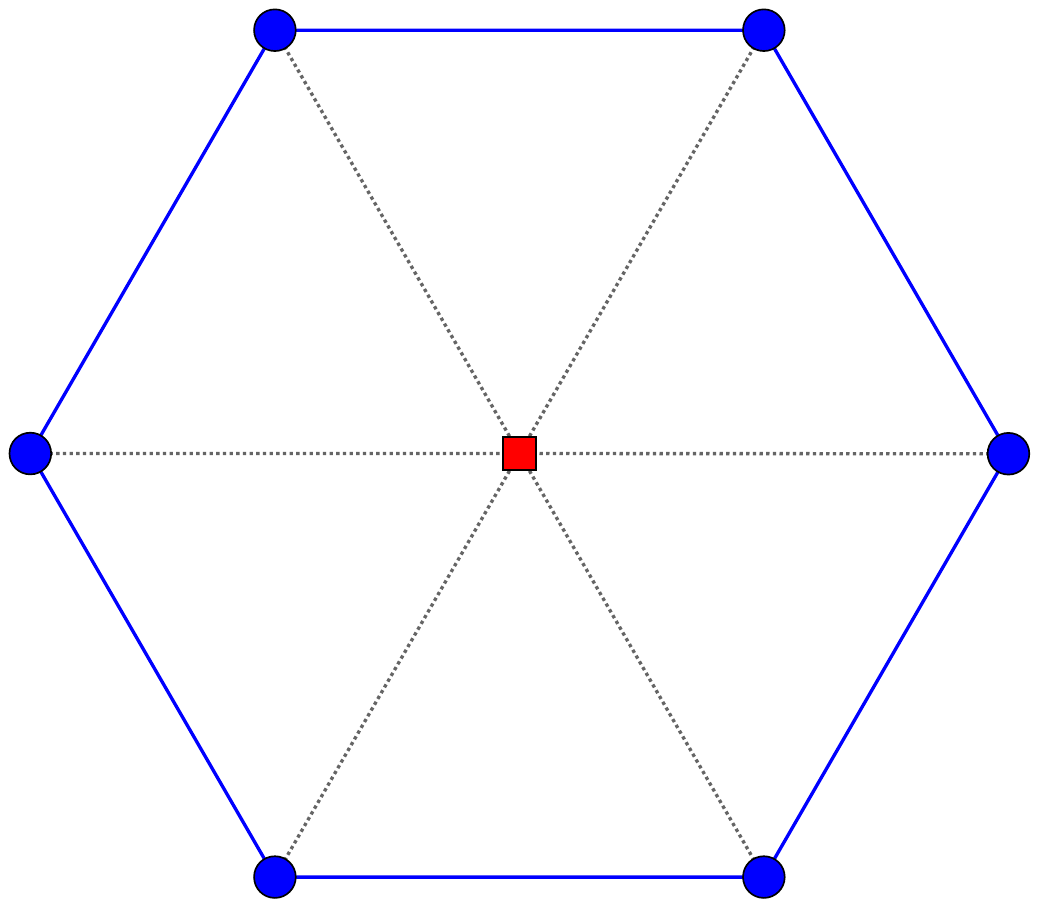}}
  \subfloat[]{\includegraphics[width=0.2\linewidth]{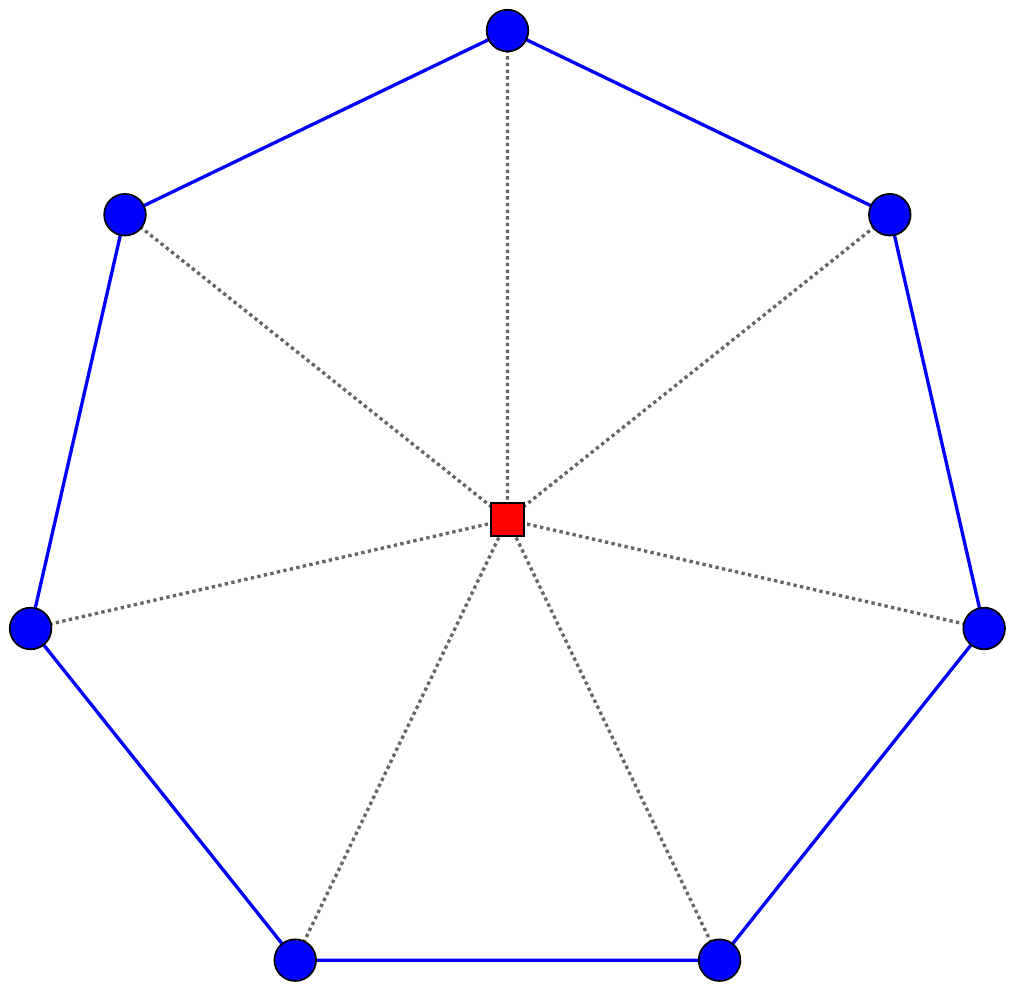}}
  \caption{Examples of 2D equally-weighted optimal placements: regular polygons. Red square: target; blue dots: sensors.}
  \label{fig_regularpolygon}
\end{figure}
\begin{figure}
  \centering
  \subfloat[]{\includegraphics[width=0.2\linewidth]{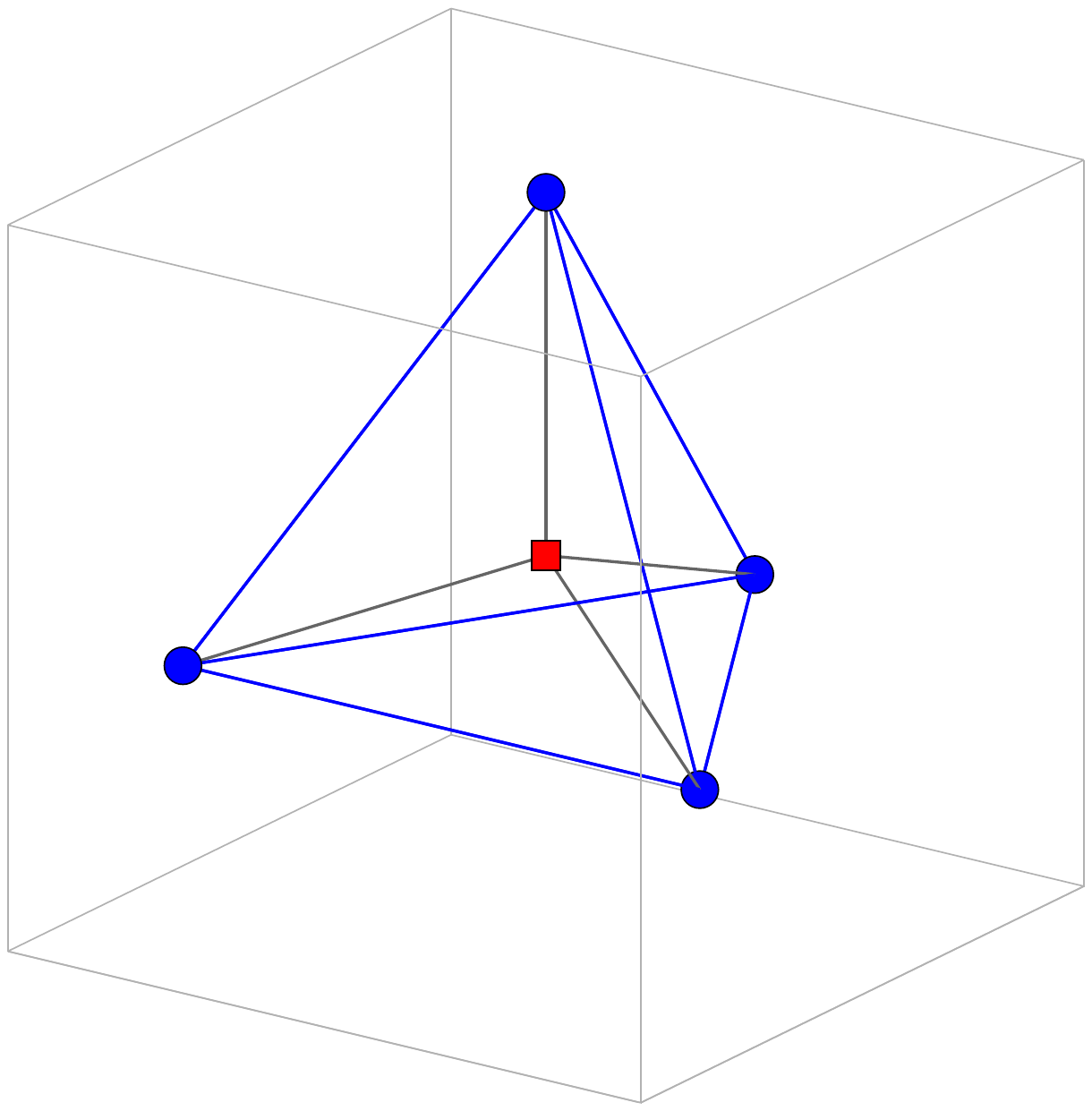}}
  \subfloat[]{\includegraphics[width=0.2\linewidth]{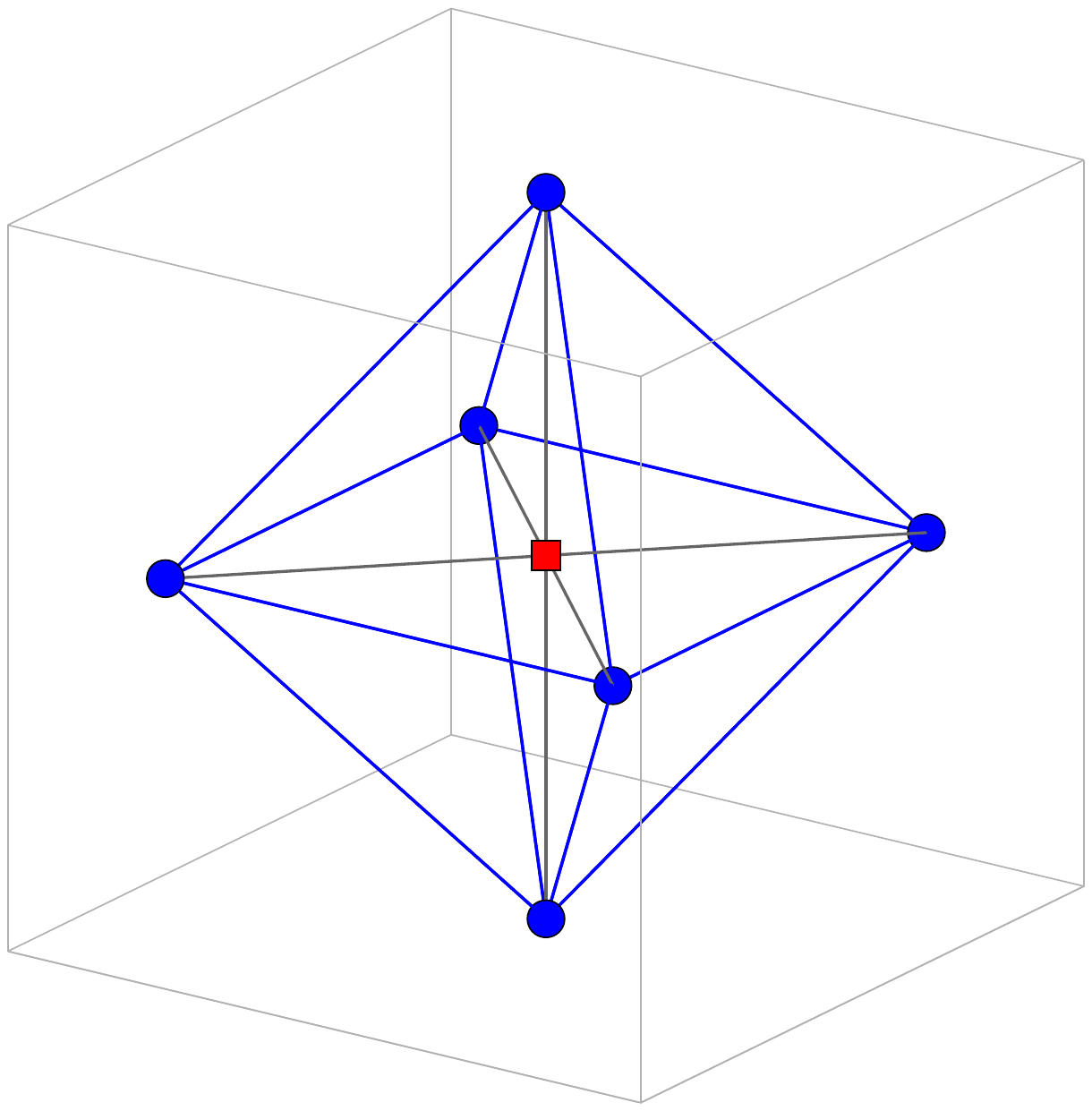}}
  \subfloat[]{\includegraphics[width=0.2\linewidth]{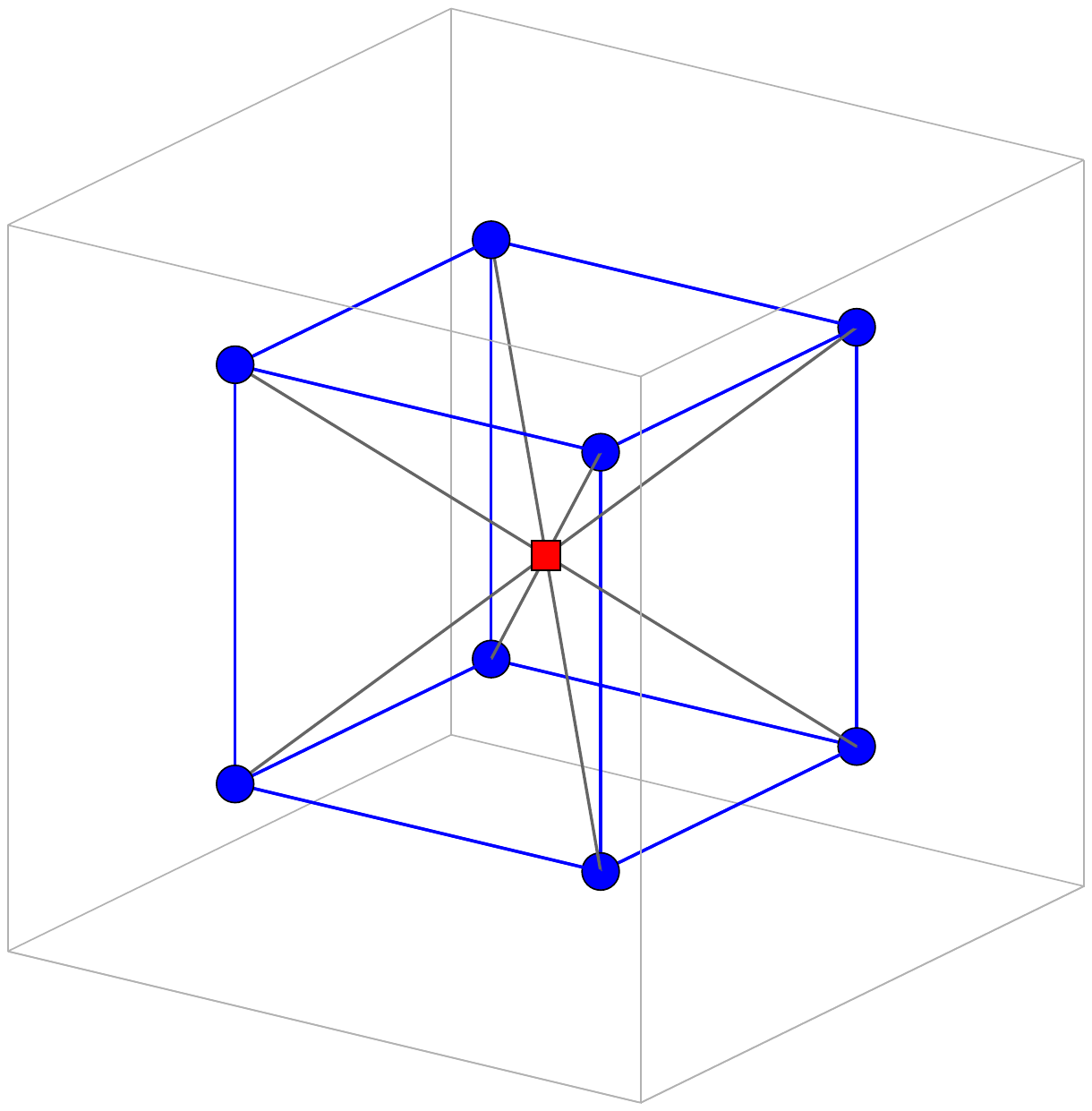}}
  \subfloat[]{\includegraphics[width=0.2\linewidth]{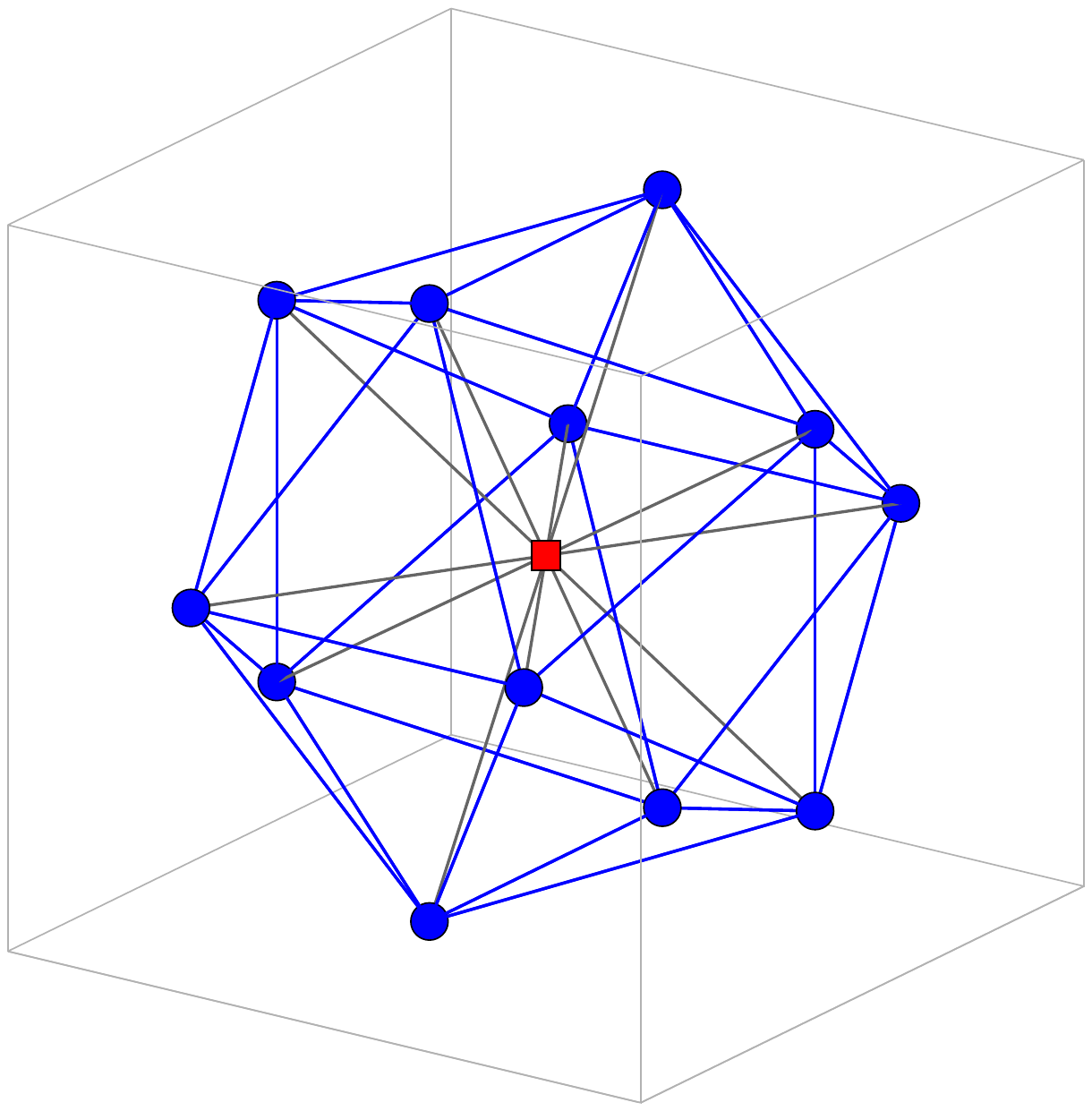}}
  \subfloat[]{\includegraphics[width=0.2\linewidth]{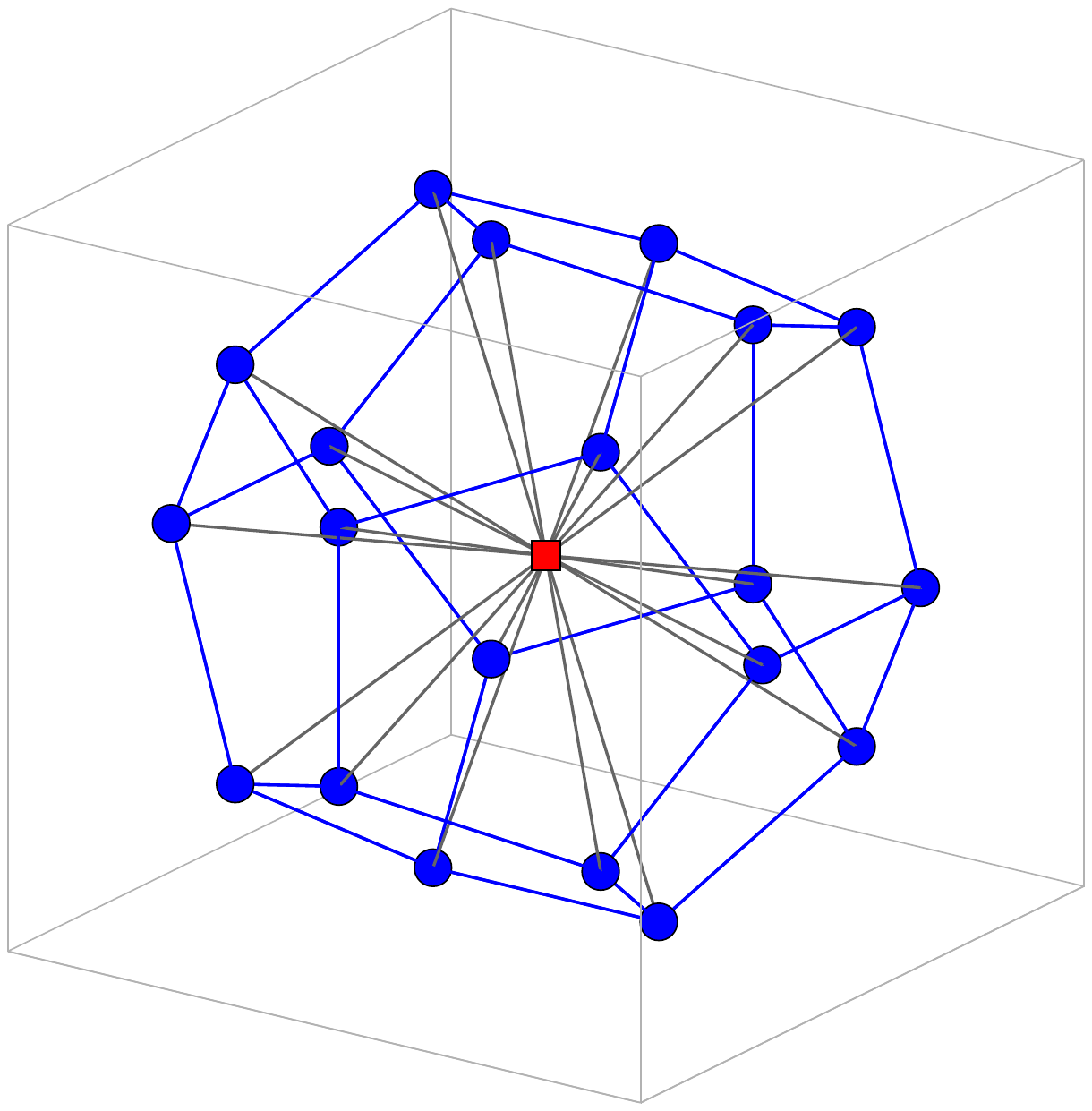}}
  \caption{Examples of 3D equally-weighted optimal placements: Platonic solids. Red square: target; blue dots: sensors. (a) Tetrahedron, $n=4$. (b) Octahedron, $n=6$. (c) Hexahedron, $n=8$. (d) Icosahedron, $n=12$. (e) Dodecahedron, $n=20$.}
  \label{fig_platonicsolids}
\end{figure}

\subsection{Uniqueness}\label{subsection_uniqueness}
Due to placement equivalence, there exist at least an infinite number of equivalent optimal placements minimizing $\|G\|^2$.
It is interesting to ask whether all optimal placements are mutually equivalent.
Now we give the condition under which all optimal placements are mutually equivalent, or in other words, the optimal placement is \emph{unique} up to the equivalence. Again we only consider regular cases.

According to Theorem \ref{theorem_d=n}, it is clear that the optimal placement is unique in the case of $n=d$.
We next show the regular optimal placement is also unique in the case of $n=d+1$ (i.e., three sensors in $\mathbb{R}^2$ or four sensors in $\mathbb{R}^3$).
The uniqueness will be proved by construction, which is inspired by the work in \cite{Goyal01} on unit-norm tight frames.

\begin{theorem}\label{theorem_uniquness}
    In $\mathbb{R}^d$ with $d=2$ or $3$, if $n=d+1$, given a regular coefficient sequence $\{c_i\}_{i=1}^{d+1}$, the regular optimal placement $\{g_i\}_{i=1}^{d+1}$ is unique up to the equivalence in Definition \ref{definition_equivalence}.
\end{theorem}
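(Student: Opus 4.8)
The plan is to exploit the fact that, with only one vector more than a basis, a regular optimal placement is extremely rigid: I will show that its Gram matrix is completely pinned down by the coefficient sequence (up to the sign ambiguities already built into Definition~\ref{definition_equivalence}), and then invoke the standard fact that a spanning configuration of vectors is determined by its Gram matrix up to a global orthogonal transformation.

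First I would pass to the frame picture of Theorem~\ref{theorem_regularOptimalPlacement}. Writing $\varphi_i=c_ig_i$ and $\Phi=[\varphi_1,\dots,\varphi_{d+1}]\in\mathbb{R}^{d\times(d+1)}$, a regular optimal placement is exactly a tight frame, $\Phi\Phi^T=aI_d$ with $a=\frac1d\sum_{i=1}^{d+1}c_i^2$. Since $\Phi$ has rank $d$ and $d+1$ columns, its kernel is one-dimensional; the Gram matrix $\Phi^T\Phi$ therefore has eigenvalue $a$ with multiplicity $d$ and eigenvalue $0$ with multiplicity one, so
\begin{align*}
    \Phi^T\Phi = a\left(I_{d+1}-\hat u\hat u^T\right),
\end{align*}
where $\hat u$ is a unit vector spanning $\ker\Phi$. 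Reading off the diagonal gives $c_i^2=a(1-\hat u_i^2)$, i.e. $\hat u_i^2=1-c_i^2/a$. Thus the magnitudes $|\hat u_i|$, and hence the entire Gram matrix apart from the signs of the entries of $\hat u$, are determined solely by $\{c_i\}_{i=1}^{d+1}$. Regularity, via the fundamental inequality~\eqref{eq_fundamentalInequality}, is precisely what guarantees $c_i^2\le a$ so that each $\hat u_i^2$ is nonnegative, and one checks $\sum_i\hat u_i^2=1$ automatically.

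Next I would account for the remaining sign freedom. Given two regular optimal placements with the same coefficients, their null vectors satisfy $\hat u_i'=\pm\hat u_i$, so $\hat u'=D\hat u$ for a diagonal sign matrix $D$. The observation that drives the proof is that flipping $g_i\mapsto -g_i$ (an allowed equivalence) flips precisely the $i$th entry of the null vector, since $\sum_i\hat u_ic_ig_i=0$. Hence, after flipping the appropriate vectors of the second placement, I may assume $\hat u'=\hat u$, so that the two frames now share the identical Gram matrix $a(I_{d+1}-\hat u\hat u^T)$. Because both frames are full rank, they are then related by a global orthogonal matrix $U$: $\varphi_i'=U\varphi_i$, and dividing by $c_i>0$ gives $g_i'=Ug_i$. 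Combining the sign flips with $U$ shows the two placements are equivalent in the sense of Definition~\ref{definition_equivalence}.

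The main obstacle, I expect, is the bookkeeping in the sign step: making sure the $2^{d+1}$ sign choices for $\hat u$ correspond exactly to the sign-flip equivalences of the placement, with nothing extra creeping in. Everything else is standard, namely the nullity-one observation together with the lemma that a spanning configuration is determined by its Gram matrix up to an orthogonal transformation, which can be proved by a short Cholesky/QR argument and is the place where the construction inspired by \cite{Goyal01} enters. As a sanity check I would verify the $d=2$, $n=3$ case directly against Lemma~\ref{lemma_2DEquivalentProblem}: there \eqref{eq_2DEquivalentProblem} says the weighted vectors $c_i^2\bar g_i$ close up into a triangle with prescribed side lengths $c_1^2,c_2^2,c_3^2$, which by SSS congruence is unique up to rigid motion and reflection, exactly the claimed uniqueness.
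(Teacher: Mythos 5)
Your proof is correct and follows essentially the same route as the paper's: your unit null vector $\hat u$ is exactly the paper's augmenting vector $x$ (up to the scale $\sqrt{a}$), with entry magnitudes $\sqrt{1-c_j^2/a}$ pinned down by the coefficients, the sign ambiguity identified with sensor flips, and the residual freedom shown to be a global orthogonal transformation. The only packaging difference is that you conclude via the standard Gram-matrix rigidity fact ($\Phi^T\Phi=\Phi'^T\Phi'$ with full row rank implies $\Phi'=U\Phi$ for orthogonal $U$), whereas the paper derives the same conclusion by an explicit block decomposition of the orthogonal matrix relating the two augmented $(d+1)\times(d+1)$ matrices.
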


\begin{proof}
    Suppose $\{g_i\}_{i=1}^{d+1}$ is a regular optimal placement solving \eqref{eq_RegularOptimalPlacement}.
    Denote $\varphi_i=c_ig_i$ and $\Phi=\left[\varphi_1, \dots,\varphi_{d+1}\right]\in\mathbb{R}^{d\times (d+1)}$. Then \eqref{eq_RegularOptimalPlacement} can be written in matrix form as
    $    \Phi\Phi^T={1}/{d}\sum_{i=1}^{d+1} c_i^2 I_d$.
    Hence $\Phi$ has mutually orthogonal rows with row norm as $\sqrt{1/d\sum_{i=1}^{d+1} c_i^2}$.
    Let $x=[x_1,\dots, x_{d+1}]\in\mathbb{R}^{d+1}$ be a vector in the orthogonal complement of the row space of $\Phi$. Assume $\|x\|=\sqrt{1/d\sum_{i=1}^{d+1} c_i^2}$. Adding $x^T$ after the last row of $\Phi$ yields an augmented matrix
    $   \Phi_{\mathrm{aug}}=\left[\Phi^T, x\right]^T \in \mathbb{R}^{(d+1)\times(d+1)}$.
    It is clear that
     $   \Phi_{\mathrm{aug}}\Phi_{\mathrm{aug}}^T=\frac{1}{d}\sum_{i=1}^{d+1} c_i^2 I_{d+1}$.
    Thus $\Phi_{\mathrm{aug}}$ is a scaled orthogonal matrix and its columns are mutually orthogonal.
    The $j$th column of $\Phi_{\mathrm{aug}}$ is $[\varphi_j^T, x_j]^T\in\mathbb{R}^{d+1}$ for all $j\in\{1,\dots,d+1\}$.
    Note the column norm of $\Phi_{\mathrm{aug}}$ is $\sqrt{1/d\sum_{i=1}^{d+1} c_i^2}$.
    Then we have $\|\varphi_j\|^2+x_j^2=1/d\sum_{i=1}^{d+1} c_i^2$ and hence
    \begin{align}\label{eq_hj}
        x_j=\pm\sqrt{\frac{1}{d}\sum_{i=1}^{d+1} c_i^2-c_j^2}.
    \end{align}
    The regularity condition ensures $1/d\sum_{i=1}^{d+1} c_i^2-c_j^2\ge0$.

       By reversing the above proof, we can obtain an explicit construction algorithm for optimal placement with $n=d+1$ as shown in Algorithm \ref{corollary_N=d+1}.
       The rest is to prove the constructed optimal placements are mutually equivalent.
       First, given a vector $x\in\mathbb{R}^{d+1}$ satisfying \eqref{eq_hj}, let $\Phi$ and $\Phi'$ be two different bases of the orthogonal complement of $x$. Due to orthogonality, there exists an orthogonal matrix $U\in\mathbb{R}^{(d+1)\times(d+1)}$ such that
       \begin{align}\label{eq_transformAugmented}
           U
           \left[ \begin{array}{c} \Phi \\ x^T  \end{array} \right]
           =\left[ \begin{array}{c} \Phi' \\ x^T  \end{array} \right].
       \end{align}
       Write $U$ as
       \begin{align}\label{eq_R}
           U=
           \left[ \begin{array}{cc} U_{11} & U_{12} \\ U_{21} & U_{22}  \end{array} \right],
       \end{align}
       where $U_{11}\in\mathbb{R}^{d\times d}, U_{12}\in\mathbb{R}^{d\times 1}, U_{21}\in\mathbb{R}^{1\times d}$, and $U_{22}\in\mathbb{R}$.
       Substituting \eqref{eq_R} into \eqref{eq_transformAugmented} gives $ U_{21}\Phi+(U_{22}-1)x^T=0$.
       Since the rows of $\Phi$ and $x^T$ are linearly independent, we have
       $U_{21}=0$, $U_{22}=1$. Thus $U_{12}=0$ and $U_{11}\Phi=\Phi'$.
       Therefore, the placements described by $\Phi$ and $\Phi'$ are differed only by an orthogonal transformation $U_{11}$. From Definition \ref{definition_equivalence}, the two placements are equivalent.
       Second, let $E\in\mathbb{R}^{(d+1)\times (d+1)}$ be a diagonal matrix with diagonal entries as $1$ or $-1$.
       Given arbitrary $x$ and $x'$ both satisfying \eqref{eq_hj}, there exists an $E$ such that $x'=Ex$. Note $E$ is also an orthogonal matrix. It can be analogously proved that the optimal placements would be differed by an orthogonal transformation and a number of flipping of sensors about the target. From Definition \ref{definition_equivalence}, these placements are also equivalent.
\end{proof}

From the proof of Theorem \ref{theorem_uniquness}, an explicit construction of the unique regular optimal placement in the case of $n=d+1$ can be summarized in Algorithm \ref{corollary_N=d+1}.
The following example illustrates the construction method in Algorithm \ref{corollary_N=d+1}.

\begin{algorithm}[t]
\caption{Construction of the unique regular optimal placement
$\{g_i\}_{i=1}^{d+1}$ with coefficients $\{c_i\}_{i=1}^{d+1}$.}\label{corollary_N=d+1}
    \begin{algorithmic}[1] 
        \State Choose $x=[x_1,\dots, x_{d+1}]\in\mathbb{R}^{d+1}$ with $x_j=\pm\sqrt{1/d\sum_{i=1}^{d+1} c_i^2-c_j^2}$ for $i\in\{1,\dots,d+1\}$.
        \State Use the singular value decomposition (SVD) to numerically compute an orthogonal basis of the orthogonal complement of $x$. Let $x=U\Sigma V^T$ be an SVD of $x$, where $U\in\mathbb{R}^{(d+1)\times(d+1)}$ is an orthogonal matrix.
        \State Let $u_i$ denote the $i$th column of $U$. Then $x=\pm\sqrt{1/d\sum_{i=1}^{d+1} c_i^2} u_1$, and $\Phi$ can be constructed as
        \begin{align}\label{eq_Phi_N=d+1}
            \Phi=\sqrt{\frac{1}{d}\sum_{i=1}^{d+1} c_i^2} \left[u_2,\dots,u_{d+1}\right]^T \in\mathbb{R}^{d\times (d+1)}.
        \end{align}
      \State Compute $g_i=\varphi_i/c_i$ for $i\in\{1,\dots,d+1\}$.
    \end{algorithmic}
\end{algorithm}

\begin{example}
    In $\mathbb{R}^3$, given four bearing-only sensors with sensor-target ranges respectively as $\|r_1\|=20$, $\|r_2\|=21$, $\|r_3\|=22$ and $\|r_4\|=23$.
    The measurement noise variance of the $i$th sensor is $\sigma_i=0.01$ with $i\in\{1,\dots,4\}$. Recall $c_i=1/(\sigma_i\|r_i\|)$ for bearing-only sensors.
    Then $c_1^2=25.00$, $c_2^2=22.68$, $c_3^2=20.66$, $c_4^2=18.90$ and $1/3\sum_{i=1}^4 c_i^2=29.08$.
    The sequence $\{c_i\}_{i=1}^4$ is regular.
    From \eqref{eq_hj}, choose $x=[2.02, 2.53, 2.90, 3.19]^T$. Compute the SVD of $x$ and use \eqref{eq_Phi_N=d+1} to compute $\Phi$ as
    \begin{align*}
        \Phi=
        \left[
          \begin{array}{cccc}
                -2.5307  &  4.5286  & -0.9906  & -1.0891\\
               -2.9016 &  -0.9906 &   4.2568 &  -1.2487\\
               -3.1901  & -1.0891  & -1.2487  &  4.0197\\
          \end{array}
        \right].
    \end{align*}
    It can be verified $\sum_{i=1}^4 c_i^2g_ig_i^T=\Phi\Phi^T=1/3\sum_{i=1}^4 c_i^2 I_3$.
\end{example}

Figure \ref{fig_uniqueplacement_2D} and Figure~\ref{fig_uniqueplacement_3D} show examples of unique optimal placements.
From the last subsection, we know a regular triangle and a regular tetrahedron are equally-weighted optimal.
By Theorem \ref{theorem_uniquness} they are also unique up to equivalence.
Hence the two equivalent placements in Figure \ref{fig_uniqueplacement_2D} consist of all possible forms of the equally-weight optimal placements with $n=3$ in $\mathbb{R}^2$.
The three equivalent placements in Figure \ref{fig_uniqueplacement_3D} consist of all possible forms of the equally-weight optimal placements with $n=4$ in $\mathbb{R}^3$.

When $n>d+1$, the regular optimal placement may not be unique.
In the next subsection, we will give examples to show the optimal placement may not be unique when $n\ge4$ in $\mathbb{R}^2$ or $n\ge6$ in $\mathbb{R}^3$.
Now a question remains: whether the regular optimal placement with $n=5$ in $\mathbb{R}^3$ is unique up to the equivalence.
The answer is negative.
The following is an explanation and an explicit construction of the regular optimal placement with $n=5$ in $\mathbb{R}^3$.

\begin{figure}
  \centering
  \subfloat[]{\label{}\includegraphics[width=0.23\linewidth]{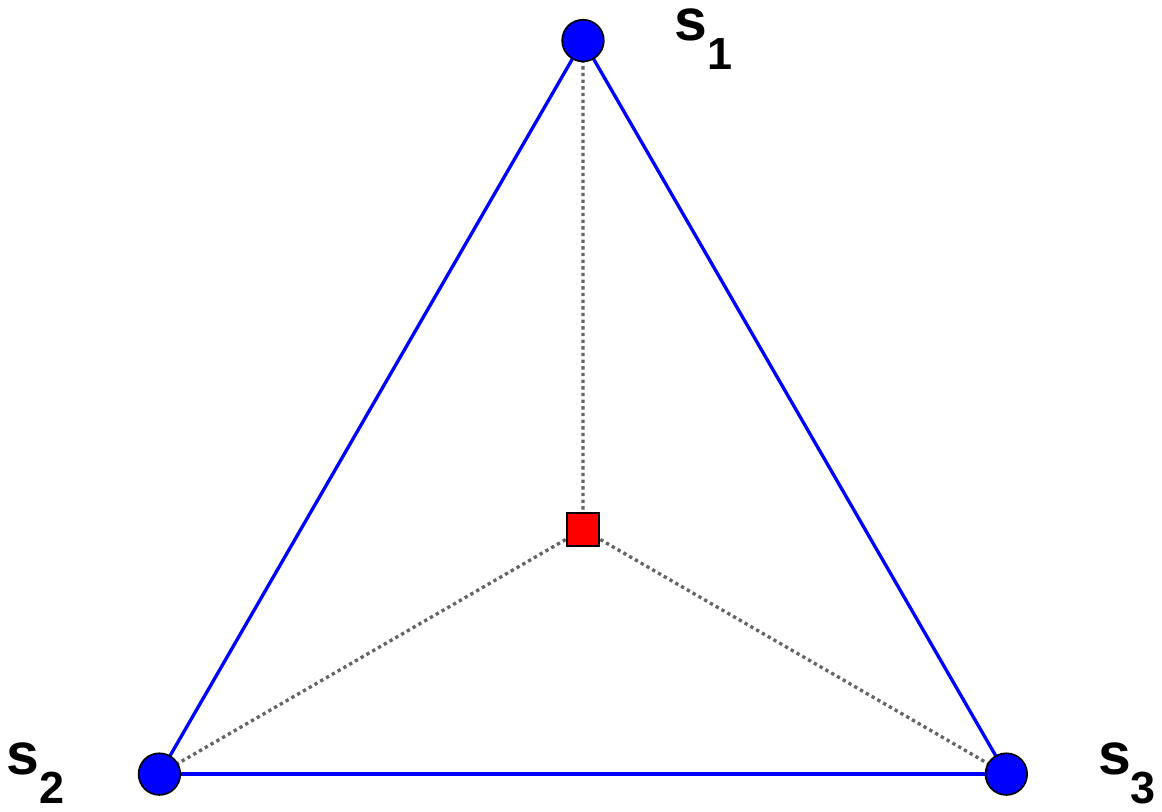}}
  \ \ \
  \subfloat[]{\label{}\includegraphics[width=0.23\linewidth]{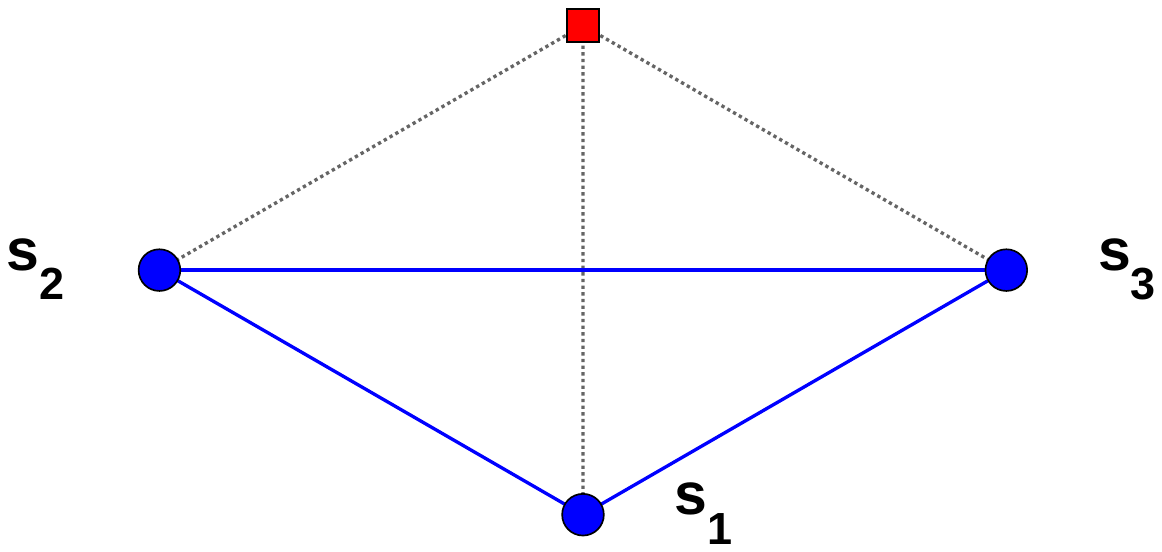}}
  \caption{The unique equally-weighted optimal placements with $n=3$ in $\mathbb{R}^2$. Red square: target; blue dots: sensors.
  (a) Regular triangle. (b) Flip $s_1$ about the target.}
  \label{fig_uniqueplacement_2D}
\end{figure}
\begin{figure}
  \centering
  \subfloat[]{\label{}\includegraphics[width=0.2\linewidth]{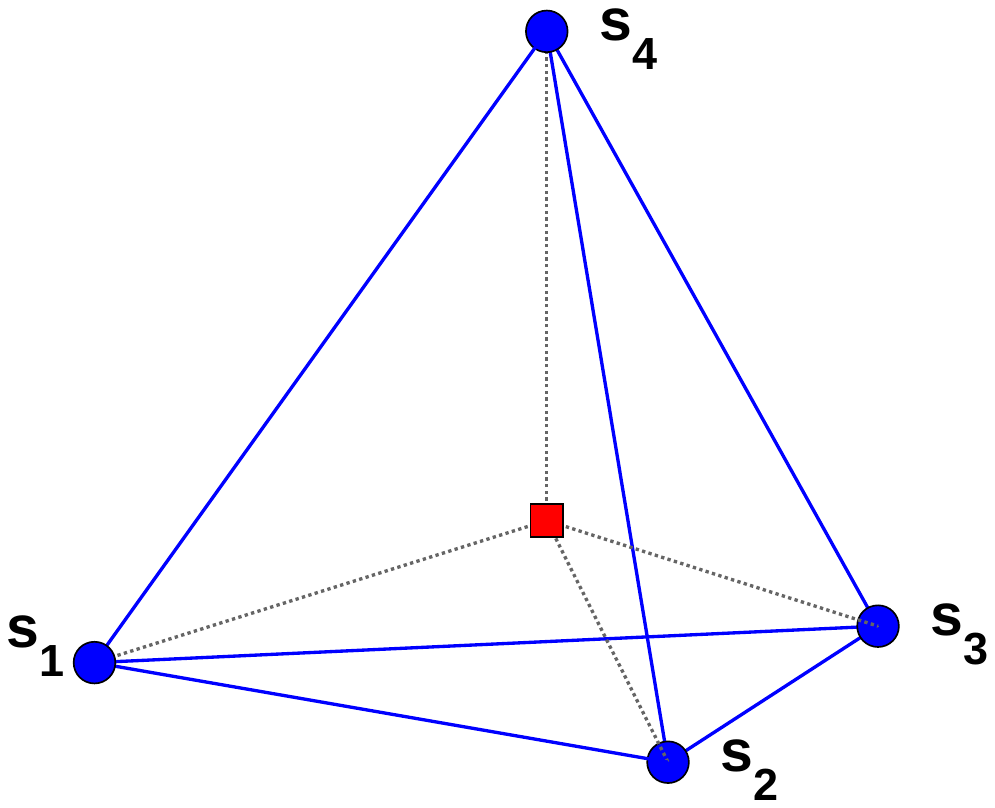}}
  \ \ \
  \subfloat[]{\label{}\includegraphics[width=0.2\linewidth]{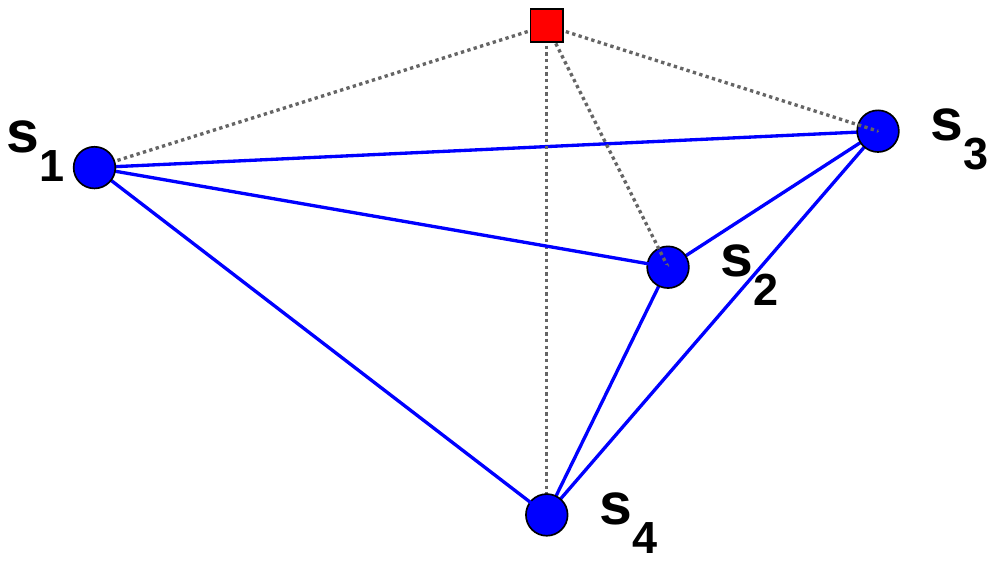}}
  \ \ \
  \subfloat[]{\label{}\includegraphics[width=0.2\linewidth]{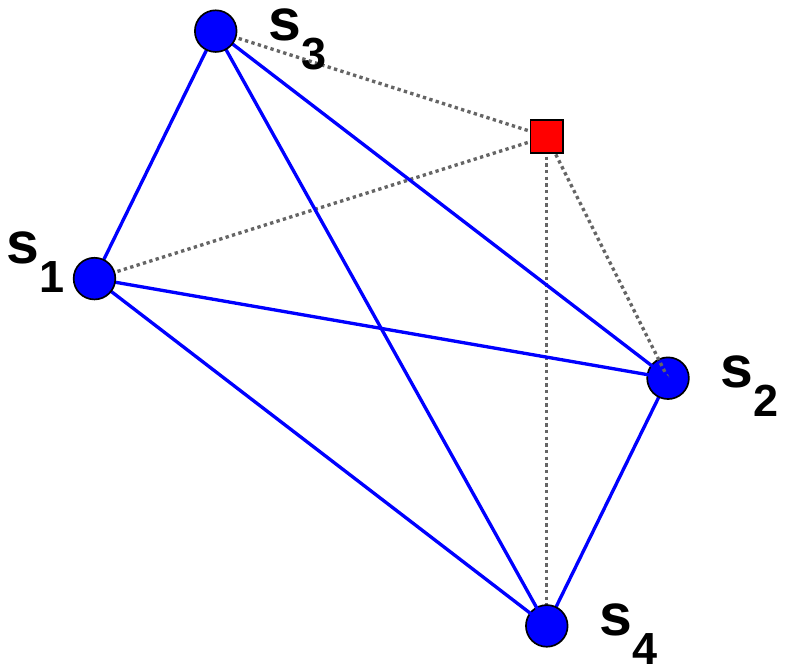}}
  \ \ \
  \caption{The unique equally-weighted optimal placements with $n=4$ in $\mathbb{R}^3$.
  Red square: target; blue dots: sensors.
  (a) Regular tetrahedron. (b) Flip $s_4$ about the target. (c) Flip $s_4$ and $s_3$ about the target.}
  \label{fig_uniqueplacement_3D}
\end{figure}

Suppose the sequence $\{c_i\}_{i=1}^5$ is regular with respect to $d=3$. Denote $\varphi_i=c_ig_i$ and $\Phi=[\varphi_1,\dots,\varphi_5]\in\mathbb{R}^{3\times5}$. Then \eqref{eq_RegularOptimalPlacement} becomes
$\Phi\Phi^T=1/3\sum_{i=1}^5c_i^2 I_3$.
There always exists $\Phi'=[\varphi_1',\dots,\varphi_5']\in\mathbb{R}^{2\times5}$ in the orthogonal complement of the row space of $\Phi$ such that
\begin{align*}
    \left[
      \begin{array}{c}
        \Phi \\
        \Phi' \\
      \end{array}
    \right]
    \left[
      \begin{array}{cc}
        \Phi^T & \Phi'^T \\
      \end{array}
    \right]
    =\frac{1}{3}\sum_{i=1}^5c_i^2 I_5,
\end{align*}
which implies $\|\varphi_j\|^2+\|\varphi_j'\|^2=1/3\sum_{i=1}^5c_i^2$ and $\Phi'\Phi'^T=1/3\sum_{i=1}^5c_i^2 I_2$.
Thus $\{\varphi_j'\}_{j=1}^5$ represents a 2D regular optimal placement with $\|\varphi_j'\|=\sqrt{1/3\sum_{i=1}^5c_i^2-c_j^2}$ for all $j\in\{1,\dots,5\}$ (it can be verified $\{\|\varphi_j'\|\}_{j=1}^5$ is regular with respect to $d=2$).
Therefore, to obtain $\Phi$, we can first construct $\{\varphi_j'\}_{j=1}^5$ using Algorithm \ref{corrollary_construction2D} for example, and then find $\Phi$ in the orthogonal complement of the row space of $\Phi'$.
Since $\{\varphi_i'\}_{i=1}^5$ may have non-equivalent solutions, $\{\varphi_i\}_{i=1}^5$ would not be unique up to the equivalence.

\subsection{Distributed Construction}\label{subsection_distributedconstruction}
When there are a large number of sensors, it might be inconvenient to design the optimal placement involving all sensors. The following property can be applied to construct large-scale optimal placements in a distributed manner. The 2D versions of the following result have been proposed in \cite{bishop10,dogancy08,Bishop09RSS}.

\begin{theorem}\label{theorem_distributedConstruction}
    The union of multiple disjoint regular optimal placements in $\mathbb{R}^d$ ($d=2$ or $3$) is still a regular optimal placement in $\mathbb{R}^d$.
\end{theorem}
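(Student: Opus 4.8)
The plan is to exploit the additivity of the frame operator. By Theorem~\ref{theorem_regularOptimalPlacement}, a regular optimal placement is exactly a configuration whose frame operator equals a scaled identity (equation~\eqref{eq_RegularOptimalPlacement}). Since the frame operator of a combined collection of sensors is literally the sum of the individual frame operators, I would simply add these scaled-identity equations together and read off that the union again satisfies~\eqref{eq_RegularOptimalPlacement}.

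First I would fix notation: suppose there are $m$ disjoint regular optimal placements, where the $k$-th placement consists of bearings $\{g_i^{(k)}\}$ with coefficients $\{c_i^{(k)}\}$. By definition of a regular optimal placement each satisfies
\begin{align*}
    \sum_i (c_i^{(k)})^2 g_i^{(k)} (g_i^{(k)})^T = \frac{1}{d}\left(\sum_i (c_i^{(k)})^2\right) I_d.
\end{align*}
Next I would form the union. Because the placements are disjoint, the frame operator of the union is precisely the sum over $k$ of the individual frame operators, with no cross terms, so summing the $m$ identities above gives
\begin{align*}
    \sum_{k=1}^m \sum_i (c_i^{(k)})^2 g_i^{(k)} (g_i^{(k)})^T = \frac{1}{d}\left(\sum_{k=1}^m \sum_i (c_i^{(k)})^2\right) I_d,
\end{align*}
which is exactly the defining relation~\eqref{eq_RegularOptimalPlacement} written for the combined coefficient sequence.

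Finally I would conclude using Theorem~\ref{theorem_regularOptimalPlacement} together with Lemma~\ref{lemma_fundamentalInequality}. The point worth flagging is that I do not need to separately check that the combined coefficient sequence is \emph{regular}: the union, being a solution of~\eqref{eq_RegularOptimalPlacement}, is a tight frame with the combined norms, and by Lemma~\ref{lemma_fundamentalInequality} the existence of such a tight frame forces the combined sequence to be regular. Hence the union is a regular optimal placement in $\mathbb{R}^d$. I do not anticipate a genuine obstacle; the argument is essentially a one-line consequence of the linearity of $\Phi\Phi^T$ in the rank-one summands. The only care needed is in the bookkeeping of the disjointness hypothesis (so that the union's frame operator decomposes cleanly) and in recognizing that regularity of the union is an automatic consequence of the construction rather than an extra assumption to be imposed.
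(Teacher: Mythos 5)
Your proof is correct and follows essentially the same route as the paper's: both exploit the additivity of the frame operator, summing the scaled-identity equations \eqref{eq_RegularOptimalPlacement} of the disjoint sub-placements to obtain the same equation for the union, then invoking Theorem~\ref{theorem_regularOptimalPlacement}. Your closing observation---that regularity of the combined coefficient sequence need not be checked separately but follows from Lemma~\ref{lemma_fundamentalInequality} once the union is exhibited as a tight frame with the combined norms---makes explicit a step the paper's proof leaves implicit, and is a worthwhile clarification.
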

\begin{proof}
    In $\mathbb{R}^d$, consider multiple disjoint regular optimal placements: $\{c_i, g_i\}_{i\in\mathcal{I}_k}$ with $\mathcal{I}_k$ as the index set of the $k$th placement ($k=1,\dots,q$).
    The term disjoint as used here means that different placements share no common sensors.
    Define $\mathcal{I}=\bigcup_{k=1}^q \mathcal{I}_k$.
    If $|\cdot|$ denotes the cardinality of a set, then $|\mathcal{I}|=\sum_{k=1}^q |\mathcal{I}_k|$.

    For the $k$th placement, since $\{c_i, g_i\}_{i\in\mathcal{I}_k}$ is regular optimal in $\mathbb{R}^d$, from Theorem \ref{theorem_regularOptimalPlacement} we have
    \begin{align*}
        \sum_{i\in\mathcal{I}_k} c_i^2 g_i g_i^T=\frac{1}{d}\sum_{i\in\mathcal{I}_k} c_i^2 I_d.
    \end{align*}
    For the union placement, we have
    \begin{align*}
        \sum_{j\in\mathcal{I}} c_j^2 g_j g_j^T
        &= \sum_{k=1}^q \sum_{i\in\mathcal{I}_k} c_i^2 g_i g_i^T \nonumber \\
        &= \frac{1}{d} \sum_{k=1}^q \sum_{i\in\mathcal{I}_k} c_i^2 I_d \nonumber \\
        &= \frac{1}{d} \sum_{j\in\mathcal{I}} c_j^2 I_d.
    \end{align*}
    By Theorem \ref{theorem_regularOptimalPlacement}, the union placement is regular optimal in $\mathbb{R}^d$.
\end{proof}

Theorem \ref{theorem_distributedConstruction} implies that a large-scale regular optimal placement can be constructed in a distributed manner:
firstly divide the large-scale placement into a number of disjoint regular sub-placements, secondly construct each regular optimal sub-placement, and finally combine these optimal sub-placements together to obtain a large regular optimal placement.
We call this kind of method \emph{distributed construction}.
Because the combination of the optimal sub-placements can be arbitrary, distributed construction will lead to an infinite number of optimal placements for the large system.
These optimal placements have the same FIM and $\|G\|^2$, but they are generally \emph{non-equivalent}.
From Theorem \ref{theorem_distributedConstruction}, we also know only regular placements can be possibly divided into some regular subsets.

The distributed construction method is very suitable for constructing equally-weighted optimal placements.
That is because any equally-weighted sequence $\{c_i\}_{i=1}^{k}$ is regular if $k \ge d$, and thus $\{c_i\}_{i=1}^n$ can be easily divided into a number of regular subsets.
We next present two instances.
(i)~For any integer $n\ge4$, it is obvious that there exist nonnegative integers $m_1$ and $m_2$ such that $n$ can be decomposed as $n=2m_1+3m_2$.
Thus in $\mathbb{R}^2$ we can always distributedly construct an equally-weighted optimal placement with $n\ge4$ by using the ones with $n=2$ or $3$.
(ii) For any integer $n\ge6$, there exist nonnegative integers $m_1$, $m_2$ and $m_3$ such that $n=3m_1+4m_2+5m_3$.
Thus in $\mathbb{R}^3$ we can always distributedly construct an equally-weighted optimal placement with $n\ge6$ by using the ones with $n=3$, $4$ or $5$.
As a consequence, noticing distributed construction yields an infinite number of non-equivalent optimal placements, equally-weighted placements with $n\ge4$ in $\mathbb{R}^2$ or $n\ge6$ in $\mathbb{R}^3$ always have an infinite number of non-equivalent optimal solutions.

\begin{figure}
  \centering
  \subfloat[]{\label{}\includegraphics[width=0.2\linewidth]{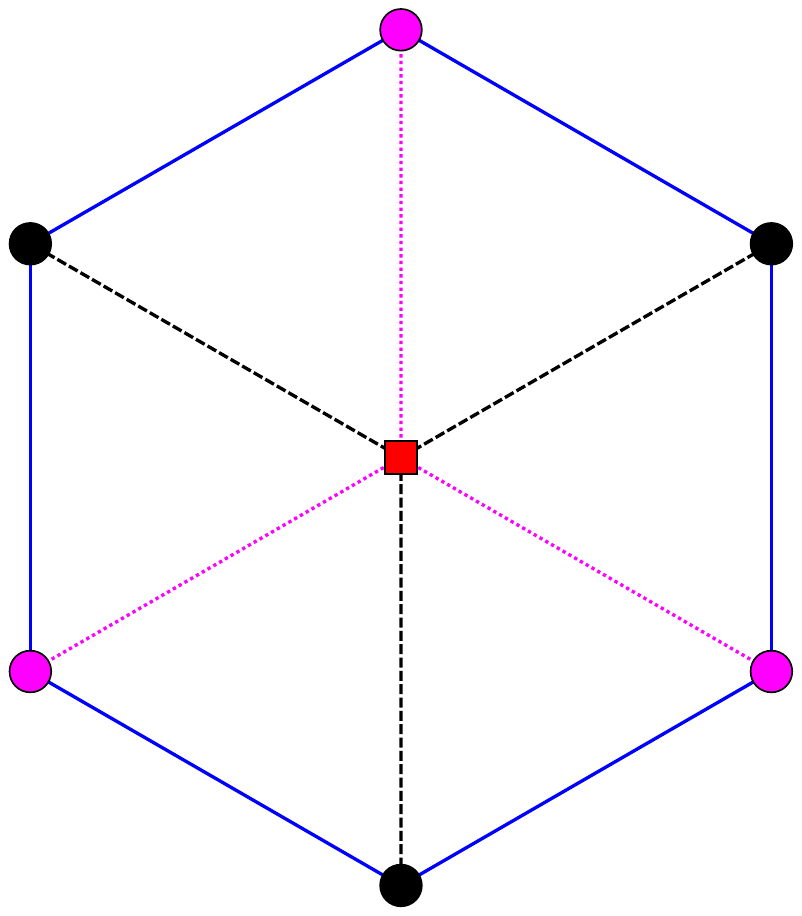}}
  \subfloat[]{\label{}\includegraphics[width=0.2\linewidth]{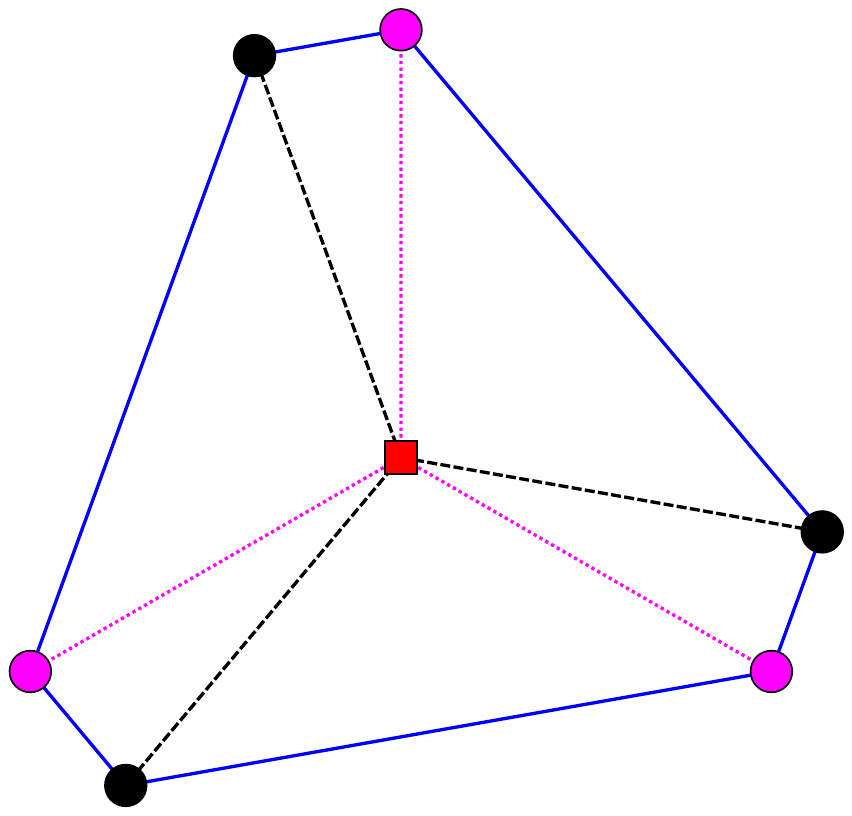}} \ \
  \subfloat[]{\label{}\includegraphics[width=0.2\linewidth]{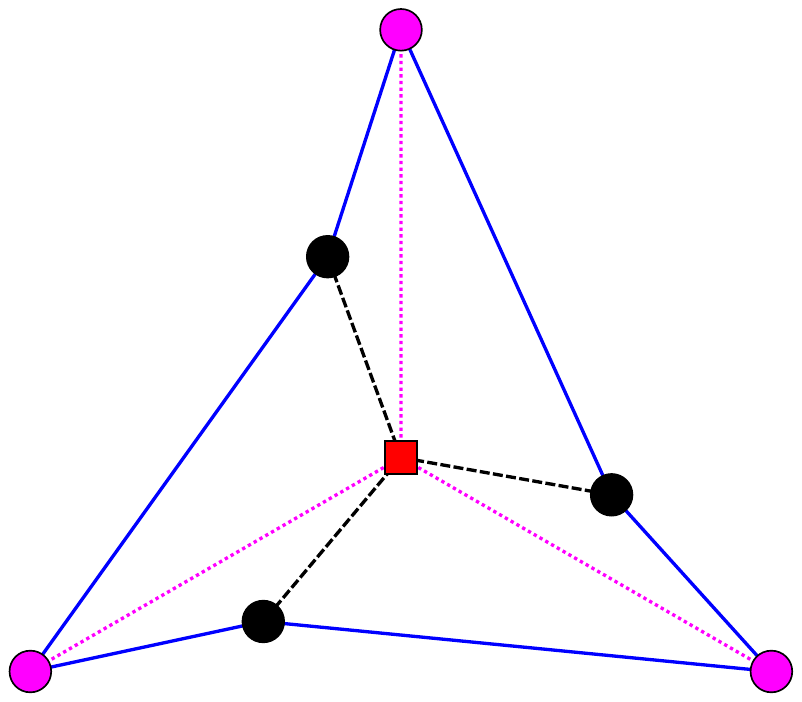}}\\
  \subfloat[]{\includegraphics[width=0.2\linewidth]{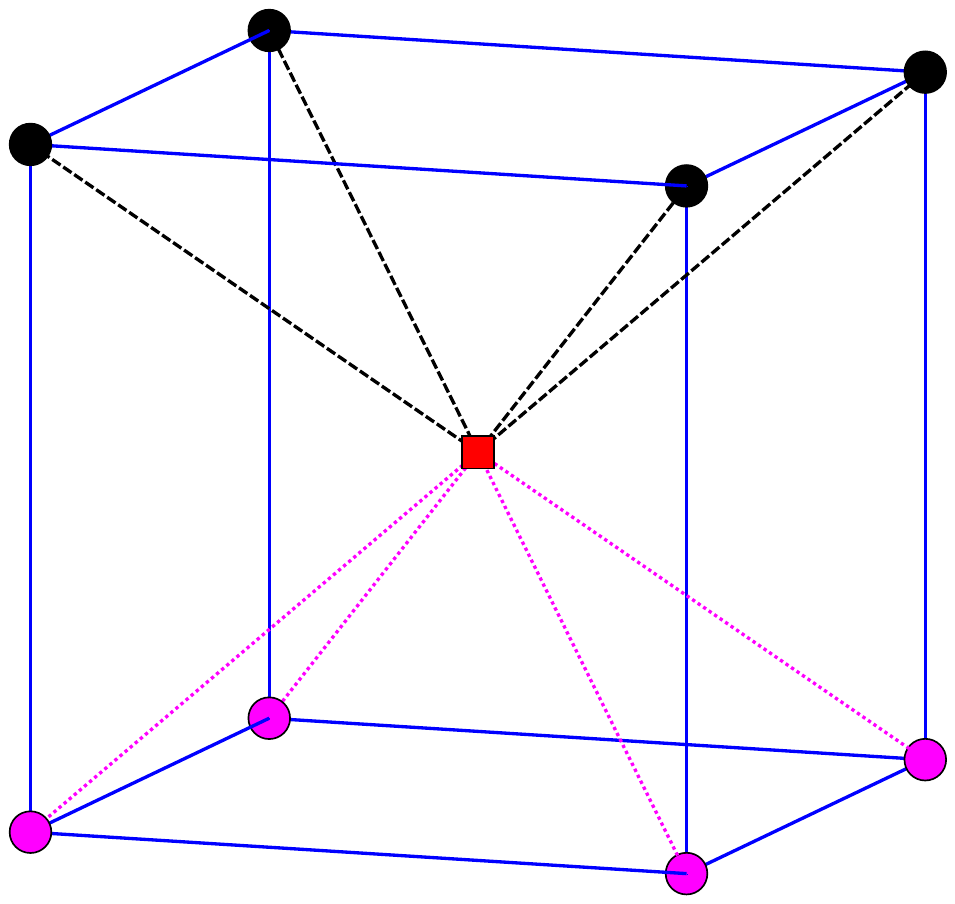}}
  \ \
  \subfloat[]{\includegraphics[width=0.2\linewidth]{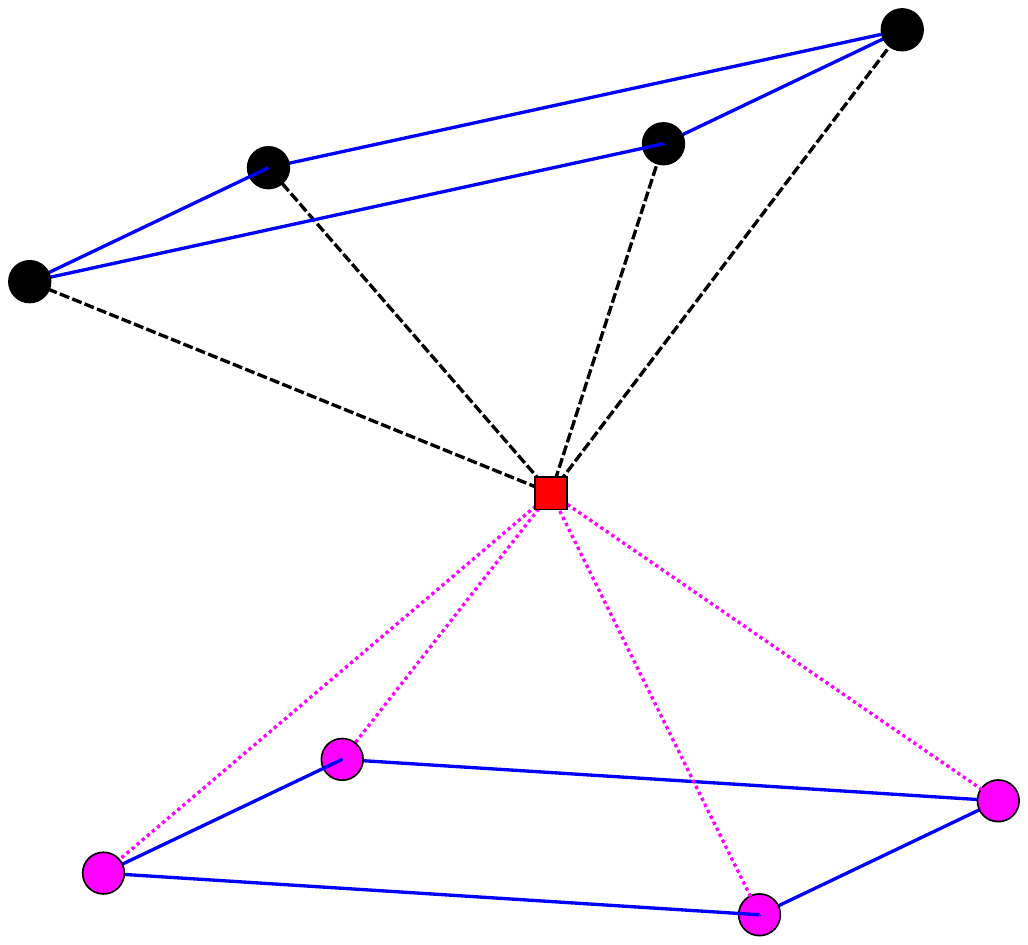}}
  \caption{Examples of optimal placement by distributed construction. Red square: target; dots: sensors.}
  \label{fig_distributed_2D}
\end{figure}

We show some typical examples in Figure \ref{fig_distributed_2D} to illustrate the distributed construction.
From Section \ref{subsection_equallyweightedplacement}, we know a regular triangle or regular tetrahedron placement is equally-weighted optimal.
The placement shown in Figure~\ref{fig_distributed_2D} (a), (b) or (c) is distributedly constructed by two regular optimal placements with $n=3$ (the sensors with the same color form a triangle optimal placement).
The placement shown in Figure \ref{fig_distributed_2D} (d) or (e) is distributedly constructed by the optimal placement with  $n=4$ as shown in Figure~\ref{fig_uniqueplacement_3D} (c), which is equivalent to the regular tetrahedron.
Thus by Theorem \ref{theorem_distributedConstruction} all placements in Figure \ref{fig_distributed_2D} are regular optimal.

\section{Numerical Verification}\label{section_control}
In order to verify our previous analytical analysis, in this section we solve the parameter optimization problem \eqref{eq_problemdefinition2} from a numerical perspective and then present some numerical simulations.
More specifically, we employ Lyapunov approaches to design a centralized gradient control law which can numerically minimize the objective function $\|G\|^2$ given an appropriate initial point.
The control law can be applied to numerically construct generic regular and irregular optimal placements in 2D and 3D.

Assume the motion model of sensor $i$ to be
$ \dot{s}_i=u_i$,
where $u_i\in\mathbb{R}^d$ is the control input.
Then we have $\dot{r}_i=u_i$ because $r_i=s_i-p$ and the target position estimation $p$ is given.
Let $r=[r_1^T,\dots,r_n^T]^T\in\mathbb{R}^{dn}$.
Denote $\beta$ as the constant lower bound of $\|G\|^2$ as shown in Theorem \ref{theorem_regularOptimalPlacement} and \ref{theorem_irregularOptimalPlacement}.
Then the optimal placement set is $\mathcal{E}_0=\{r\in\mathbb{R}^{dn}: \|G\|^2-\beta=0\}$.
Choose the Lyapunov function as $V(r)= 1/4(\|G\|^2-\beta)$. Clearly $V$ is positive definite with respect to $\mathcal{E}_0$.
Denote $\partial V / \partial r_i$ as the Jacobian of $V$ with respect to $r_i$. Then we have
\begin{align*}
    \dot{V}
    = \sum_{i=1}^n \frac{\partial V}{\partial r_i}\dot{r}_i
    = \sum_{i=1}^n \frac{c_i^2}{\|r_i\|}g_i^T G P_i \dot{r}_i,
\end{align*}
where $P_i=I_d-g_ig_i^T$ is an orthogonal projection matrix satisfying $P_i^T=P_i$, $P_i^2=P_i$, and $\mathrm{Null}(P_i)=\mathrm{span}\{g_i\}$. $\mathrm{Null}(\cdot)$ denotes the null space of a matrix.
Design the gradient control law as
\begin{align}\label{eq_closedLoopSystem}
    \dot{r}_i=-P_iGg_i,
\end{align}
such that
\begin{align*}
    \dot{V}= -\sum_{i=1}^n \frac{c_i^2}{\|r_i\|}\|P_i G g_i\|^2 \le 0
\end{align*}
and $\dot{V}=0$ when $P_i G g_i=0$ for all $i\in\{1,\dots,n\}$.

\begin{figure}
  \centering
  \includegraphics[width=0.2\linewidth]{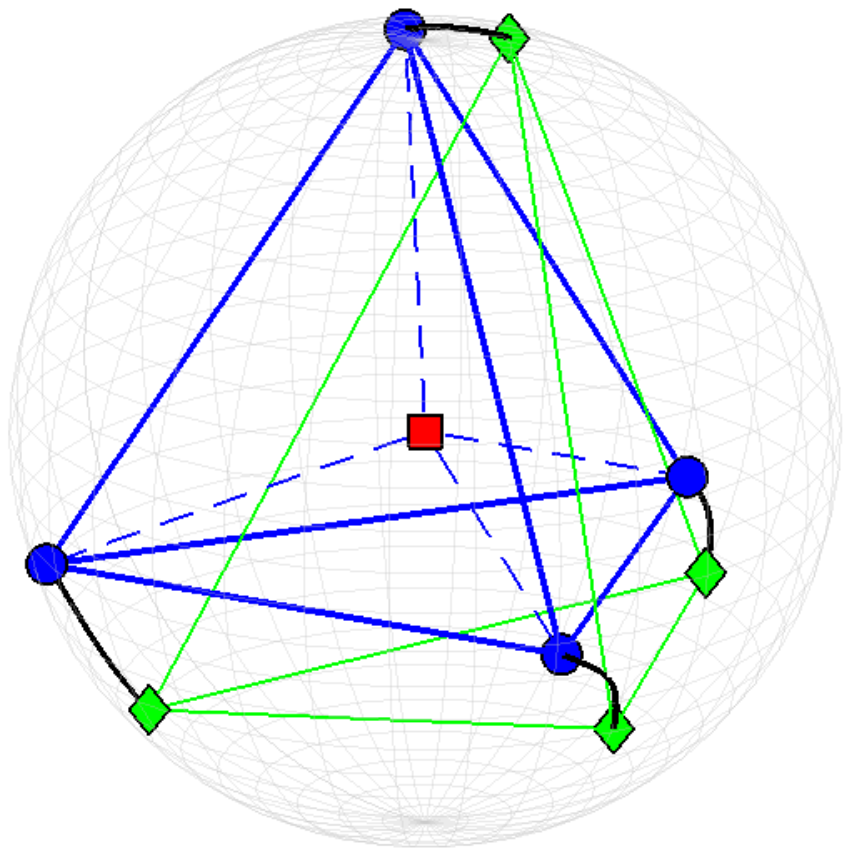}
  \includegraphics[width=0.2\linewidth]{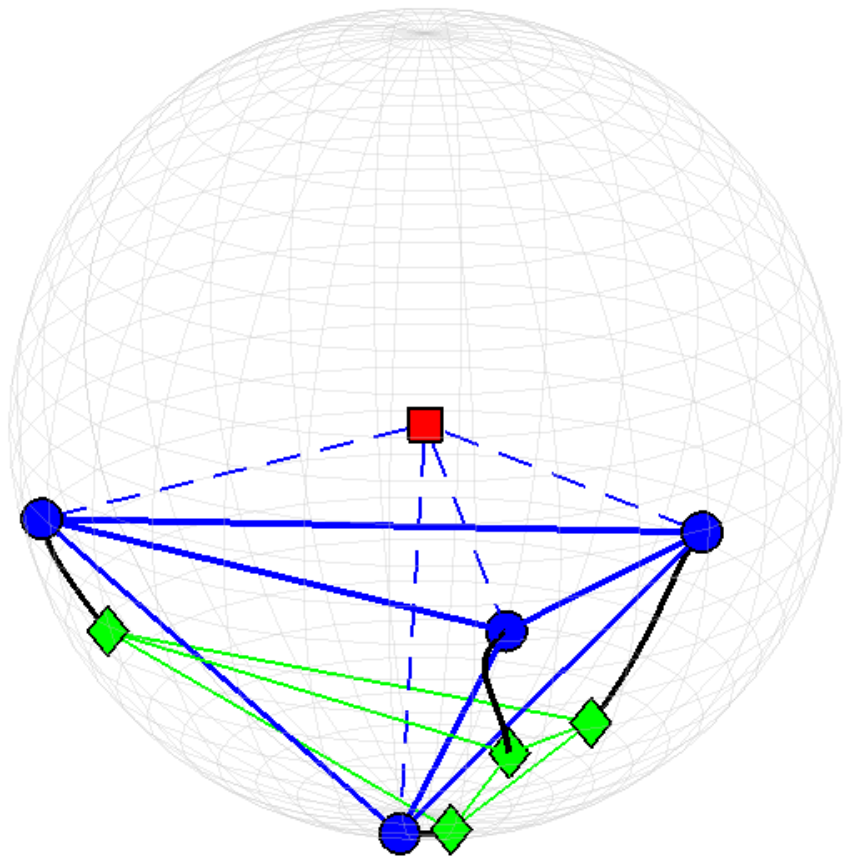}
  \includegraphics[width=0.2\linewidth]{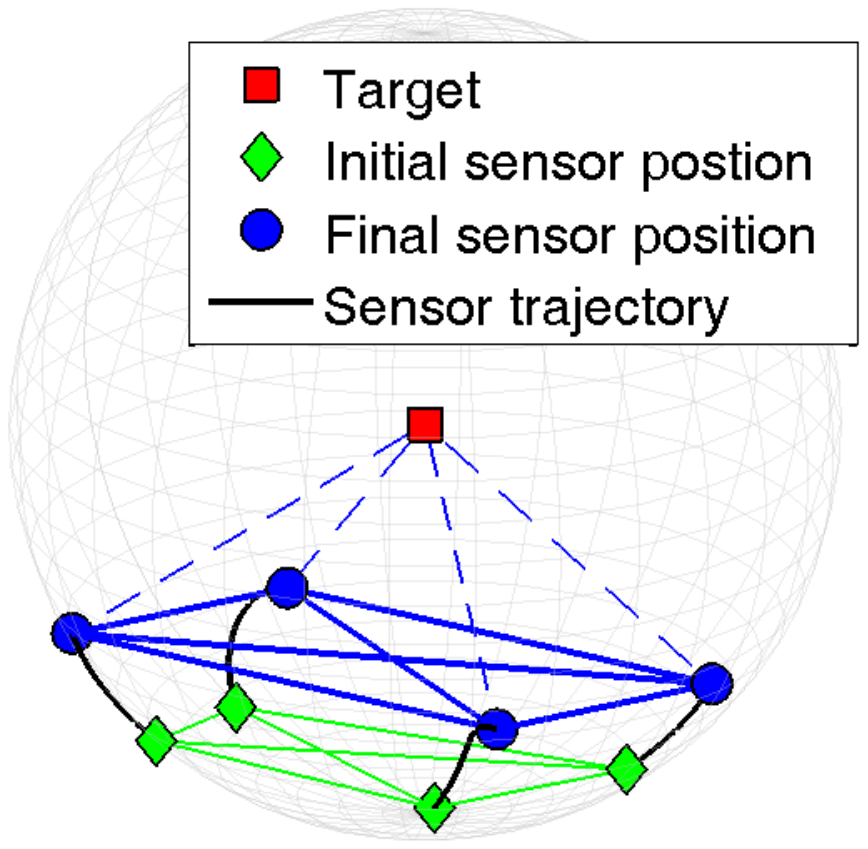}\\
  \includegraphics[width=0.2\linewidth]{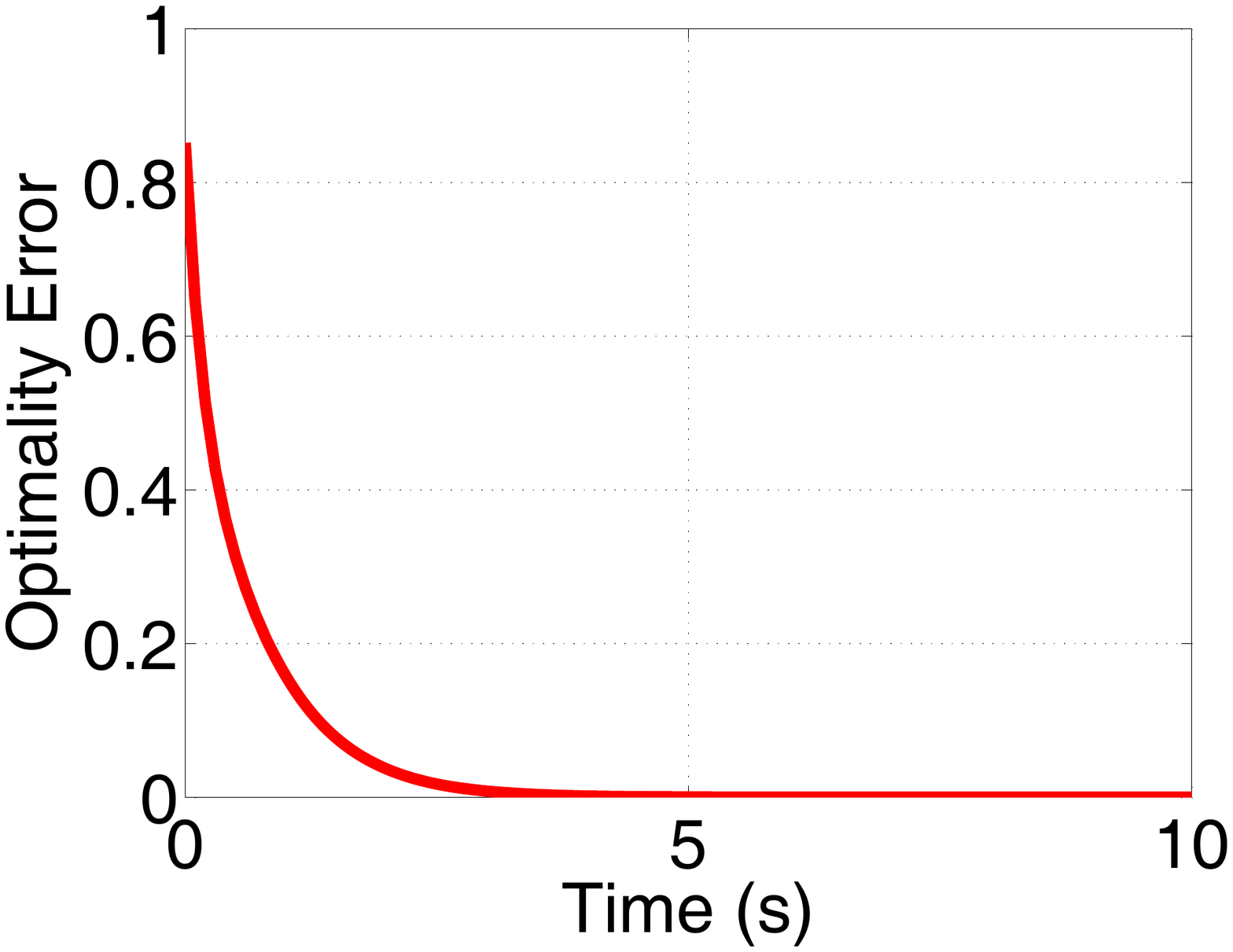}
  \includegraphics[width=0.2\linewidth]{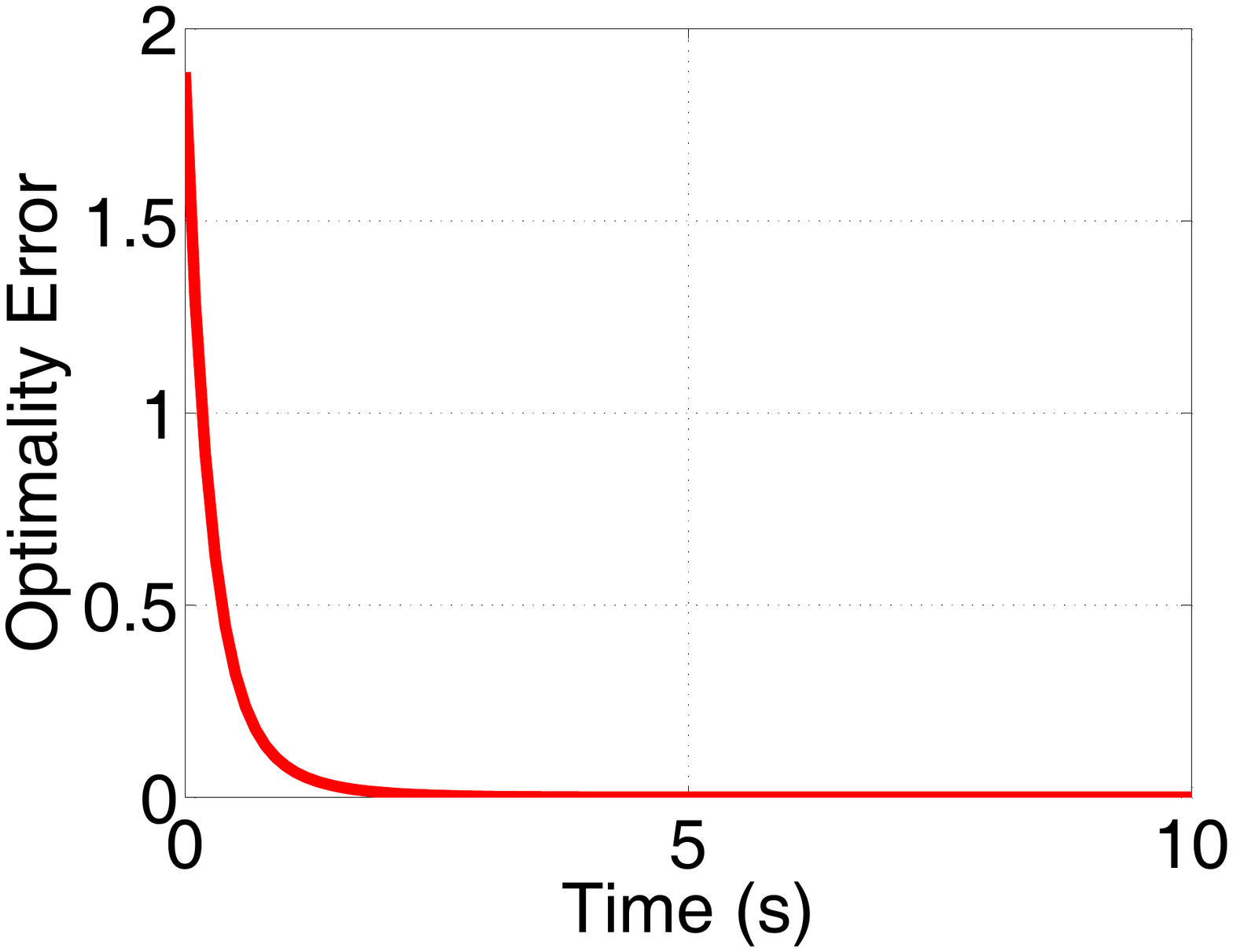}
  \includegraphics[width=0.2\linewidth]{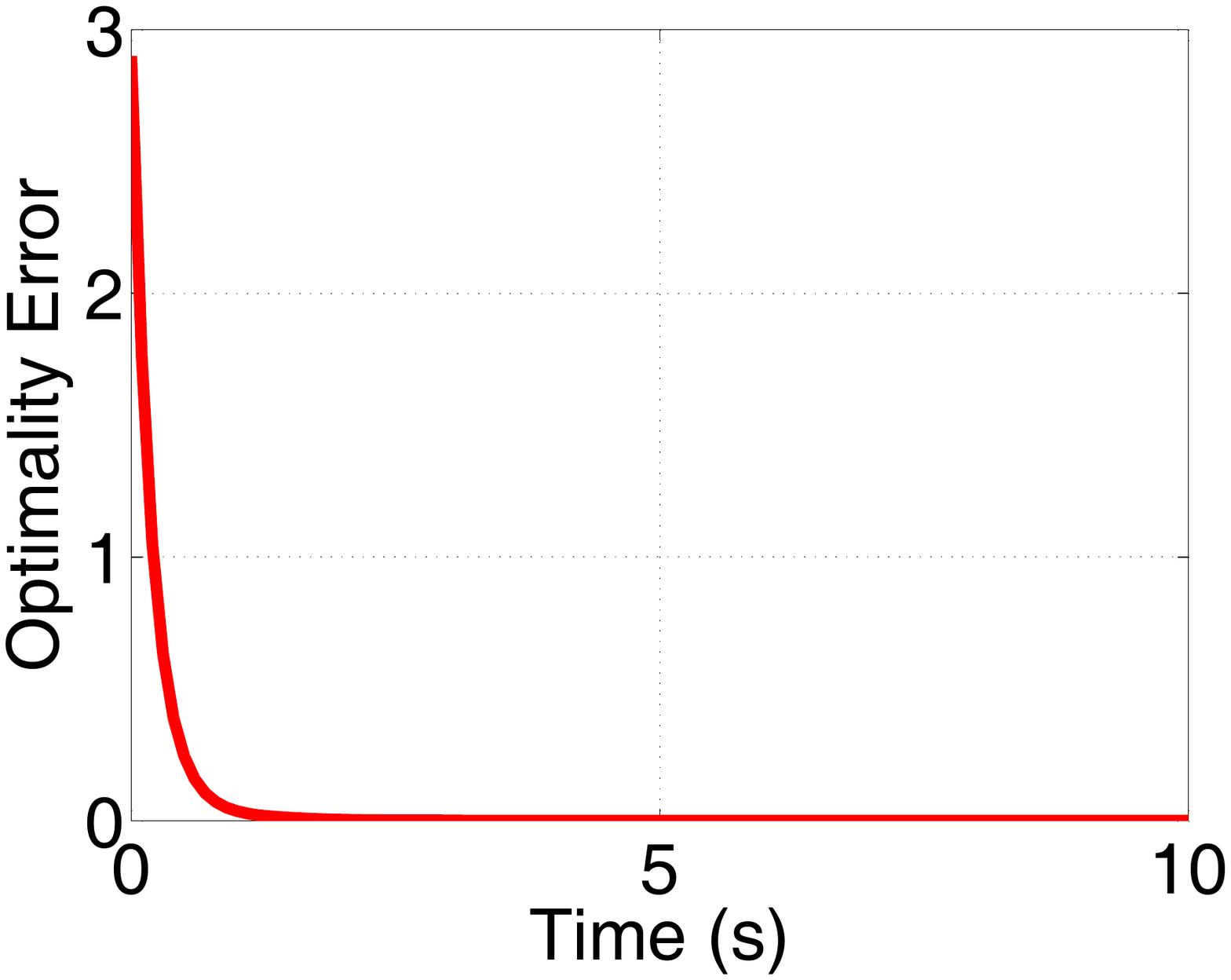}
  \caption{Gradient control of equally-weighted (regular) placements with $n=4$ in $\mathbb{R}^3$.}
  \label{fig_simulation_bearing_3D_regular}
\end{figure}
\begin{figure}
  \centering
  \includegraphics[width=0.2\linewidth]{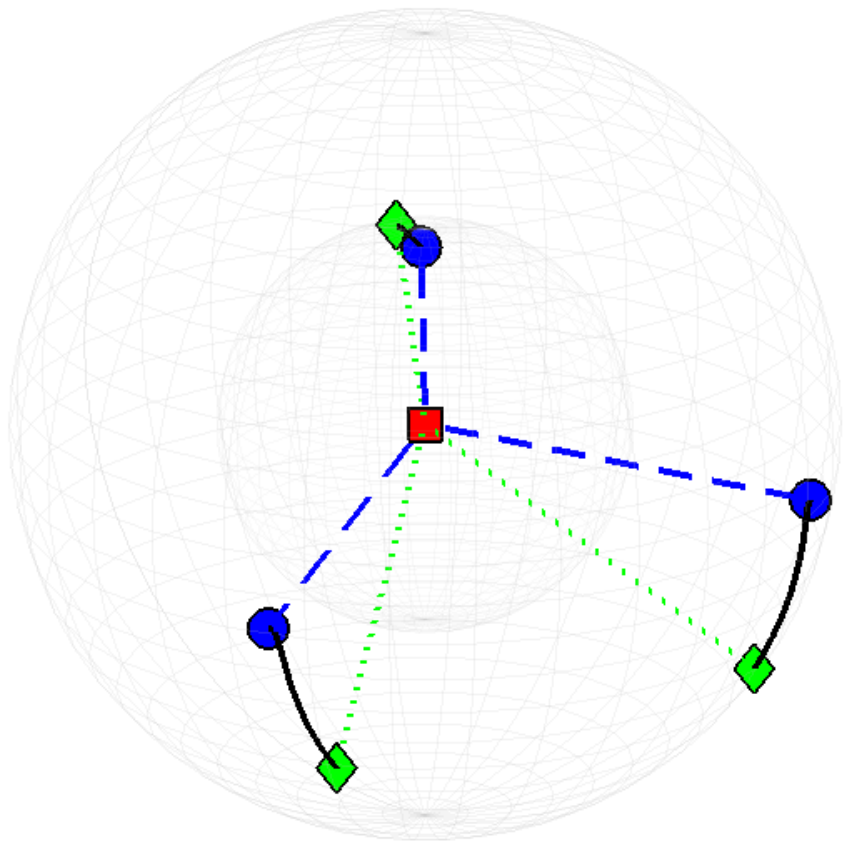}
  \includegraphics[width=0.2\linewidth]{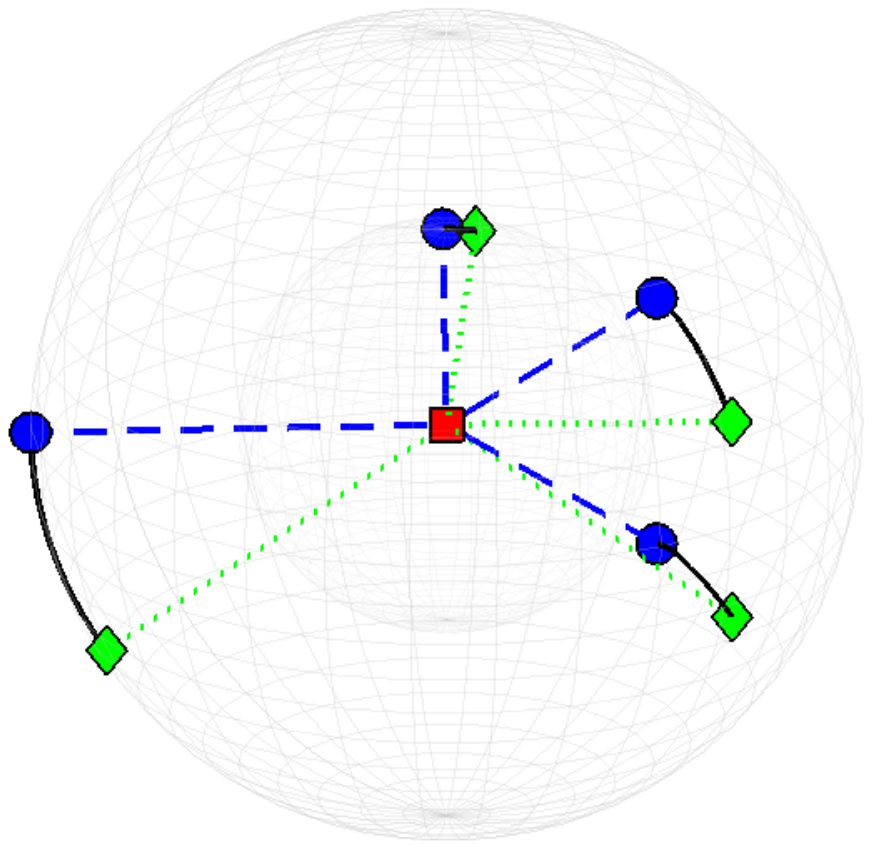}
  \includegraphics[width=0.2\linewidth]{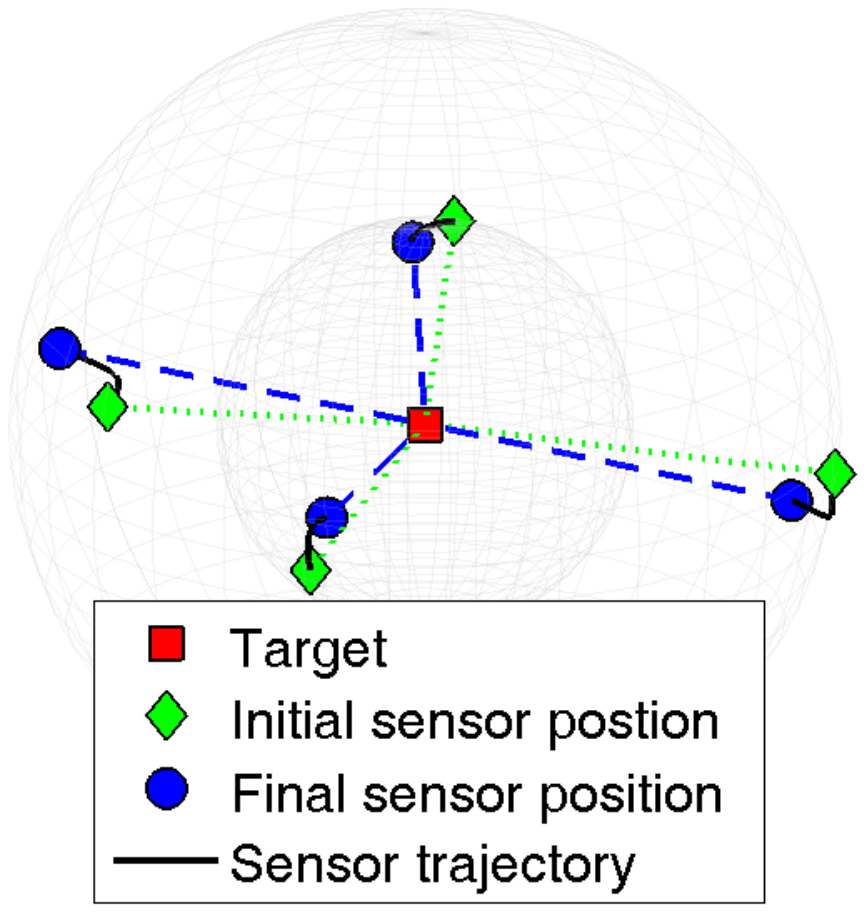}\\
  \subfloat[$n=3$, $k_0=1$]{\includegraphics[width=0.2\linewidth]{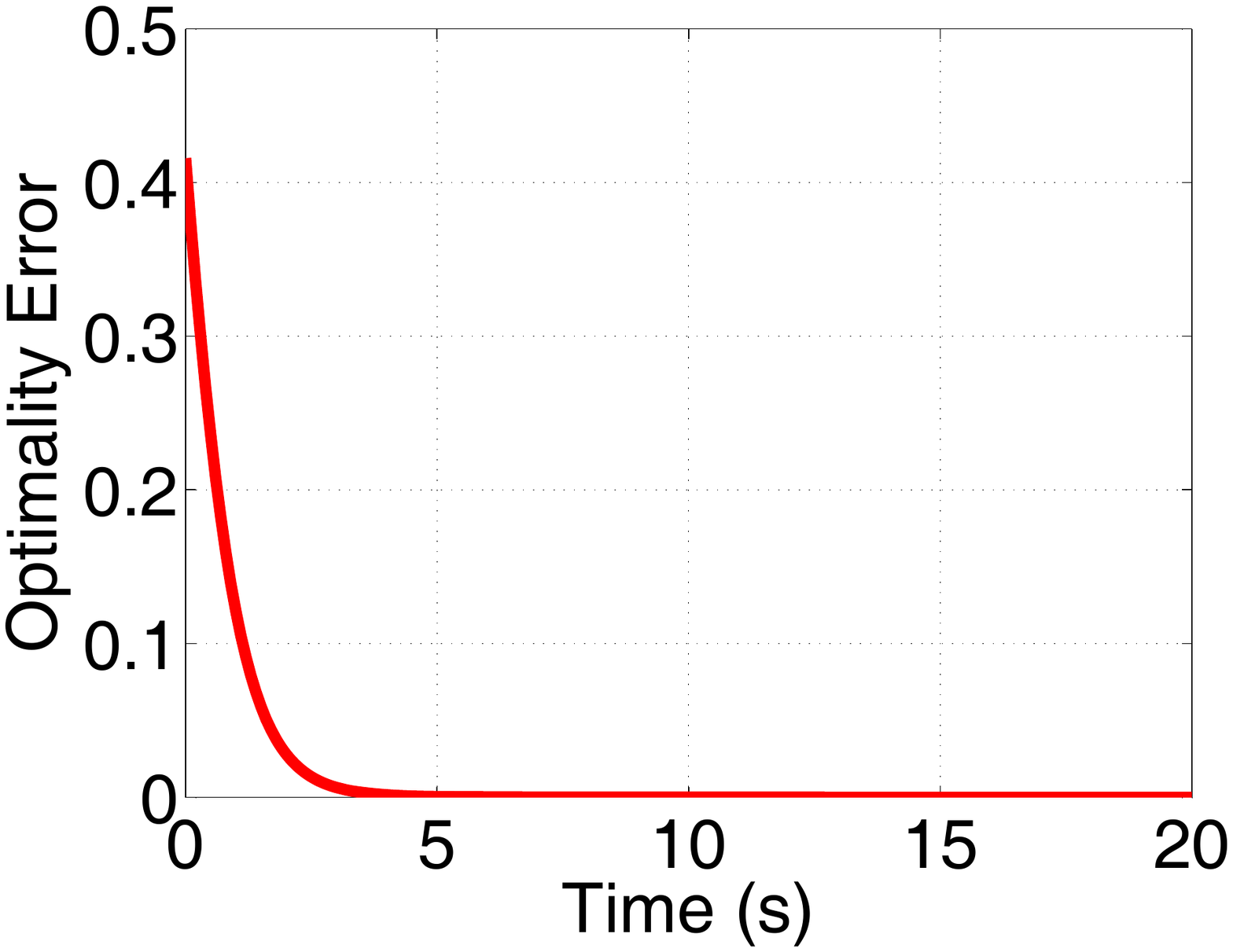}}
  \subfloat[$n=4$, $k_0=1$]{\includegraphics[width=0.2\linewidth]{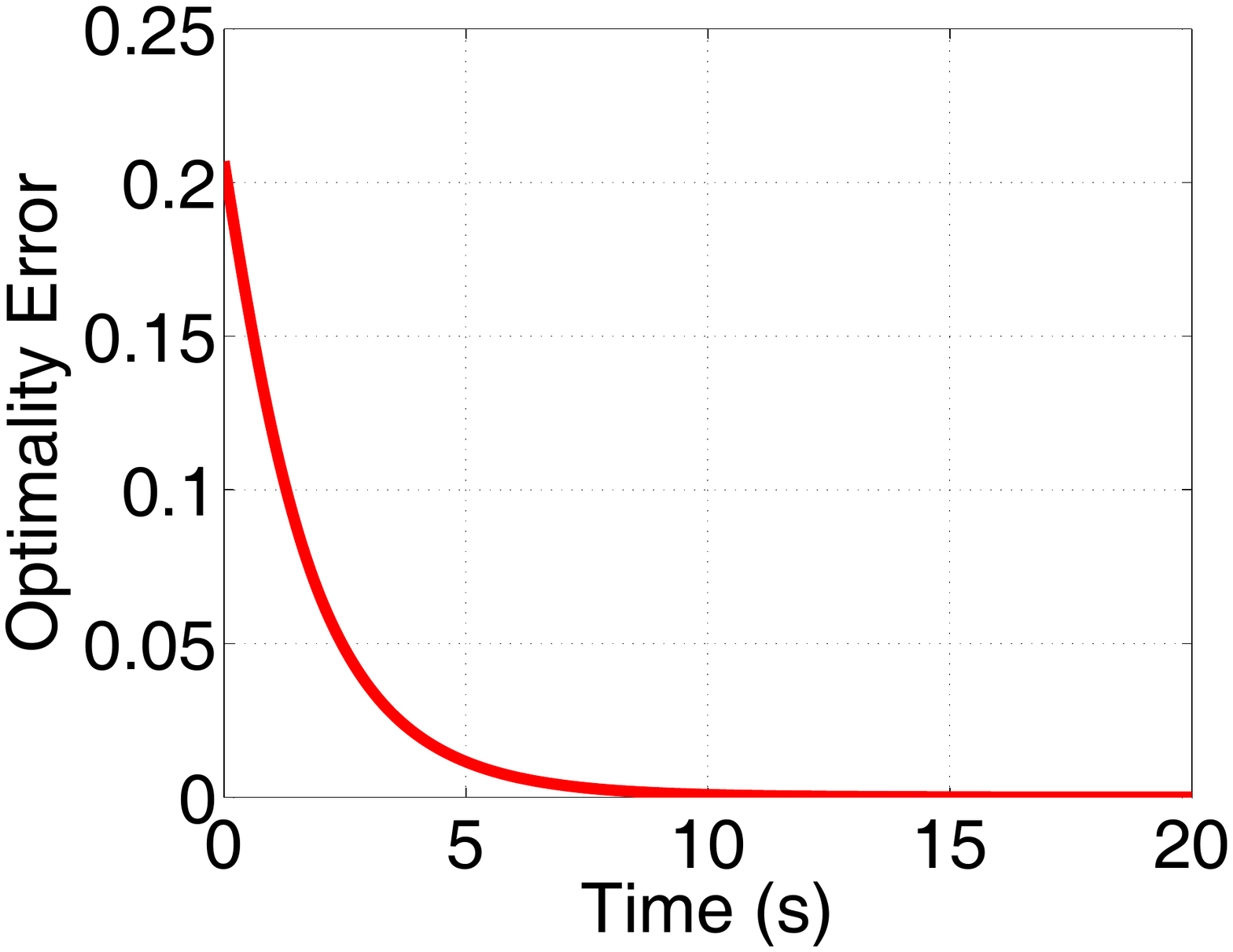}}
  \subfloat[$n=4$, $k_0=2$]{\includegraphics[width=0.2\linewidth]{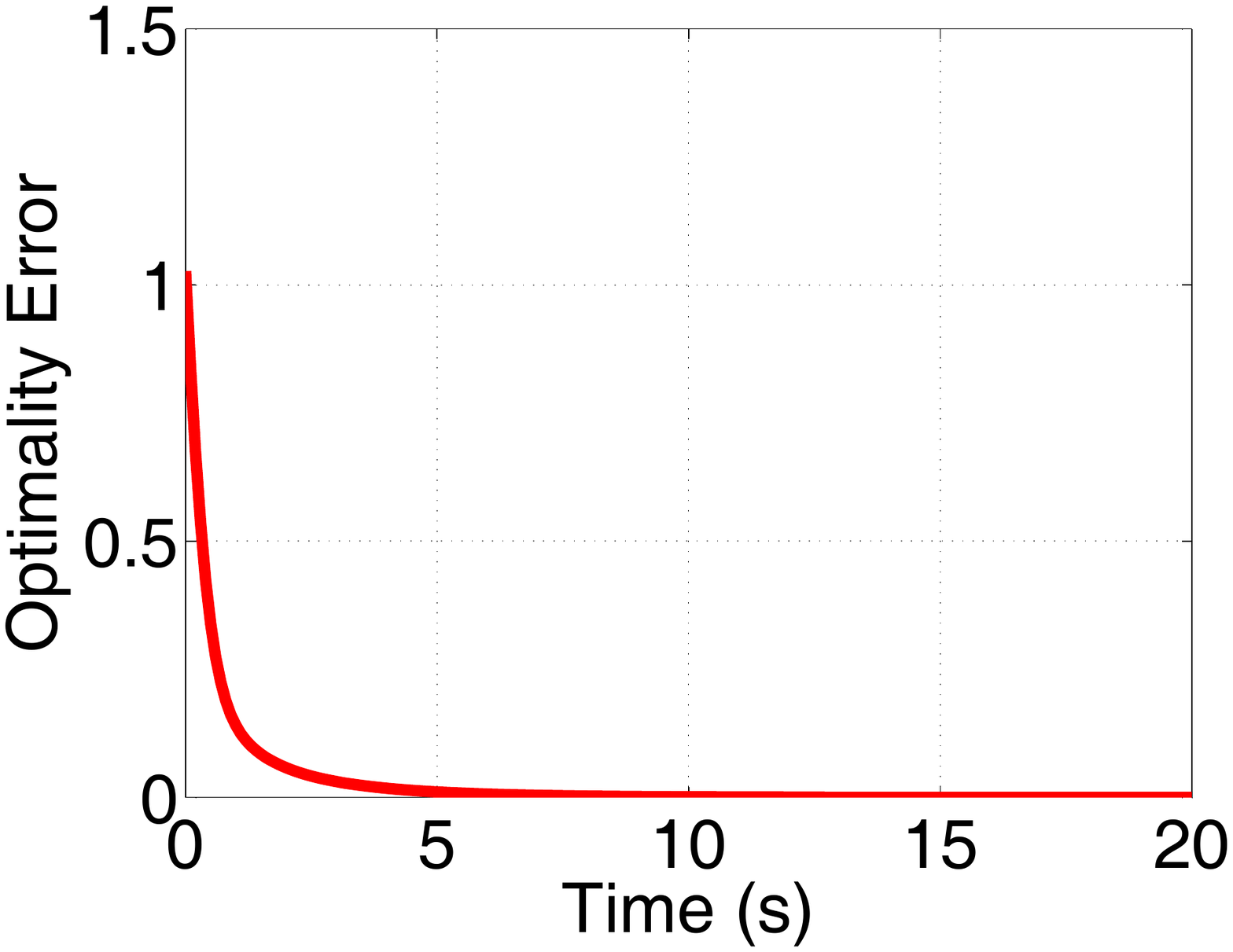}}\\
  \caption{Gradient control of irregular placements in $\mathbb{R}^3$.}
  \label{fig_simulation_bearing_3D_irregular}
\end{figure}

\begin{proposition}\label{theorem_gradientcontrol}
For any initial condition $r(0)\in\mathbb{R}^{dn}$ with $\|r_i(0)\|\neq 0$ for all $i\in\{1,\dots,n\}$, the solution to the nonlinear $r$-dynamics \eqref{eq_closedLoopSystem} asymptotically converges to the set
\begin{align*}
    \mathcal{E}=\left\{r\in\mathcal{S}: P_iGg_i=0, \quad i=1,\dots,n \right\},
\end{align*}
where $\mathcal{S}=\{r\in\mathbb{R}^{dn}: \|r_i\|=\|r_i(0)\|, \  i=1,\dots,n\}$.
\end{proposition}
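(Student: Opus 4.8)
The plan is to invoke LaSalle's invariance principle, for which the essential preliminary step is to trap every trajectory inside a compact positively invariant set. First I would show that each range $\|r_i\|$ is conserved along solutions of \eqref{eq_closedLoopSystem}. Since $P_i g_i = (I_d - g_i g_i^T) g_i = 0$, we obtain
\begin{align*}
    r_i^T \dot{r}_i = -\|r_i\|\, g_i^T P_i G g_i = 0,
\end{align*}
so that $\frac{d}{dt}\|r_i\|^2 = 2 r_i^T \dot{r}_i = 0$ for each $i$. Hence $\|r_i(t)\| = \|r_i(0)\|$ for all $t$, which means the set $\mathcal{S}$ is positively invariant under the flow.

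Next I would note that $\mathcal{S}$ is compact: it is the Cartesian product of the spheres $\{r_i : \|r_i\| = \|r_i(0)\|\}$, each of strictly positive radius because $\|r_i(0)\| \neq 0$. On $\mathcal{S}$ the orientation $g_i = r_i/\|r_i\|$ is therefore well-defined and smooth, so the right-hand side of \eqref{eq_closedLoopSystem} is smooth and the solution exists, stays bounded, and remains in $\mathcal{S}$ for all $t \ge 0$. The Lyapunov function $V(r) = \frac14(\|G\|^2 - \beta)$ is smooth on $\mathcal{S}$ and bounded below, since $\beta$ is by definition the lower bound of $\|G\|^2$ furnished by Theorems \ref{theorem_regularOptimalPlacement} and \ref{theorem_irregularOptimalPlacement}.

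With these ingredients in place, LaSalle's invariance principle applies on the compact positively invariant set $\mathcal{S}$: every solution converges to the largest invariant set $M$ contained in $\{r \in \mathcal{S} : \dot{V}(r) = 0\}$. The computation recorded just before the statement gives $\dot{V} = -\sum_{i=1}^n \frac{c_i^2}{\|r_i\|}\|P_i G g_i\|^2$, and since every weight $c_i^2/\|r_i\|$ is strictly positive, the sum vanishes if and only if $P_i G g_i = 0$ for all $i$. Thus $\{r \in \mathcal{S} : \dot{V}(r) = 0\} = \mathcal{E}$, and because $M \subseteq \mathcal{E}$, convergence to $M$ forces the distance from the trajectory to $\mathcal{E}$ to tend to zero, which is exactly the asserted convergence.

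The main obstacle is not the invariance argument itself but securing the compact invariant set on which LaSalle can be invoked, and this hinges entirely on the conservation of $\|r_i\|$, which in turn follows from the projection structure $\dot{r}_i = -P_i G g_i \perp r_i$. I would also flag the point that the vector field must stay smooth along the flow, i.e., no range may collapse to zero; but this is immediate from the conservation law, so the trajectory can never leave the region where each $g_i$ is defined.
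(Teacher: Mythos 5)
Your proof is correct and follows essentially the same route as the paper: conservation of each $\|r_i\|$ via the projection identity $g_i^T P_i = 0$, compactness and positive invariance of $\mathcal{S}$, and LaSalle's invariance principle applied to $V$ with $\{r\in\mathcal{S}:\dot{V}(r)=0\}=\mathcal{E}$. The only cosmetic difference is that the paper observes $\dot{r}=0$ on $\mathcal{E}$ (so $\mathcal{E}$ itself is the largest invariant set), whereas you use the inclusion $M\subseteq\mathcal{E}$, which suffices equally well.
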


\begin{proof}
The time derivative of $\|r_i\|$ is
\begin{align}\label{eq_lengthinvariant}
    \frac{\mathrm{d} \|r_i\|}{\mathrm{d} t}=\frac{r_i^T}{\|r_i\|}\dot{r}_i=-g_i^TP_iGg_i=0.
\end{align}
The last equality uses the fact $g_i^TP_i=0$.
By \eqref{eq_lengthinvariant} we have $\|r_i(t)\|\equiv\|r_i(0)\|\ne0$. Hence $\mathcal{S}$ is a positive invariant set with respect to the $r$-dynamics.
The set $\mathcal{S}$ consists of a group of spheres in $\mathbb{R}^d$ centered at the origin.
Thus $\mathcal{S}$ is compact.
Note $\dot{V}=0$ and $\dot{r}=0$ for all points in $\mathcal{E}$.
By the invariance principle \cite{Khalilbook}, every solution starting in $\mathcal{S}$ asymptotically converges to $\mathcal{E}$.
\end{proof}

By Proposition \ref{theorem_gradientcontrol}, the $r$-dynamics converge either to the optimal placement $\mathcal{E}_0$ or the set $\mathcal{E}\setminus\mathcal{E}_0$.
By introducing Lagrange multipliers $\gamma_i$, $i=1,\dots,n$, the constrained optimization problem \eqref{eq_problemdefinition2} is equivalent to minimizing  the Lagrangian function ${L}=\|G\|^2+\sum_{i=1}^n\gamma_i(g_i^Tg_i-1)$.
From $\partial {L} / \partial g_i =0$, a short calculation shows that $\mathcal{E}$ is the \emph{critical point} set,
which consists of not only minimizers of $\|G\|^2$ (i.e., optimal placements) but also saddle points and maximizers of $\|G\|^2$ (i.e., non-optimal placements).
The sets $\mathcal{E}_0$ and $\mathcal{E}$ are equilibrium manifolds.
It is noticed that nonlinear stabilization problems involving equilibrium manifolds also emerge in formation control area recently \cite{Francis2009,Francis2010,Anderson2011}.
It is possible to conduct strict stability analysis including identifying the attractive region of $\mathcal{E}_0$ by using Center manifold theory \cite{Francis2009,Anderson2011} or differential geometry \cite{Francis2010}.
But that will be non-trivial because the geometric structure of $\mathcal{E}_0$ is extremely complicated as shown in \cite{Strawn2006}.

Figure~\ref{fig_simulation_bearing_3D_regular} and Figure~\ref{fig_simulation_bearing_3D_irregular} show a number of optimal placements obtained by the proposed gradient control law.
Due to space limitations, we only show examples in 3D.
The three final converged placements in Figure~\ref{fig_simulation_bearing_3D_regular} are actually the three regular optimal ones shown in Figure~\ref{fig_uniqueplacement_3D}.
The three final placements in Figure~\ref{fig_simulation_bearing_3D_irregular} are the two as illustrated in Figure \ref{fig_irregularPlacements} (b) and (c).
Clearly the numerical results are consistent with our previous analytical analysis.
The \emph{optimality error} refers to the difference between $\|G\|^2$ and its lower bound given in \eqref{eq_lowerboundRegular} or \eqref{eq_lowerboundIrregular}.
The optimality error can be used as a numerical indicator to evaluate the optimality of a placement.
As shown in Figure~\ref{fig_simulation_bearing_3D_regular} and Figure~\ref{fig_simulation_bearing_3D_irregular}, the optimality errors all converge to zero.

\subsection{Simulation}

More simulations will be presented to demonstrate the performance of the gradient control. Our previous analytical characterization of optimal sensor placements will be verified by the simulations.

\subsubsection{Scenario 1 for bearing-only or RSS-based sensors}
For bearing-only or RSS-based sensors, the sensor-target ranges are assumed to be fixed.
The control law \eqref{eq_closedLoopSystem} naturally fulfills this assumption.
Examples of 2D and 3D optimal placements achieved by the gradient control are shown in Figure~\ref{fig_simulation_bearing_2D} and Figure~\ref{fig_simulation_bearing_3D}, respectively.
The noise covariances of all sensors are chosen to be identical in the simulation.

Figure~\ref{fig_simulation_bearing_2D} shows 2D examples.
In (a) or (e), the final angle subtended at the target by the two sensors is 90 degrees.
In (b) and (c), the two optimal placements are equivalent.
In (d), the optimal placement is a distributed construction by two optimal two-sensor sub-placements. The optimal placements in (f), (g), and (h) have non-equal sensor-target ranges. The placement in (g) is irregular.
The sensor with shortest sensor-target range drives the other two sensors to be collinear with the target.

Figure~\ref{fig_simulation_bearing_3D} shows 3D examples.
In (a) or (e), the final angle subtended at the target by any two sensors is 90 degrees. The final placement in (b) forms a tetrahedron. The final placements in (c) and (d) are both equivalent to the one in (b). An optimal placement with irregularity as $k_0=1$ is shown in (f). The sensor with shortest sensor-target range drives the other three sensors to an orthogonal plane, in which the three sensors form a 2D optimal placement like the one in Figure~\ref{fig_simulation_bearing_2D}~(c). An optimal placement with irregularity as $k_0=2$ is shown in (g). These sensors are mutually orthogonal except the two with long sensor-target ranges collinear. A complicated optimal placement with ten sensors is given in (h).

These simulation results are consistent with our previous optimality analysis. It is also observed that sensors can converge to an optimal placement which is close to their initial positions.

\begin{figure*}
  \centering
  \subfloat[$n=2$, $k_0=0$]{\label{subfiga}\includegraphics[width=0.23\linewidth]{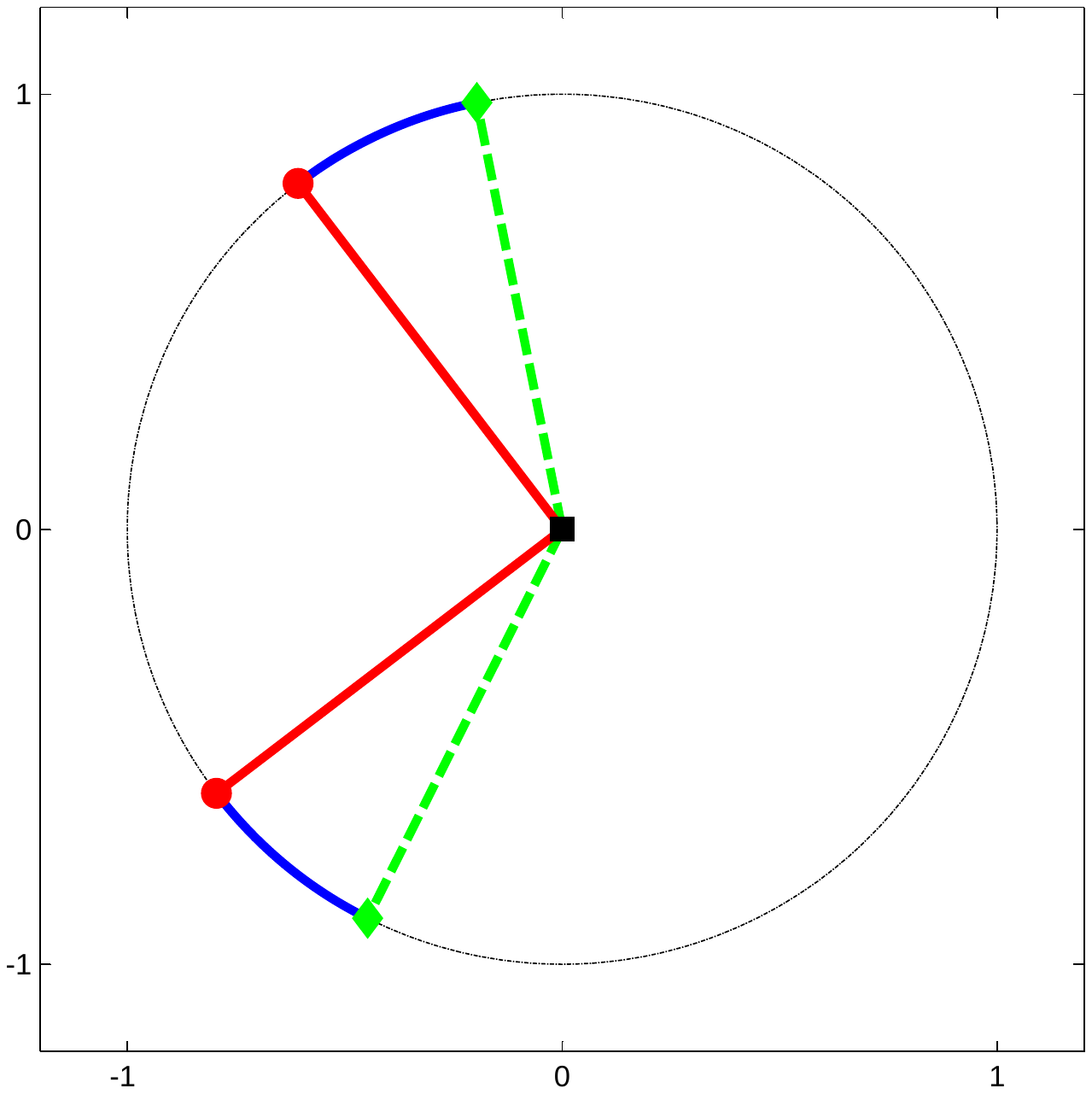}} \ \
  \subfloat[$n=3$, $k_0=0$]{\label{subfigb}\includegraphics[width=0.23\linewidth]{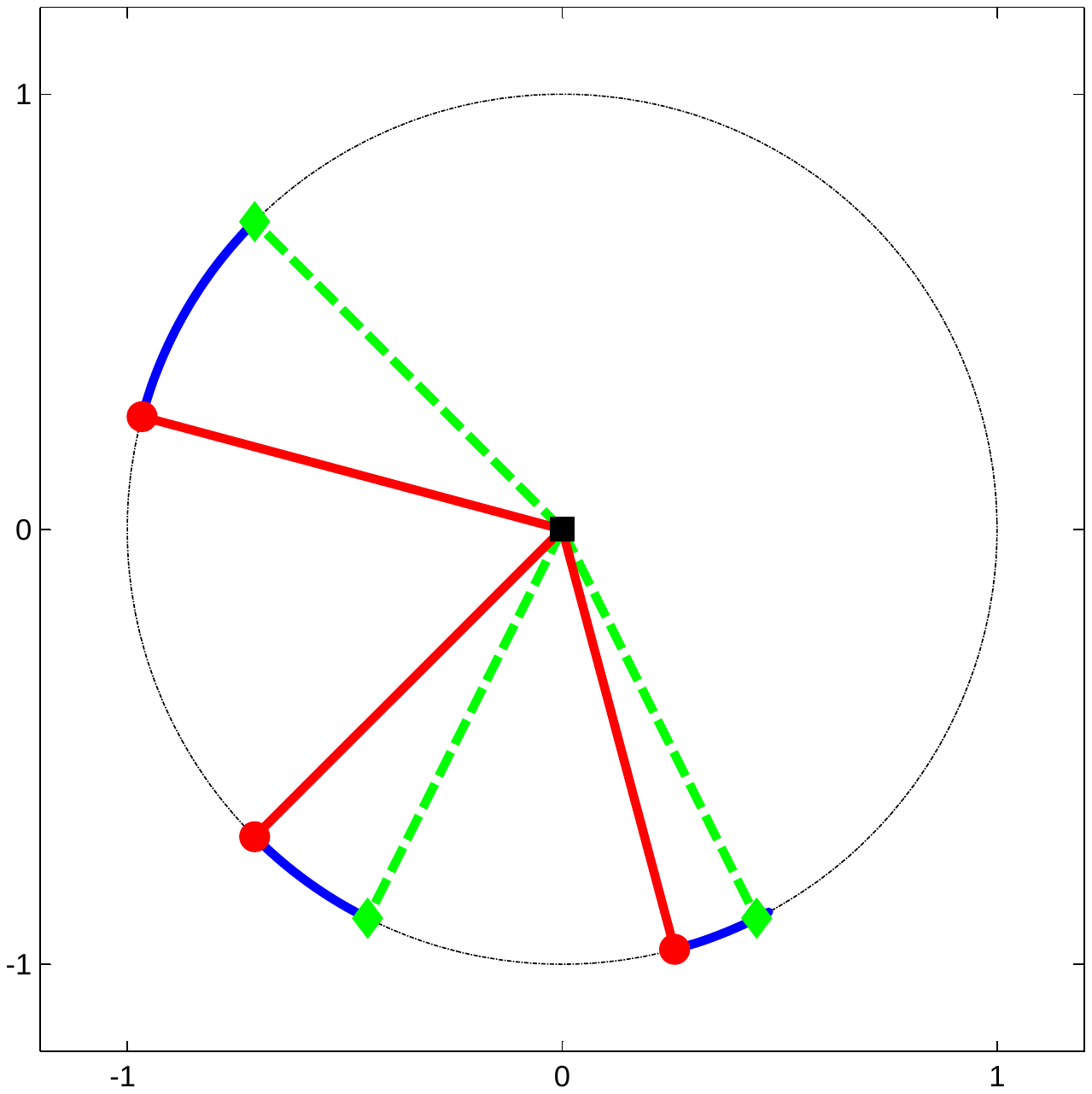}} \ \
  \subfloat[$n=3$, $k_0=0$]{\label{subfigc}\includegraphics[width=0.23\linewidth]{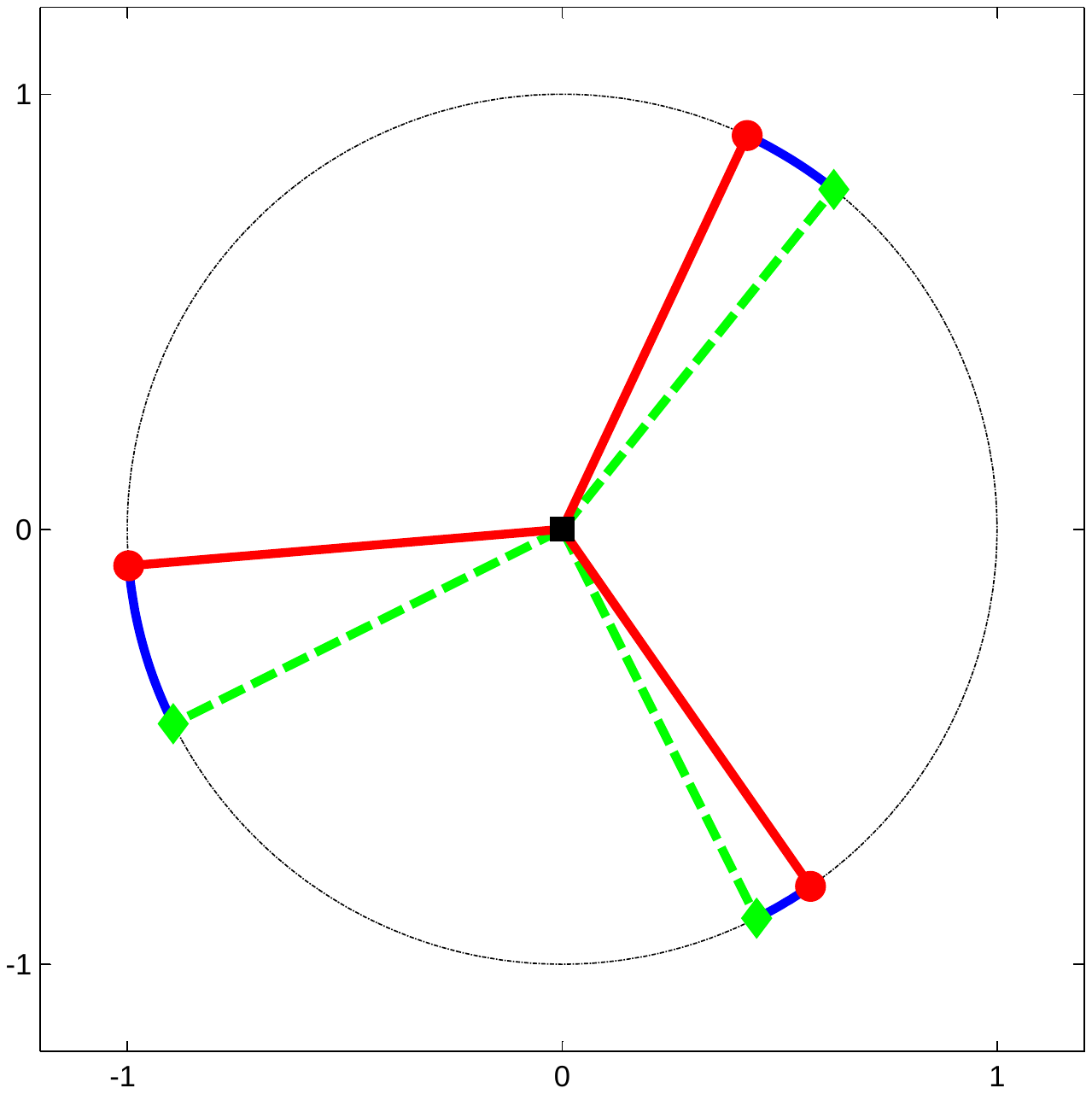}} \ \
  \subfloat[$n=4$, $k_0=0$]{\label{subfigd}\includegraphics[width=0.23\linewidth]{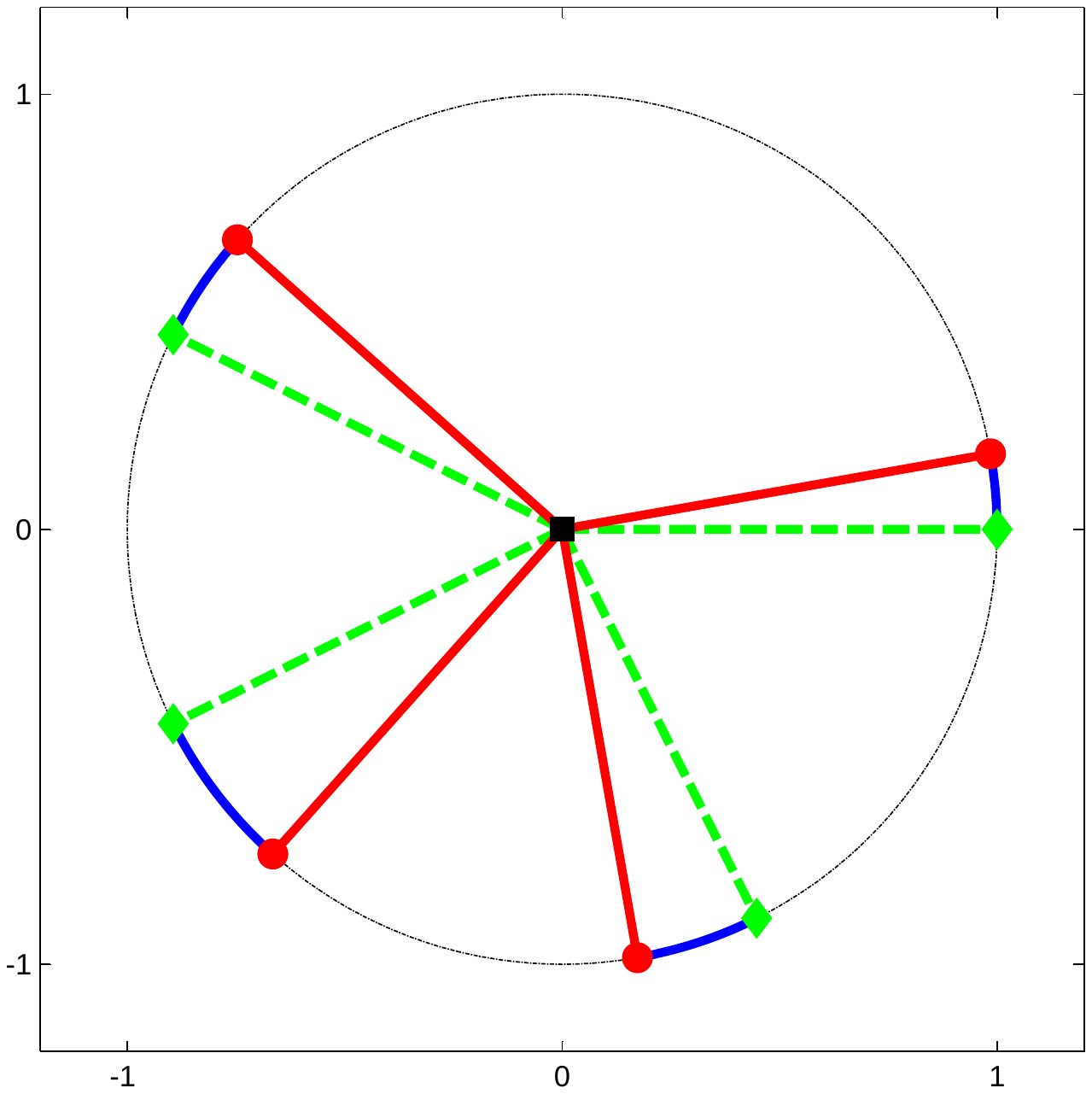}} \\
  \subfloat[$n=2$, $k_0=1$]{\label{subfiga}\includegraphics[width=0.23\linewidth]{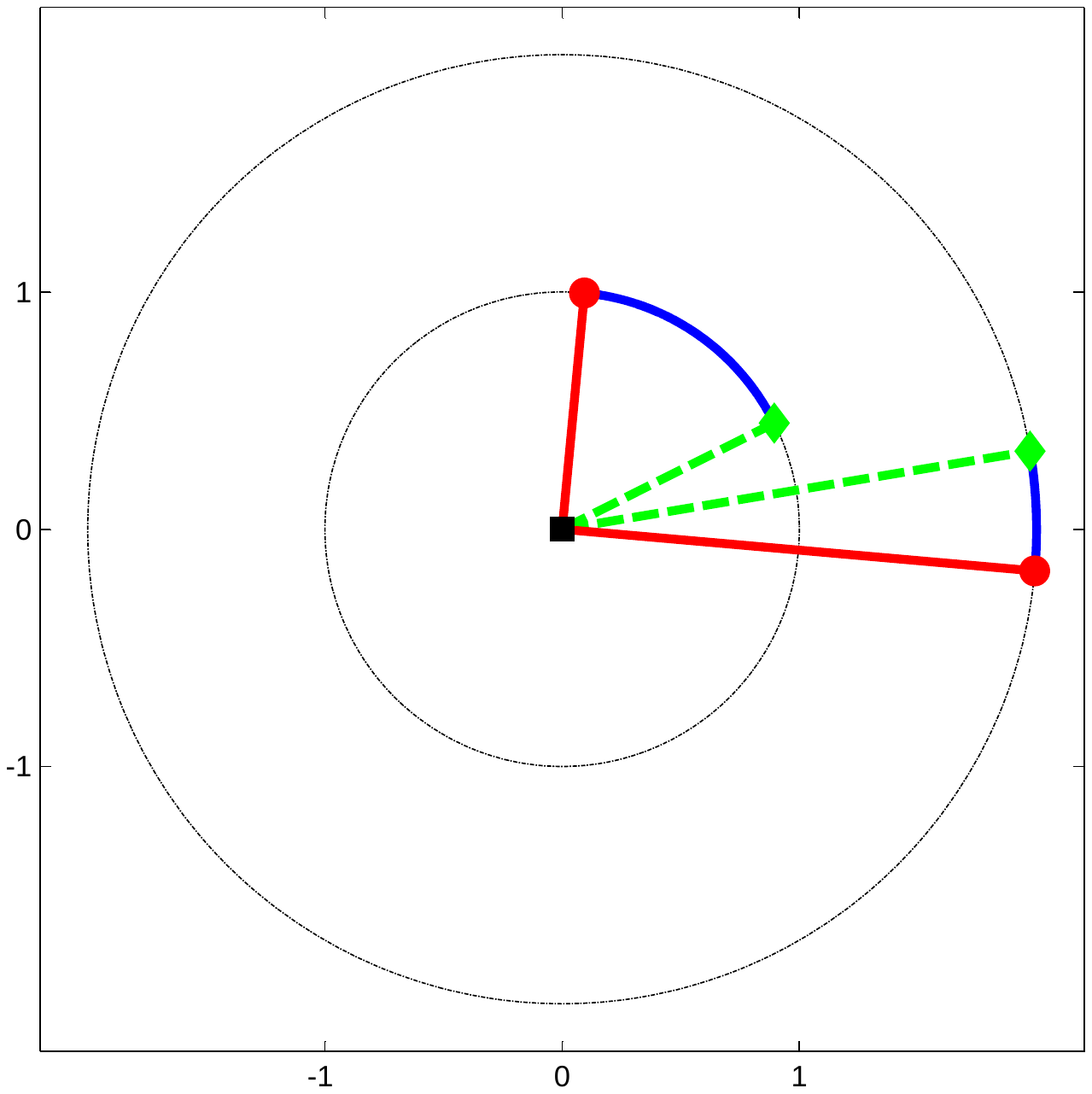}} \ \
  \subfloat[$n=3$, $k_0=0$]{\label{subfigb}\includegraphics[width=0.23\linewidth]{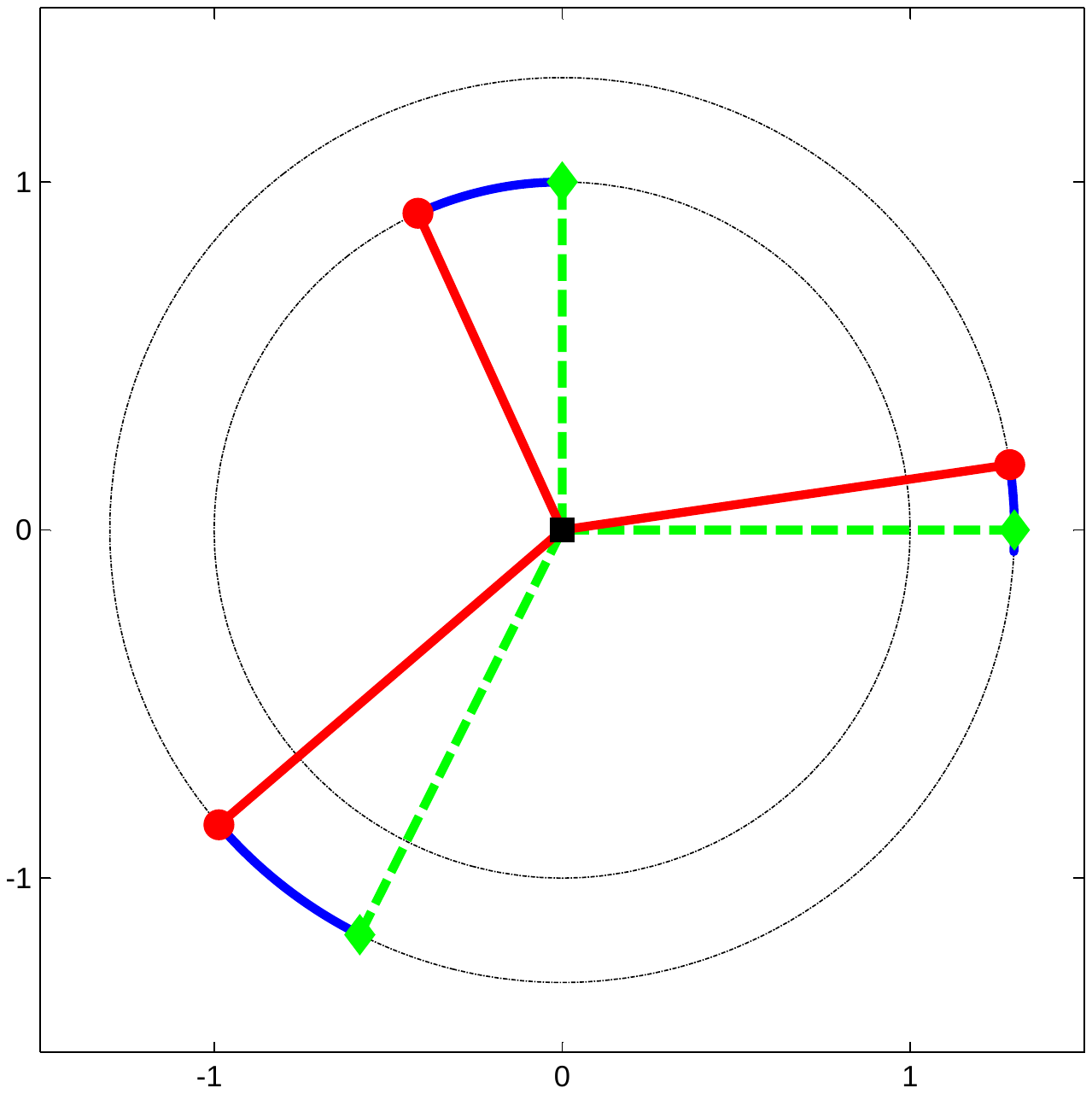}} \ \
  \subfloat[$n=3$, $k_0=1$]{\label{subfigc}\includegraphics[width=0.23\linewidth]{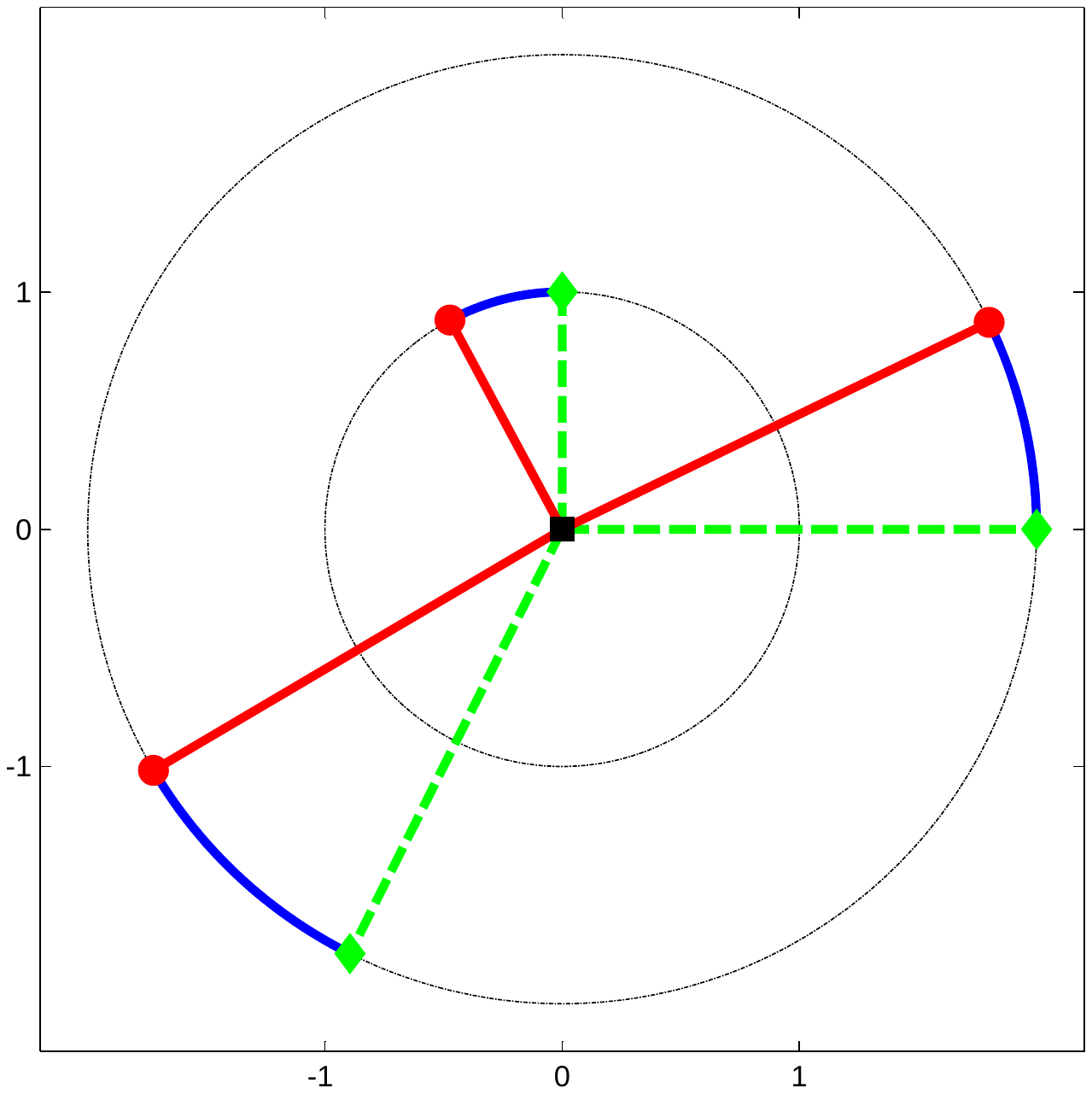}} \ \
  \subfloat[$n=6$, $k_0=0$]{\label{subfigd}\includegraphics[width=0.23\linewidth]{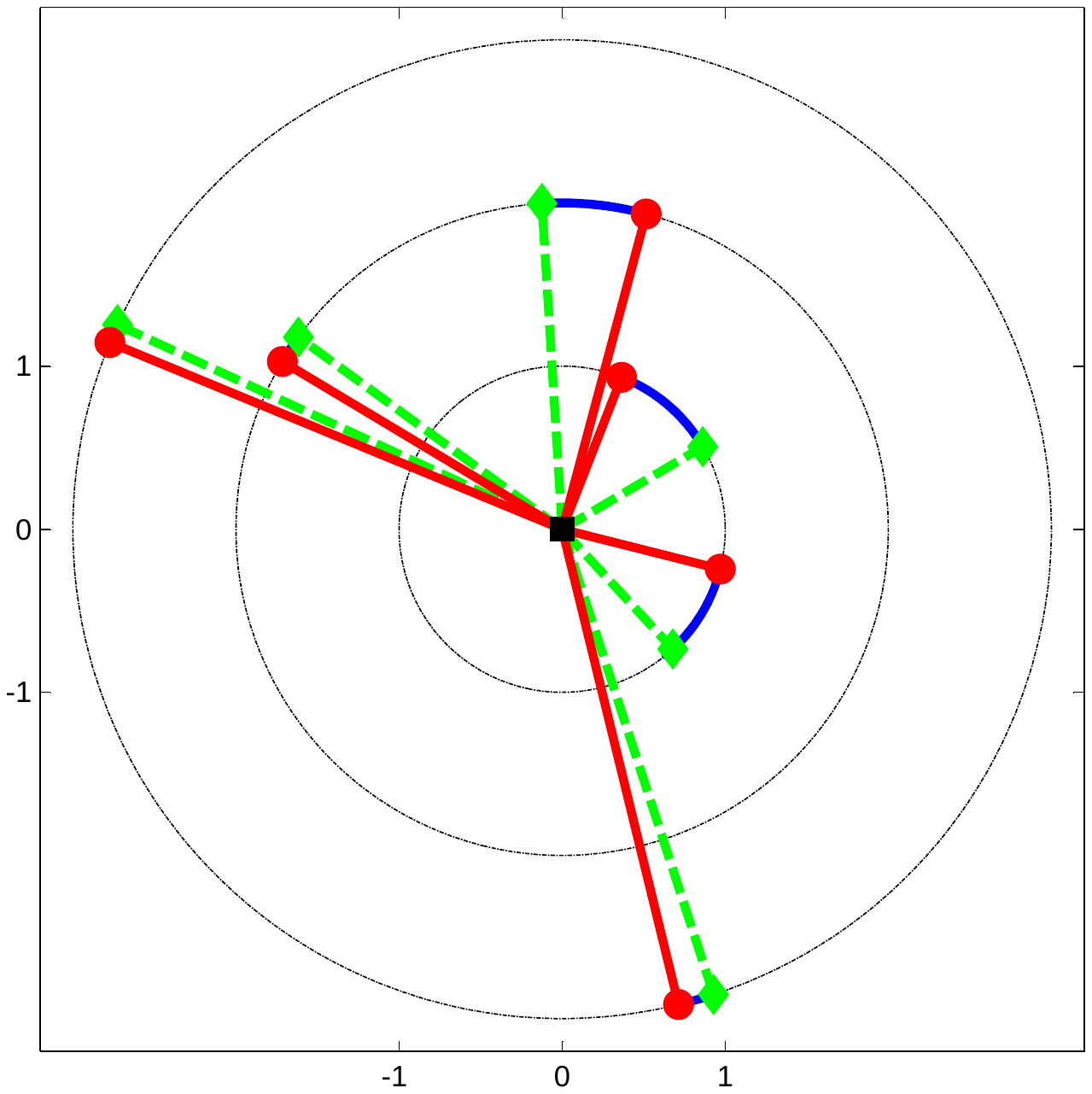}}
  \caption{Examples of 2D optimal placements achieved by the gradient control law. $k_0$: irregularity;
  green diamonds: initial sensor positions; red dots: final sensor positions; blue curves: sensor trajectories; black square: target.}
  \label{fig_simulation_bearing_2D}
\end{figure*}
\begin{figure*}
  \centering
  \subfloat[$n=3$, $k_0=0$]{\label{subfiga}\includegraphics[width=0.25\linewidth]{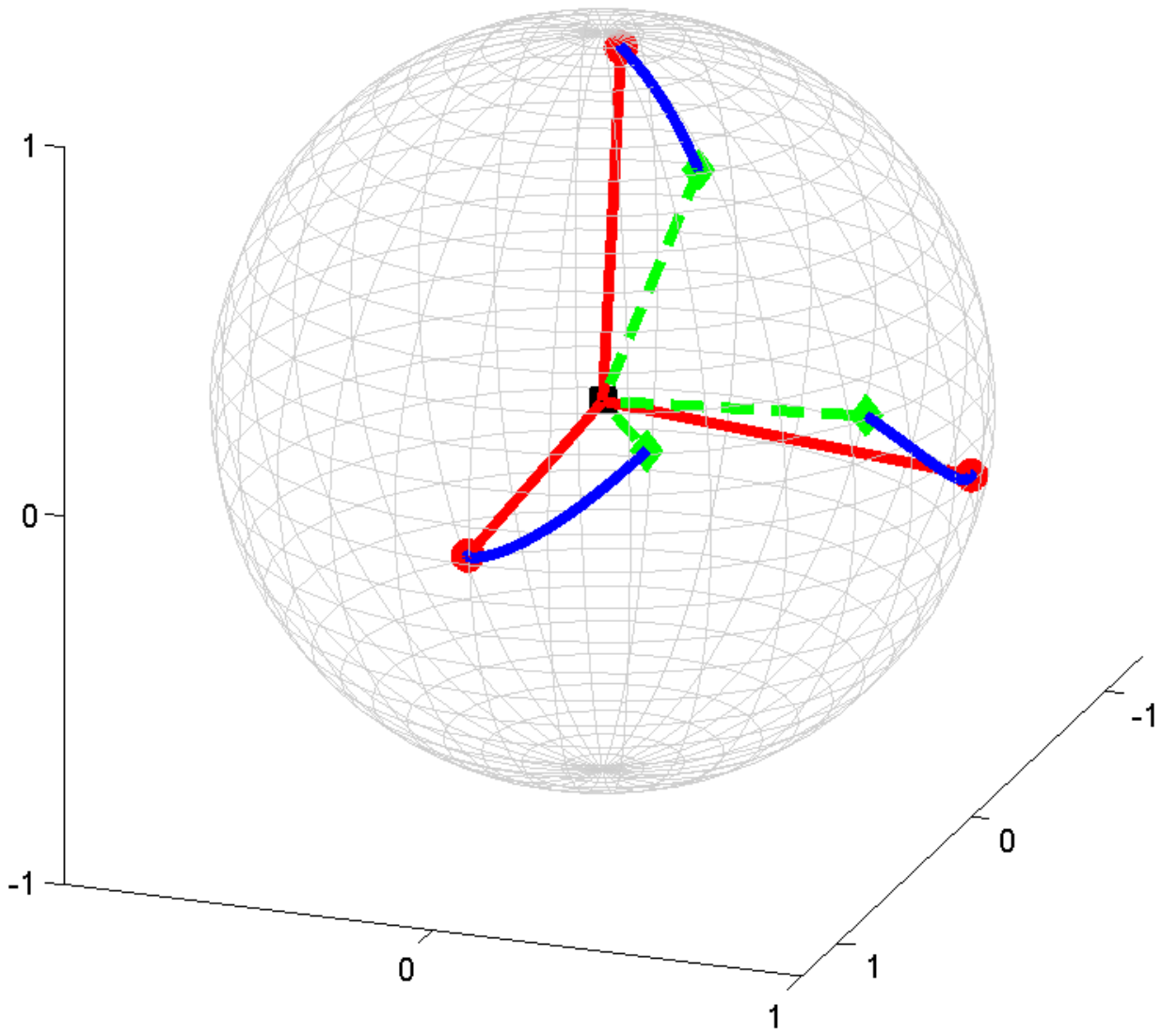}}
  \subfloat[$n=4$, $k_0=0$]{\label{subfigb}\includegraphics[width=0.25\linewidth]{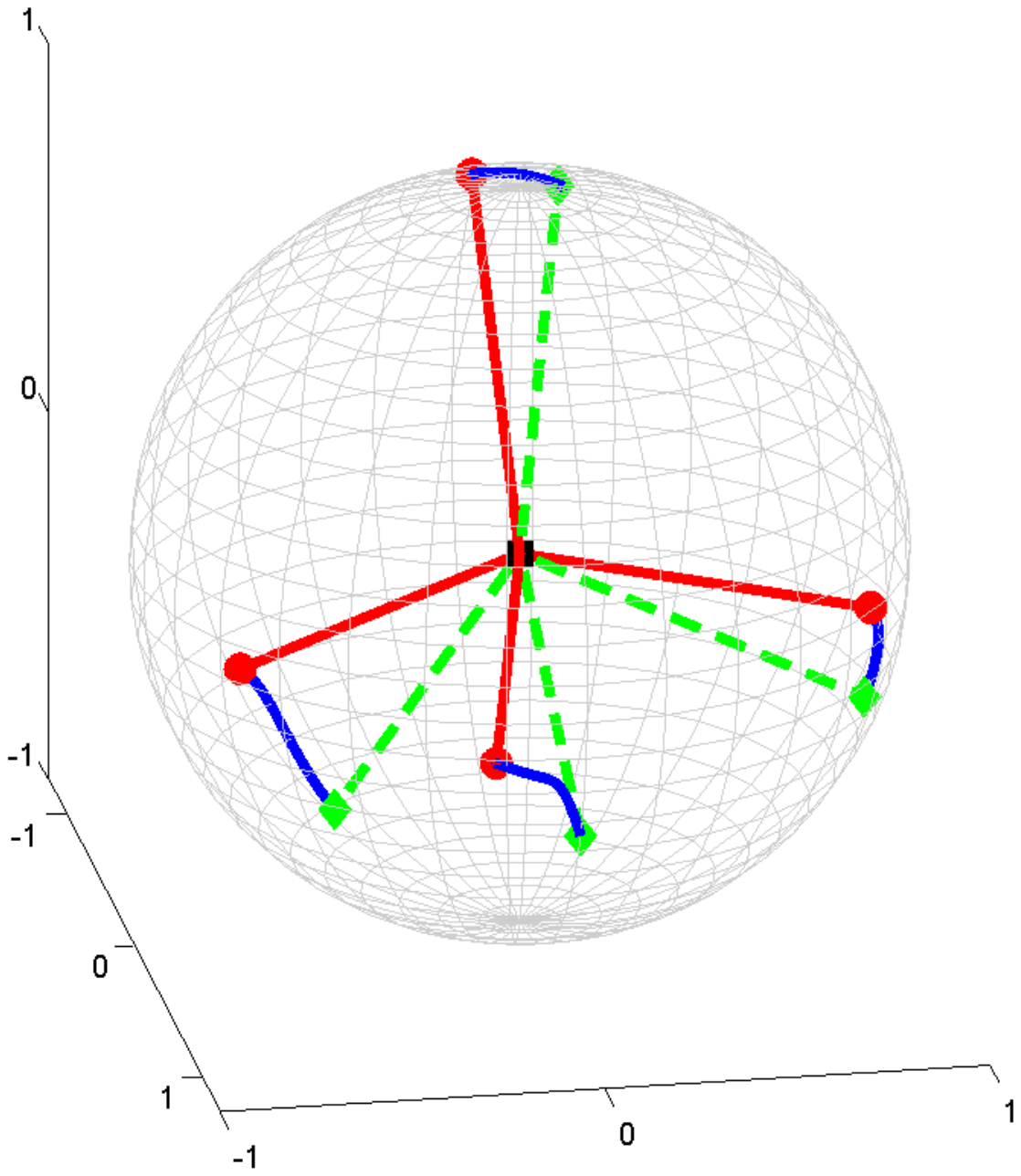}}
  \subfloat[$n=4$, $k_0=0$]{\label{subfigc}\includegraphics[width=0.25\linewidth]{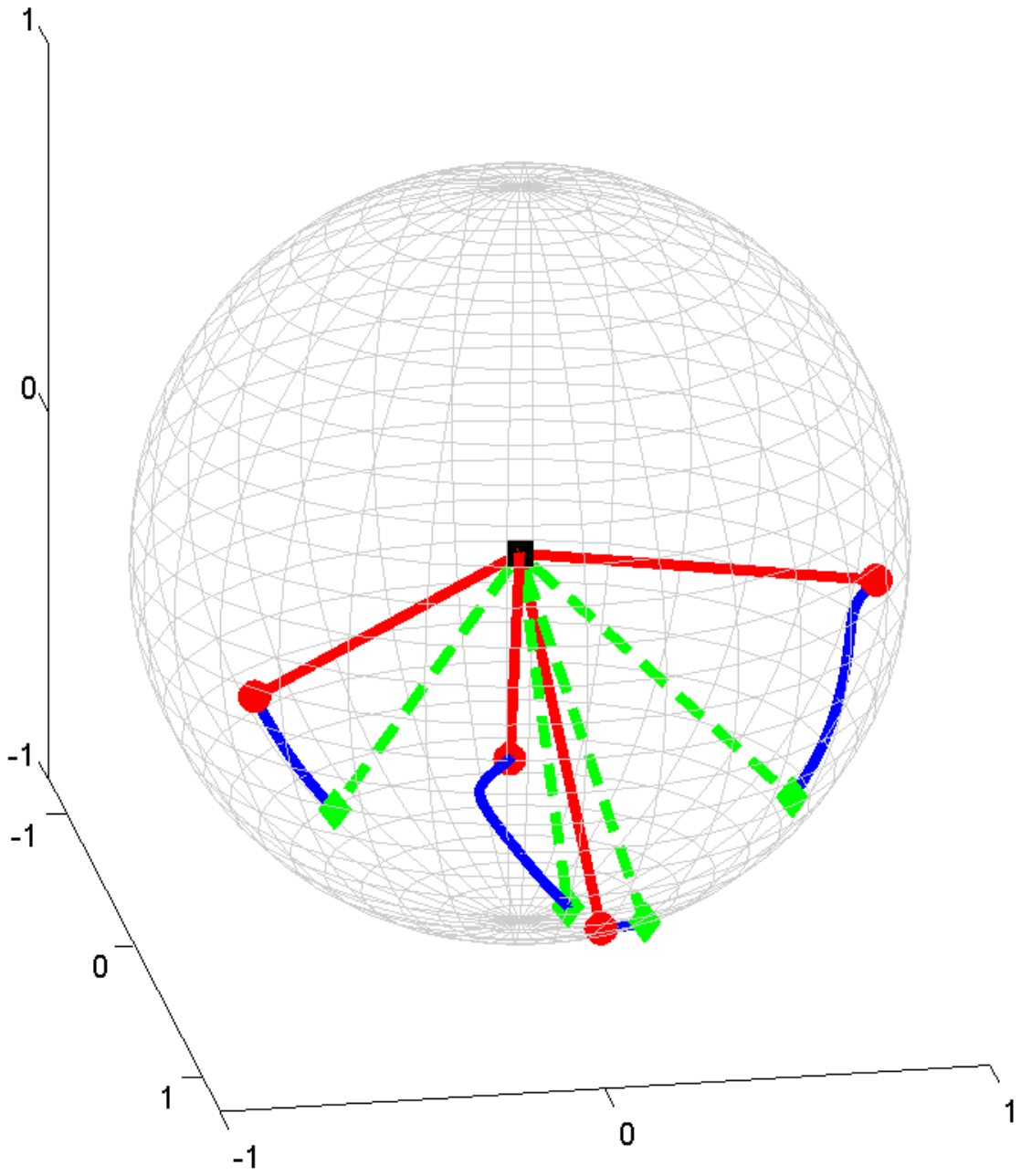}}
  \subfloat[$n=4$, $k_0=0$]{\label{subfigd}\includegraphics[width=0.25\linewidth]{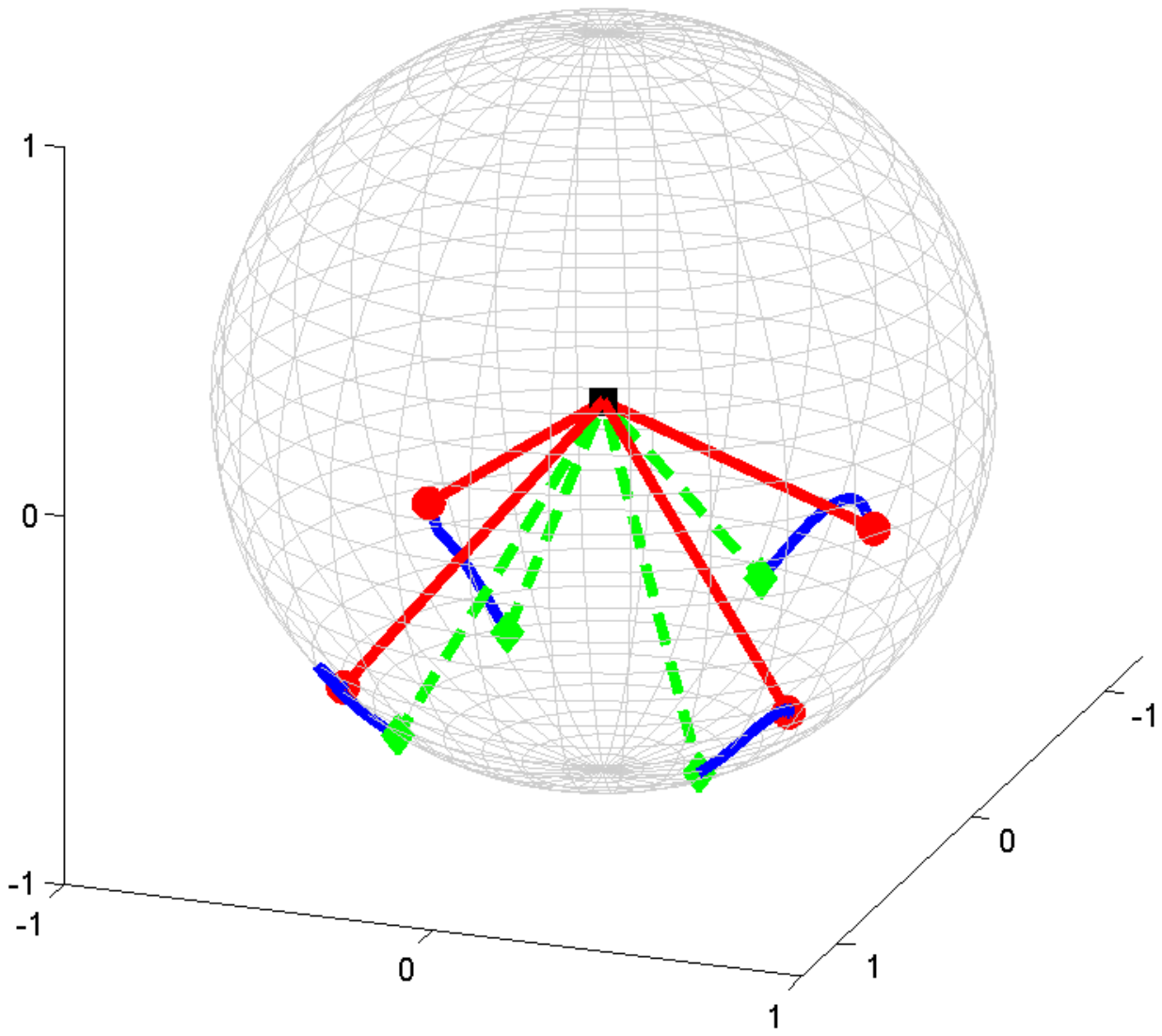}}\\
  \subfloat[$n=3$, $k_0=1$]{\label{subfigb}\includegraphics[width=0.25\linewidth]{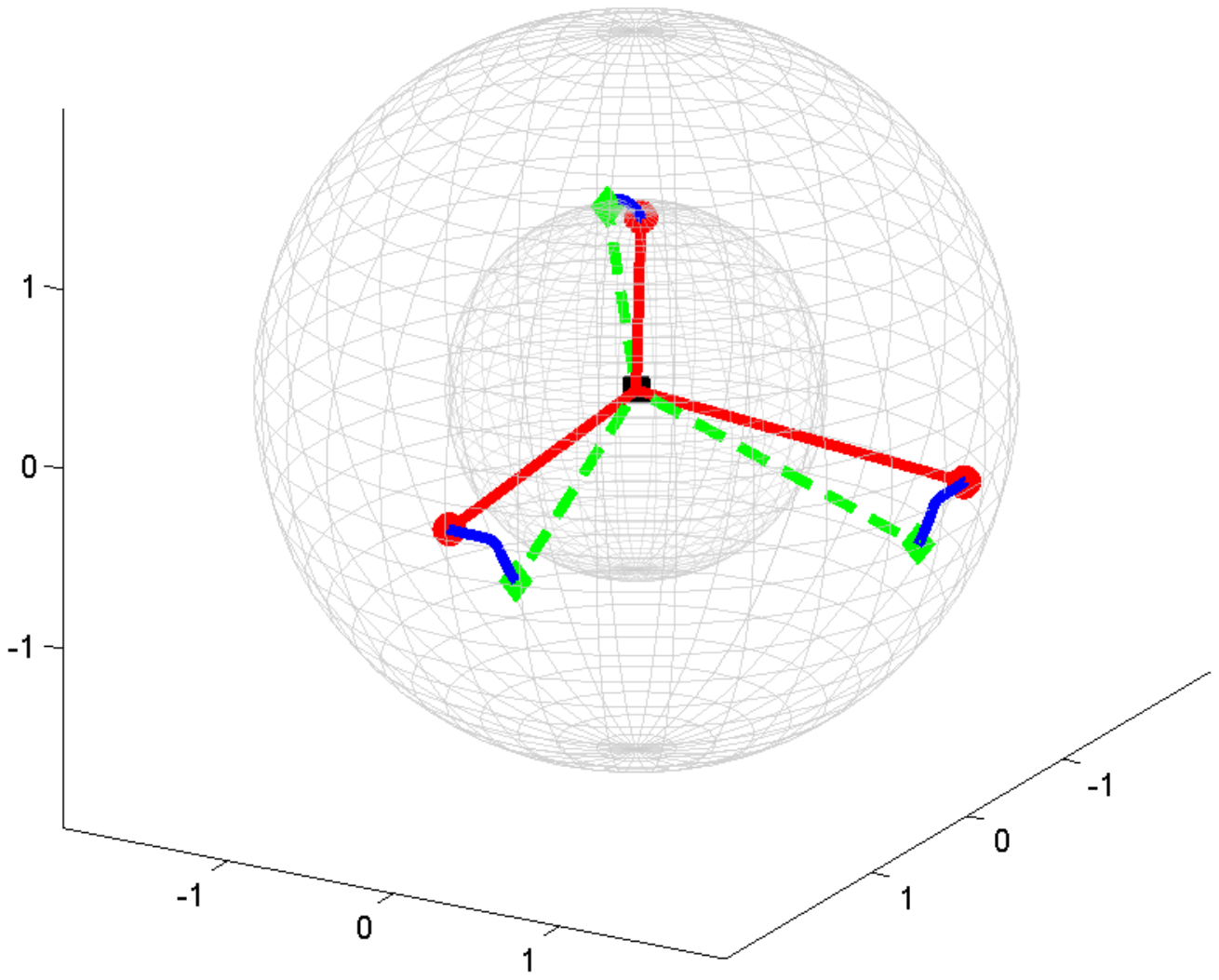}}
  \subfloat[$n=4$, $k_0=1$]{\label{subfigc}\includegraphics[width=0.25\linewidth]{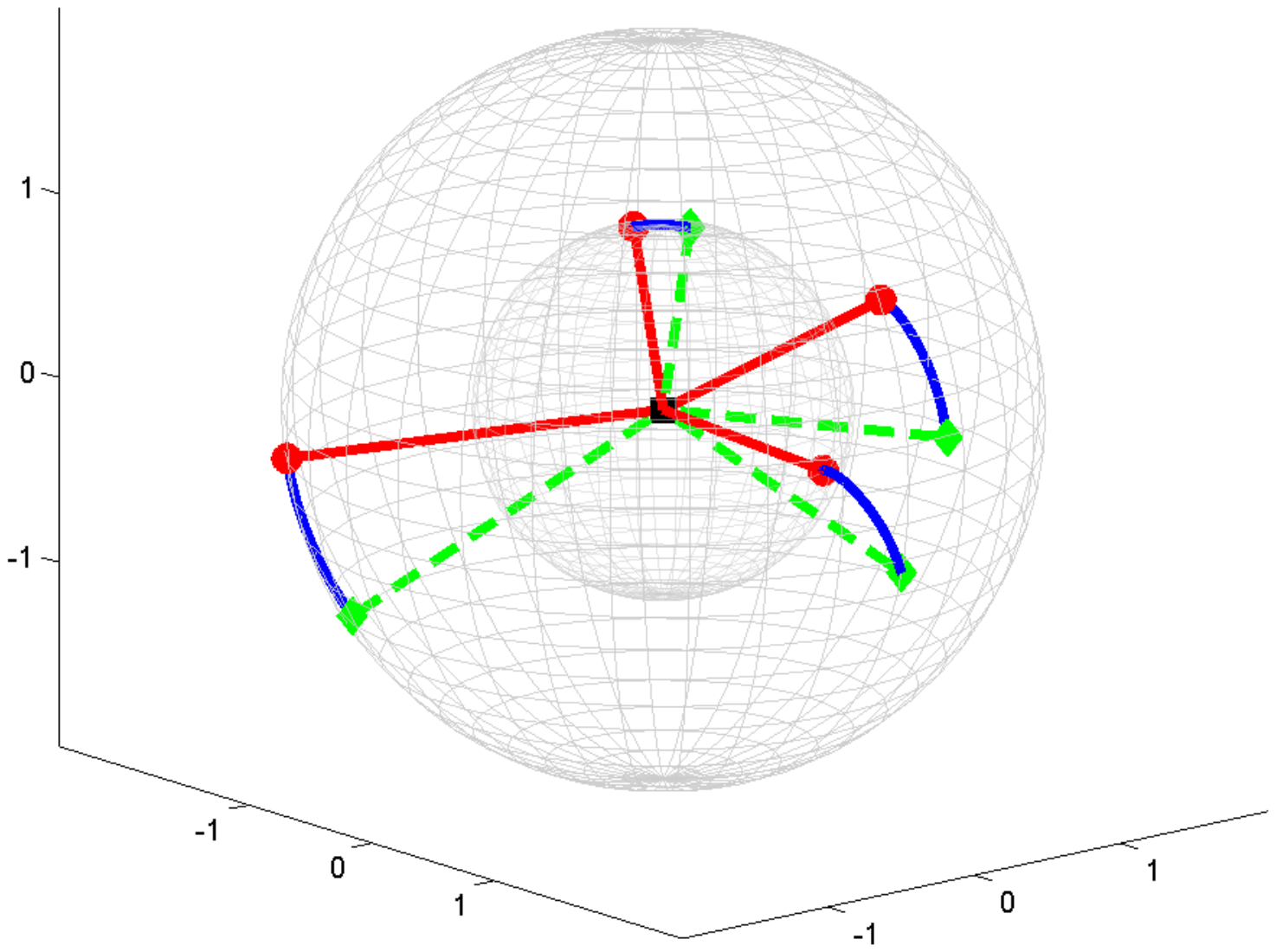}}
  \subfloat[$n=4$, $k_0=2$]{\label{subfigd}\includegraphics[width=0.25\linewidth]{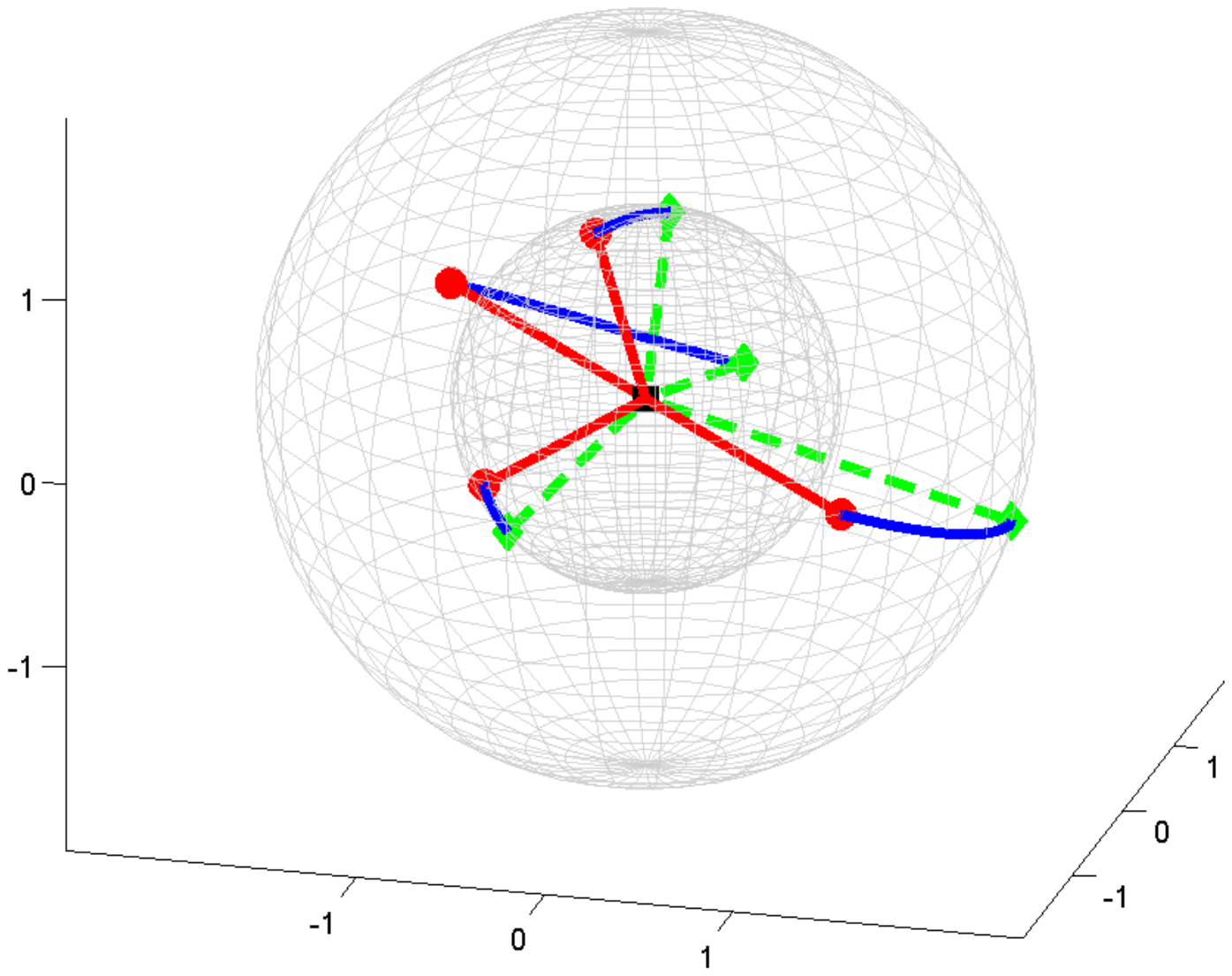}}
  \subfloat[$n=10$, $k_0=0$]{\label{subfiga}\includegraphics[width=0.25\linewidth]{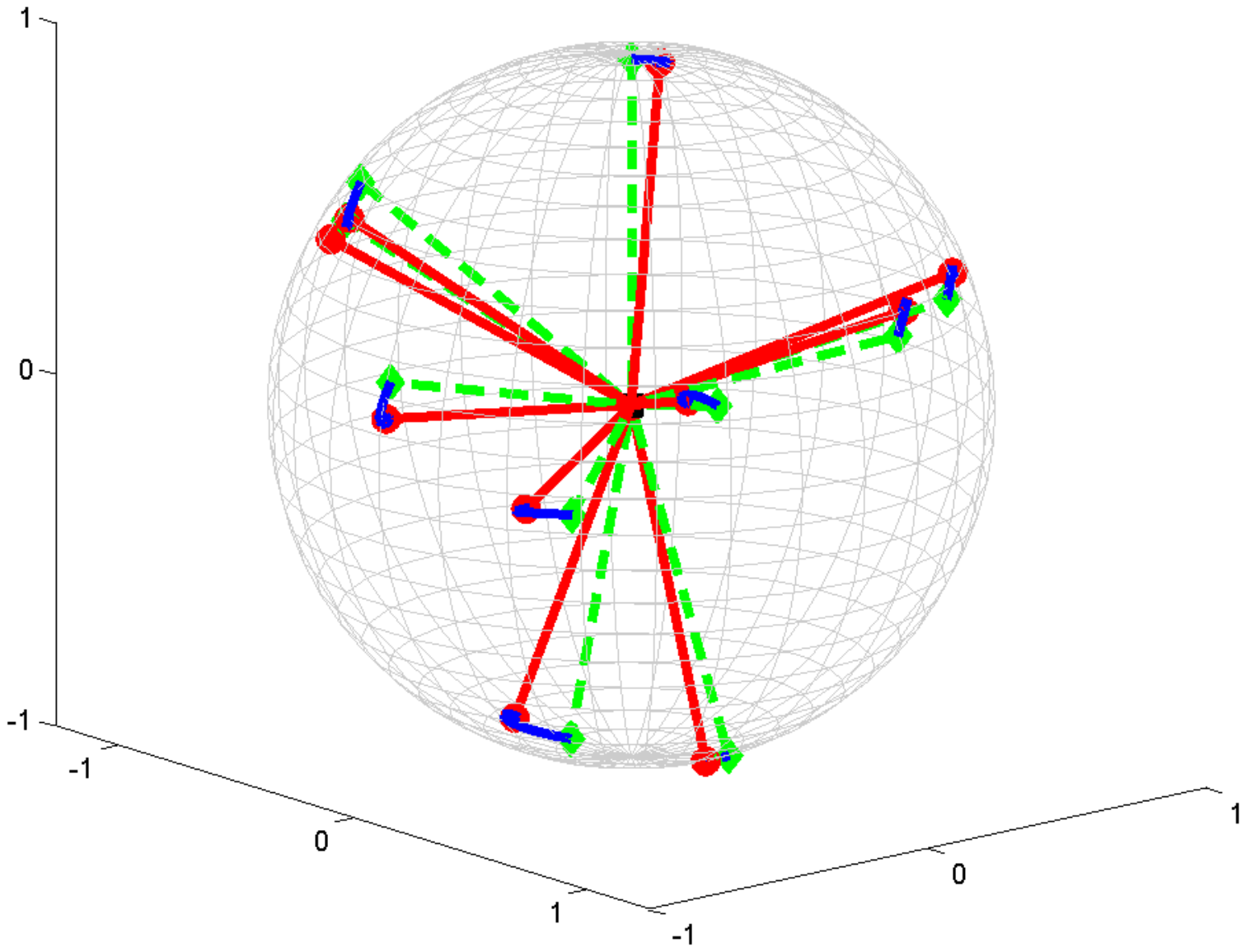}}
  \caption{Examples of 3D optimal placements achieved by the gradient control law. $k_0$: irregularity;
  green diamonds: initial sensor positions; red dots: final sensor positions; blue curves: sensor trajectories; black square: target.}
  \label{fig_simulation_bearing_3D}
\end{figure*}

\subsubsection{Scenario 2 for range-only sensors}

\begin{figure}
  \centering
  \subfloat[3D view]{\label{subfigb}\includegraphics[width=0.5\linewidth]{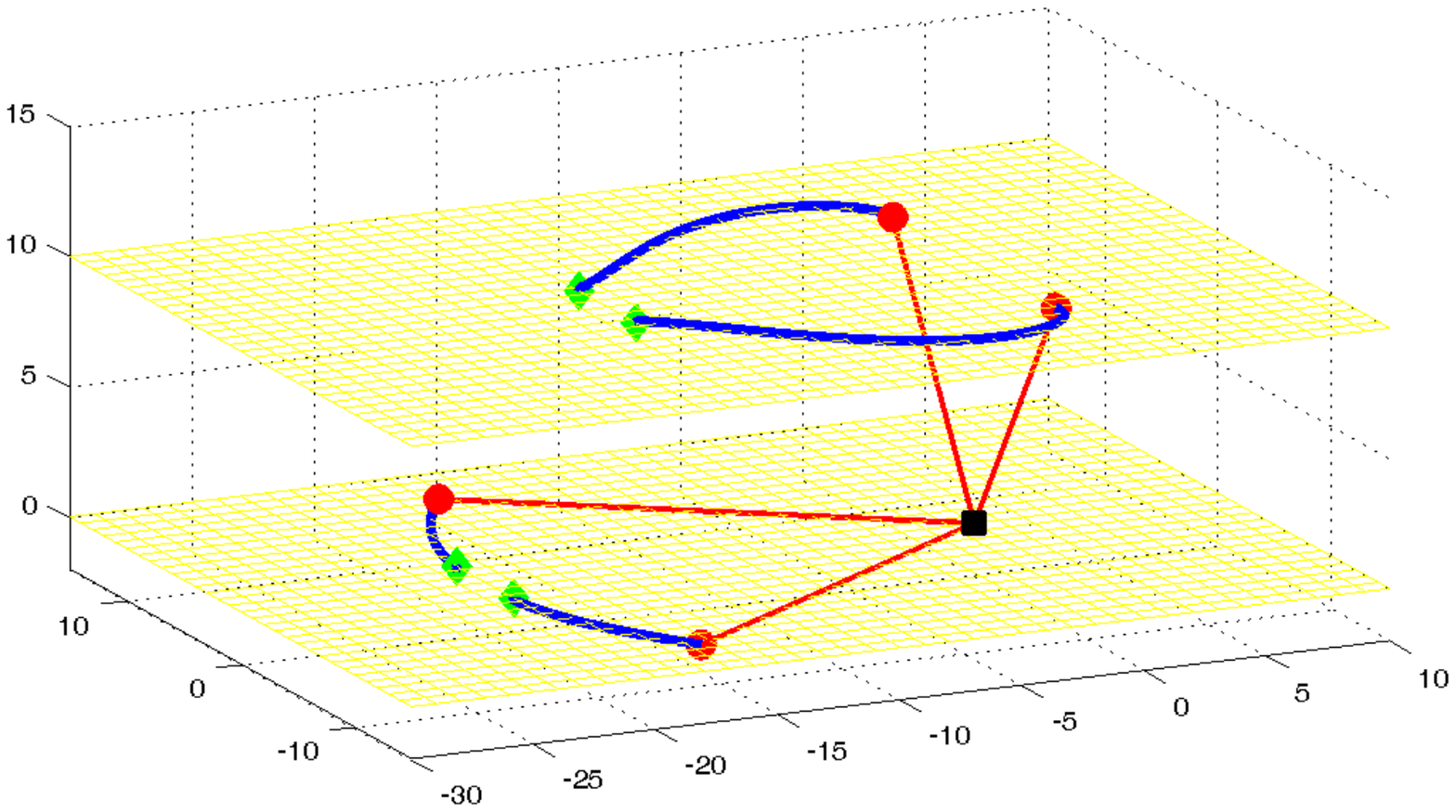}}
  \subfloat[Top view]{\label{subfiga}\includegraphics[width=0.5\linewidth]{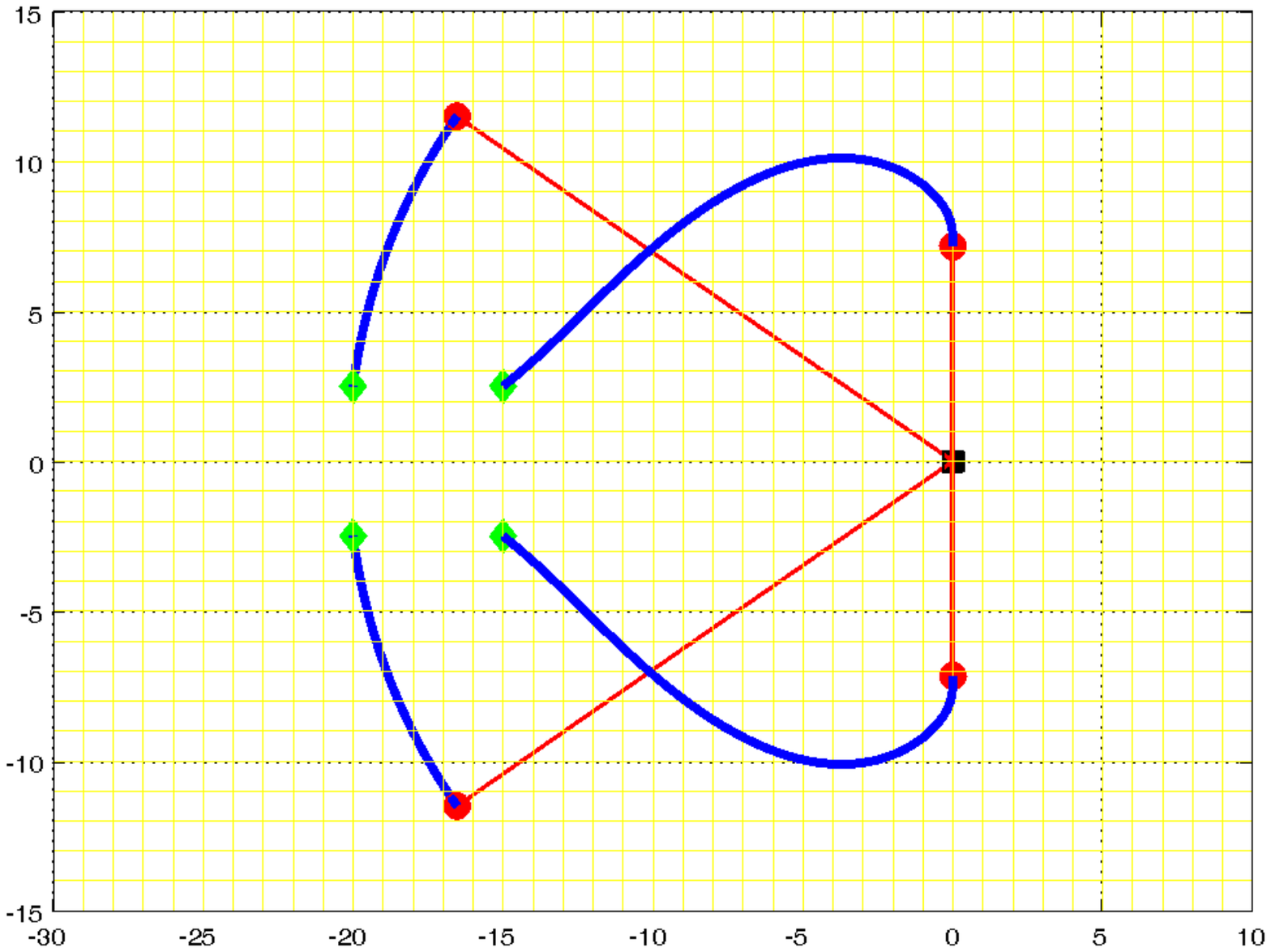}}\\
  \caption{Cooperative air and ground surveillance. Green diamonds: initial vehicle positions; red dots: final vehicle positions; blue curves: vehicle trajectories; black square: ground target.}
  \label{fig_simulation_range_only}
\end{figure}
\begin{figure}
  \centering
  \includegraphics[width=0.5\linewidth]{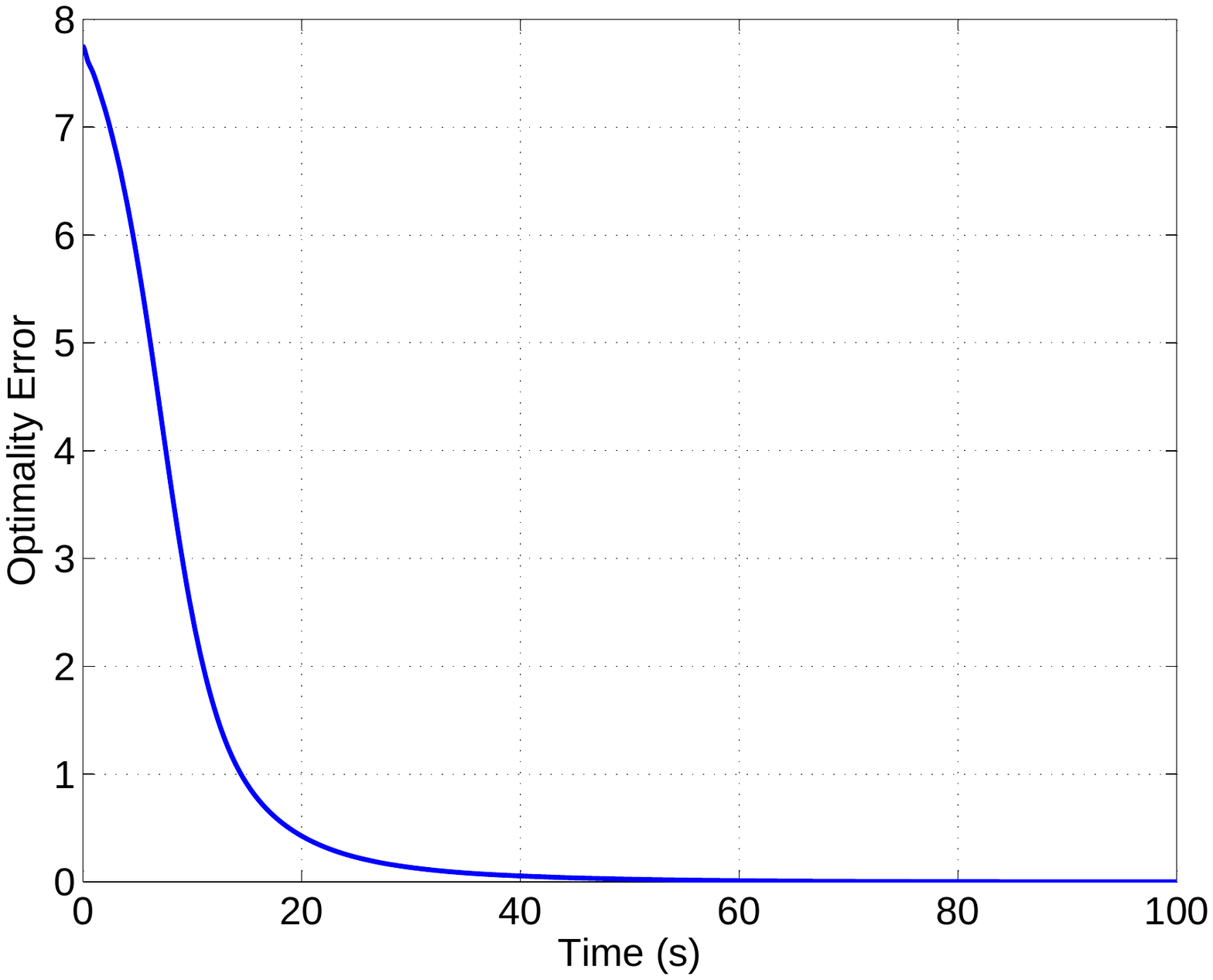}
  \caption{The error between $\|G\|^2$ and its lower bound.}
  \label{fig_simulation_range_error}
\end{figure}

For range-only sensors, the sensor-target ranges are not required to be fixed.
We consider a 3D scenario which might be applied to cooperative air and ground surveillance \cite{Ben2006}.
Consider two unmanned aerial vehicles (UAVs) and two unmanned ground vehicles (UGVs). Each vehicle carries a range-only sensor, which can measure the distance from the ground target to the vehicle. UAVs fly at a fixed altitude and UGVs move on the ground. In addition to the objective function $\|G\|^2$, we introduce the following external potential for sensor $i$:
\begin{align*}
    V_E(r_i)=\left(e_3^T r_i-\ell_i\right)^2,
\end{align*}
where $e_3=[0, 0, 1]^T$ and $\ell_i$ is the required altitude of the vehicles.
Here the external potential is used to control the sensor-target range to fulfill level-plane position constraints. Denote $V_I=\|G\|^2$ as the inter-sensor potential, and $V_E=\sum_{i=1}^n V_E(r_i)$ as the external potential of all sensors.
The total potential function is $V=V_I+V_E$. And the gradient control law becomes
\begin{align*}
    \dot{r}_i=-(\nabla_{r_i}V_I+\nabla_{r_i}V_E),
\end{align*}
where $\nabla_{r_i}$ is the gradient with respect to $r_i$.
The external potential $V_E$ should be \emph{compatible} with the inter-sensor potential $V_I$, otherwise, the external potential will affect the convergence to optimal placements. By compatible, we mean $\nabla_{r_i}V_I+\nabla_{r_i}V_E=0$ if only if $\nabla_{r_i}V_I=0$ and $\nabla_{r_i}V_E=0$.
Since $\nabla_{r_i}V_I\perp r_i$, $V_E$ is compatible with $V_I$ if there is no $r_i$ make $\nabla_{r_i}V_E\perp r_i$.
One compatible example is that all range-only sensors move on the boundary of a convex area containing the target \cite{Sonia06}.

In the simulation, we choose $\ell_1=\ell_2=10$ and $\ell_3=\ell_4=0$.
The noise variances of different sensors are the same.
Figure \ref{fig_simulation_range_only} shows the trajectories and the converged placement of the vehicles.
For complicated placements, it is usually difficult to intuitively judge their optimality.
We can evaluate the optimality by using the error between $\|G\|^2$ and its lower bound given in \eqref{eq_lowerboundIrregular} and \eqref{eq_lowerboundRegular}.
As shown in Figure \ref{fig_simulation_range_error}, the optimality error converges to zero.

\section{Conclusions}\label{section_conclusion}

The main contribution of this paper is that we present a unified way to analytically characterize optimal placements of bearing-only, range-only, or RSS sensors in 2D and 3D.
The necessary and sufficient conditions for optimal placement in 2D and 3D are proved.
The gradient control proposed in this paper is already sufficient for the purpose of numerical verification of the analytical analysis.
But it is a centralized control based on all-to-all communications and ensures local stability of the optimal placement set.
It is meaningful to study distributed or global stability guaranteed control laws or numerical algorithms in the future.


\bibliography{zsybib}
\bibliographystyle{ieeetr}

\end{document}